\newtheorem{teo}{Theorem}[section]
\newtheorem{prop}[teo]{Proposition}
\newtheorem{rem}[teo]{Remark}
\newtheorem{lemma}[teo]{Lemma}
\newcommand{\infi}{\mathcal{1}}
\def\ioo#1{\int_{\{| w_n|>{#1}\}}}
\def\elle#1{L^{#1}(\Omega)}
\def\emme#1{M^{#1}(\Omega)}
\def\lor#1#2{L^{#1,#2}(\Omega)}
\def\lorr#1#2{\mathbb{L}^{#1,#2}(\Omega)}
\def\diver{{\rm div}}
\def\elle#1{L^{#1}(\Omega)}
\def\w{H_0^{1}(\Omega)}
\def\io{\int_{\Omega}}
\def\norma#1#2{\|#1\|_{\lower 4pt \hbox{$\scriptstyle #2$}}}
\def\un{u_n}
\def\wn{w_n}
\def\fn{f_n}
\def\Gk{G_k}
\def\R{I \!\!R}
\def\elle#1{L^{#1}(\Omega)}
\def\emme#1{M^{#1}(\Omega)}
\def\w{W_0^{1,2}(\Omega)}
\def\w1{W_0^{1,1}(\Omega)}
\def\w{H_0^{1}(\Omega)}
\def\be{\begin{equation}}
\def\ee{\end{equation}}
 \numberwithin{equation}{section}
\author[S. Buccheri]{Stefano Buccheri}
\address{Dipartimento di Matematica, "Sapienza" Universit\`a di Roma, Piazzale Aldo Moro, 00100 Rome, Italy.}
\email{buccheri@mat.uniroma1.it}
\title[]{ Gradient estimates for nonlinear elliptic equations with first order terms.}
\keywords{Rearrangement of the gradient, non coercive problems, convection term, drift term.} \subjclass[2010]{35J25, 35J60}
\date{\today}
\begin{document}

\thispagestyle{empty}

\maketitle
\begin{abstract}
We study existence and Lorentz regularity of distributional solutions to elliptic equations with either a convection or a drift first order term. The presence of such a term makes the problem not coercive. The main tools are pointwise estimates of the rearrangements of both the solution and its gradient. 
\end{abstract}
\tableofcontents

\section{Introduction and model problems}
This paper is concerned with the study of existence and Lorentz regularity of distributional solutions to a class of non coercive nonliear elliptic partial differential equations with Dirichlet boundary conditions. The non coercivity is given by the presence of first order terms. To avoid technicalities, in the introduction we present the linear version of such equations, while the general case is treated in Section \ref{mainresults}.\\ Let us consider at first the following model problem
\begin{equation} \label{convection}
\begin{cases}
-\mbox{div}\left(A(x)\nabla u\right)=-\mbox{div}\left(u\,E(x)\right)+f(x) & \mbox{in $\Omega$,} \\
\hfill u = 0 \hfill & \mbox{on $\partial\Omega$,}
\end{cases}
\end{equation}
where $\Omega$ is a bounded open set of $\mathbb{R}^N$, with $N>2$, $A(x)$ is a matrix with measurable coefficients that satisfies for $\alpha, \beta>0$
\begin{equation}\label{00}
\alpha|\xi|^2\le A(x)\xi\cdot\xi, \quad |A(x)| \leq\beta,  \quad  \mbox{a.e.}\; x\in\Omega,\quad \forall \;\xi\in\R^N,
\end{equation}
the vector field $E(x)$ belongs either to the Lebesgue or the Marcinkiewicz space of order $N$ and the function $f(x)$ belongs to a suitable Lorentz space to be precised (see Section \ref{mainresults} for the definition of these spaces). In the literature the lower order term in divergence form of \eqref{convection} is often called \emph{convection term}.\\ If $f\in\elle{(2^*)'}$ we can consider the \emph{weak formulation} of problem \eqref{convection}, namely
\begin{equation}\label{13:37}
u\in W^{1,2}_0(\Omega) \ : \ \io A(x)\nabla u\nabla\phi=\io u E(x)\nabla\phi+ \io f(x)\phi \ \ \ \forall \ \phi\in W^{1,2}_0(\Omega).
\end{equation}
The assumption 
\begin{equation}  \label{!econ}
 E(x)\in\left(\elle{N}\right)^N
\end{equation}
assures that the convection term of \eqref{13:37} is well defined since
\begin{equation}\label{27-11}
v E(x)\nabla \phi \in\elle{1} \ \ \ \forall \ v,\phi\in W^{1,2}_0(\Omega).
\end{equation}
Notice that \eqref{!econ} is not the most general condition in order to have \eqref{27-11}. Indeed if we assume
\begin{equation}\label{27-11bis}
E(x)\in\left(\emme{N}\right)^N,
\end{equation}
it follows that \eqref{27-11} continues to hold, thanks to the sharp Sobolev Embedding in Lorentz spaces, namely
\[
W^{1,q}_0(\Omega)\subset \lor{q^*}{q} \ \ \ \mbox{with} \ \ \ 1\le q<\infty.
\]
If $f\in\elle 1$, problem \eqref{convection} has to be meant trough the following \emph{distributional formulation}
\begin{equation}\label{!16:57}
u\in W^{1,1,}_0(\Omega) \ : \ \io A(x)\nabla u\nabla\phi=\io u E(x)\nabla\phi+ \io f(x)\phi \ \ \ \forall \ \phi\in C^1_0(\Omega).
\end{equation}
Notice again that, assuming either \eqref{!econ} or \eqref{27-11bis}, we have that $v E(x)\in\elle1$ for any $v\in W^{1,1,}_0(\Omega)$.\\

The main feature of \eqref{convection} is the non coercivity of the convection term, as it can be seen with the following heuristic argument. Assuming for simplicity \eqref{!econ}, $f\in\elle{(2^*)'}$ and letting $u\in W^{1,2}_0(\Omega)$ be the solution of \eqref{13:37}, we obtain that 
\[
\alpha\|u\|^2_{W^{1,2}_0(\Omega)}\le \frac{\|E\|_{\elle{N}}}{\mathcal{S}}\|u\|^2_{W^{1,2}_0(\Omega)}+\|f\|_{\elle{\left(2^{*}\right)'}}\|u\|_{\elle{2^*}},
\] 
where $\mathcal{S}$ denotes the Sobolev constant. Thus if the value of $\|E\|_{\elle N}$ is large, the first term in the right hand side above cannot be absorbed in the left hand one.\\

The classical approach in dealing with \eqref{convection} (see for instance \cite{S66}, \cite{LU} and \cite{TRU0}) is to assume a smallness condition on the $\elle N$-norm, 
 \begin{equation}\label{smal0}
\|E\|_{\elle N}<\mathcal{S}\alpha,
\end{equation}
or a sign condition on the distributional divergence of $E(x)$,
\begin{equation}\label{zerodiv0}
\io E(x)\nabla\phi\ge0 \ \ \ \forall \  \phi\in C^1_0(\Omega),
\end{equation}
so that the problem becomes coercive. Alternatively, to restore the lack of coercivity, one can add an \emph{absorption term} in the left hand side of \eqref{convection} (see for instance \cite{S66} or the more recent \cite{DD}).\\

One naturally wonders if assumptions \eqref{smal0} or \eqref{zerodiv0} are necessary or rather it is possible to achieve a priori estimates for the solution of \eqref{convection} even if the associated operator is not coercive. The answer is given in \cite{DP} and \cite{B09} where it has been proven the following result.
\begin{teo}[\cite{DP}, \cite{B09}]\label{!teo0}
Let us assume \eqref{00}, $E\in\left(\elle N\right)^N$ and that $f\in\elle{m}$ with $1< m<\frac{N}{2}$. Then\\
(i) if $(2^*)'\le m<\frac{N}{2}$ there exists $u\in\elle{m^{**}}\cap W^{1,2}_0(\Omega)$ solution of \eqref{13:37};\\
(ii) if $1<m< (2^*)'$ there exists $u\in W^{1,m^*}_0(\Omega)$ solution of \eqref{!16:57}.

\end{teo}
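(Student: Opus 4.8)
The plan is the classical one for non-coercive problems — approximate, solve the approximate problems, obtain \emph{a priori} estimates uniform in the approximation, pass to the limit — the only subtle ingredient being that the estimates are produced by a Talenti-type symmetrization in which the convection term is \emph{not} absorbed by Cauchy--Schwarz (which would force the smallness condition \eqref{smal0}) but kept inside the rearrangement inequality and handled by a Gronwall argument; this is essentially the route of \cite{DP} and \cite{B09}.

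\smallskip
\noindent\emph{Step 1 (approximation).} Set $E_n(x)=E(x)\big/\big(1+\tfrac1n|E(x)|\big)$, so that $|E_n|\le\min\{n,|E(x)|\}$ a.e.\ and $E_n\to E$ in $(L^N(\Omega))^N$ and a.e.\ in $\Omega$. For fixed $n$ the approximate problem
\[
-\diver\!\big(A(x)\nabla u_n\big)=-\diver\!\big(u_nE_n(x)\big)+f(x),\qquad u_n\in W^{1,2}_0(\Omega),
\]
has a solution: since $E_n\in(L^\infty(\Omega))^N$, the map $v\mapsto-\diver(vE_n)\in W^{-1,2}(\Omega)$ is compact on $W^{1,2}_0(\Omega)$ by Rellich's theorem, hence a compact perturbation of the isomorphism $-\diver(A\nabla\,\cdot\,)$, and existence follows from the Leray--Schauder principle. (To avoid invoking the a priori bound already here, one may instead solve $-\diver(A\nabla u_n)=-\diver\!\big(b_n(u_n)E_n\big)+f$ with $b_n(s)=s/(1+\tfrac1n|s|)$, where $b_n(u_n)E_n$ ranges in a fixed ball of $(L^\infty(\Omega))^N$ and Schauder's fixed point theorem applies directly; nothing below changes, since $|b_n(u_n)|\le|u_n|$.)

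\smallskip
\noindent\emph{Step 2 (a priori estimates by rearrangement).} This is the heart of the proof. For $t\ge0$, $h>0$ test the equation with $\psi_{t,h}(u_n)$, equal to $\mathrm{sign}(u_n)$ on $\{|u_n|\ge t+h\}$, affine on $\{t<|u_n|<t+h\}$ and zero on $\{|u_n|\le t\}$. Letting $h\to0$ and using the ellipticity of $A$, the coarea formula and the isoperimetric inequality as in Talenti's comparison, one obtains for a.e.\ $t$ a differential inequality for $\mu(t)=|\{|u_n|>t\}|$ of the schematic form
\[
\alpha\,N^2\omega_N^{2/N}\,\mu(t)^{2-2/N}\ \le\ \big(-\mu'(t)\big)\,\Big[\ \int_0^{\mu(t)}f^*(s)\,ds\ +\ \|E\|_{L^N}\,g_n(t)\ \Big],\qquad \int_0^{+\infty}g_n(t)\,dt\ \le\ c\,\|\nabla u_n\|_{L^2(\Omega)},
\]
where the datum contributes through Hardy--Littlewood and the convection term through Hölder in Lorentz spaces together with $\|E_n\|_{L^N}\le\|E\|_{L^N}$. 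Integrating and closing the coupling by Gronwall's inequality gives pointwise bounds for $u_n^*$, and for $|\nabla u_n|^*$ by the analogous computation, in terms of $f^*$ and $\|E\|_{L^N}$ only, with no smallness required. When $(2^*)'\le m<\tfrac N2$ this bounds $u_n$ in $L^{m^{**}}(\Omega)\hookrightarrow L^{2^*}(\Omega)$; testing then the equation with $u_n$ and estimating $\io u_nE_n\nabla u_n$ by Hölder with exponents $2^*,N,2$ absorbs the first order term and yields a uniform bound in $W^{1,2}_0(\Omega)$. When $1<m<(2^*)'$, the same rearrangement estimate for $\mu(t)$, together with the truncated energy estimate $\io|\nabla T_k(u_n)|^2\le Ck$ and a Boccardo--Gallou\"et measure argument for $\{|\nabla u_n|>\lambda\}$, gives a uniform bound in $W^{1,m^*}_0(\Omega)$.

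\smallskip
\noindent\emph{Step 3 (passage to the limit).} From Step 2, along a subsequence $u_n\rightharpoonup u$ in $W^{1,2}_0(\Omega)$ (resp.\ in $W^{1,m^*}_0(\Omega)$), and by the compact Sobolev embedding $u_n\to u$ in $L^r(\Omega)$ for every $r<2^*$ (resp.\ $r<m^*$) and a.e.\ in $\Omega$, so the regularity claimed in (i)--(ii) is inherited by lower semicontinuity. Since $u_n\to u$ in $L^{N'}(\Omega)$ while staying bounded in a slightly better space, and $E_n\to E$ in $(L^N(\Omega))^N$, the generalized dominated convergence theorem gives $u_nE_n\to uE$ in $(L^1(\Omega))^N$; hence $\io u_nE_n\nabla\phi\to\io uE\nabla\phi$ for every $\phi\in C^1_0(\Omega)$, and for every $\phi\in W^{1,2}_0(\Omega)$ in case (i) by density and the uniform $W^{1,2}_0$-bound. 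The diffusion term passes to the limit by weak convergence and the $f$-term trivially, so $u$ solves \eqref{13:37} in case (i) and \eqref{!16:57} in case (ii). The one genuinely delicate point is the handling of the convection term in Step 2: a crude Cauchy--Schwarz reproduces exactly the non-absorbable quantity $\tfrac{\|E\|_{L^N}}{\mathcal S}\|u\|^2_{W^{1,2}_0(\Omega)}$ appearing in the heuristic above, and it is only by keeping that term inside the Talenti inequality and closing the latter by Gronwall that the estimates become uniform in $n$ without assuming \eqref{smal0} or \eqref{zerodiv0}.
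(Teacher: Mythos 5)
Your skeleton (approximate as in \eqref{apprp}, derive pointwise rearrangement estimates by Talenti's inequality with the convection term kept inside and closed by a Gronwall-type argument, then pass to the limit by compactness and equi-integrability) is indeed the route of \cite{DP} and \cite{B09}; the paper does not reprove Theorem \ref{!teo0} but the same mechanism is carried out in its Lemmas \ref{regup}--\ref{dupap}, and your Steps 1 and 3 are essentially identical to the paper's approximation and limit passage and are fine.

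The gap is in Step 2, which is the only delicate step. First, your schematic differential inequality has the wrong structure: in the Talenti argument the convection contribution enters multiplied by the level $t=\overline{u}_n(\mu(t))$, so that after composing with the rearrangement one obtains an inequality that is \emph{linear} in $\overline{u}_n$ with coefficient the pseudo-rearrangement of $|E|^2$ (cf. \eqref{SB} in Lemma \ref{regup}), and only then does an integrating factor/Gronwall argument apply. Instead you attach a function $g_n$ with $\int_0^\infty g_n\le c\,\|\nabla u_n\|_{L^2(\Omega)}$ — a bound you do not justify — which couples the pointwise estimate to the yet-unknown energy; in case (i) the energy bound is then derived from the $L^{m^{**}}$ bound, so the argument as written is circular unless the dependence is shown to be sublinear. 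Second, and more importantly, the assertion that Gronwall closes the estimate ``in terms of $f^*$ and $\|E\|_{L^N}$ only, with no smallness required'' is precisely the crux and is not proved. The integrating factor is $\exp\big(c\int_s^t \mathcal{D}_n(\tau)^{1/2}\tau^{\frac1N-1}d\tau\big)$, and since $\tau^{\frac1N-1}$ just fails to be dual to $L^{N/2}$ on $(0,|\Omega|)$, this factor is in general a power $(t/s)^{\gamma}$ with $\gamma$ proportional to the norm of $E$; when $\gamma$ is large the claimed relation between the regularity of $f$ and of $u$ is lost (Remark \ref{nirvana}, \cite{BO}). What rescues the $L^N$ case without smallness is either the splitting $E=\mathcal{F}+\mathcal{E}$ with $\mathcal{F}$ bounded and $\|\mathcal{E}\|_{L^N(\Omega)}$ arbitrarily small (absolute continuity of the $L^N$ norm), which is exactly how \eqref{EV}/\eqref{marp} is verified for $E\in (L^N(\Omega))^N$ and how Lemma \ref{regup} tames the factor to $C(s/t)^\gamma$ with $\gamma<\frac{N-2m}{Nm}$, or an explicit estimate showing that for $E\in L^N$ the exponent grows at most like $\|E\|_{L^N}(\log(t/s))^{1-\frac1N}$, hence sub-polynomially. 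Your sketch uses $E$ only through the size of $\|E\|_{L^N}$ via H\"older/Lorentz duality, an argument that would apply verbatim to $E\in M^N(\Omega)$, where the conclusion is false for large norm; so the key step, as written, would prove too much and cannot be complete. The same missing ingredient also undermines case (ii), since the Boccardo--Gallou\"et step there relies on the level-set decay produced by this very estimate.
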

Thus, not only problem \eqref{convection} is solvable in $W^{1,2}_0(\Omega)$ for any vector field $E$ satisfying \eqref{!econ} (no matter the size of its norm), but also the same sharp regularity result of the case $E\equiv0$ (see \cite{bg2}) is recovered, even for distributional solutions with data outside $W^{-1,2}(\Omega)$. Let us also mention \cite{DPbis}, for similar results with more restrictive assumptions on the summability of $E(x)$.\\ We stress that, even if Theorem \ref{!teo0} is stated for a linear problem, in \cite{DP}, \cite{DPbis} a more general non linear versions of \eqref{convection} is treated. Moreover \cite{DP} and \cite{DPbis} consider an equation with both convection and drift (see \eqref{!drift} below) first order terms, assuming a smallness condition on at least one of them. We do not treat these two lower order terms together and the reason is explained at the end of this section.\\

Let us briefly describe the approaches used in \cite{B09} and \cite{DP} in order to deal with problem \eqref{convection}. The strategy of the first paper hings on an a priori estimate on the measure of the super level sets of $u$. Such estimate \emph{bypasses} in some sense the non coercivity of the problem and allows the author to recover the integral estimates for $u$ and $|\nabla u|$.\\
On the other hand, in \cite{DP} (see also \cite{DPbis}) the authors approach problem \eqref{convection} by symmetrization technique (see \cite{T}): the main idea is to deduce a differential inequality for the decreasing rearrangement of $u$ (see Sections \ref{mainresults} and \ref{unoo} for a brief introduction on this issue) and compere it with the rearrangement of the solution of a suitable symmetrized problem. Since the solution of the symmetrized problem is explicit one recovers the a priori estimate for $u$ and, in turn, the energy estimate for the gradient. Let us stress that such a symmetrization approach does not provide any information about the regularity of $|\nabla u|$.\\

Our main contribution for problem \eqref{convection} (and its nonlinear counterpart) is to complete the relation between the regularity of $f$ and $u, \, |\nabla u|$ in the framework of Lorentz spaces under optimal conditions on the summability of $E(x)$. More in detail we provide the following result (see Theorems \ref{teohighp} and \ref{teolowp} in Section \ref{mainresults} for the general case).
\begin{teo}\label{!!teo1}
Assume \eqref{00},  $f\in\lor{m}{q}$, with $1 <m <\frac{N}{2}$, $0<q\le\infty$, $E\in\left(\emme N\right)^N$ and moreover that there exist
\begin{equation}\label{EV}
\mathcal{F}\in\left(\elle{\infty}\right)^N, \ \ \mathcal{E}\in\left(\emme{N}\right)^N \ \ \ \mbox{with} \ \ \ \|\mathcal{E}\|_{\emme N}<\alpha\frac{N-2m}{m},
\end{equation}
such that $E=\mathcal{F}+\mathcal{E}$.
Hence there exists $u$  solution of \eqref{!16:57}. Moreover 
\begin{itemize}
\item if $1 <m <(2^*)'$, then $u\in \lor{\frac{Nm}{N-2m}}{q}$ and $|\nabla u|\in \lor{\frac{Nm}{N-m}}{q}$;
\item if $(2^*)'< m<\frac{N}{2}$, then $u\in W^{1,2}_0(\Omega)\cap\lor{\frac{Nm}{N-2m}}{q}$.
\end{itemize}
\end{teo}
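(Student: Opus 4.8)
The plan is to prove Theorem~\ref{!!teo1} via an approximation scheme combined with \emph{a priori} rearrangement estimates, exploiting the splitting $E=\mathcal{F}+\mathcal{E}$ to isolate the ``dangerous'' part $\mathcal{E}$ (whose Marcinkiewicz norm is controlled by the smallness condition in \eqref{EV}) from the bounded part $\mathcal{F}$ (which is harmless since $\mathcal{F}\in(\lio)^N$). First I would regularize the data: set $f_n=T_n(f)$ (or a smooth truncation) and $E_n$ a truncation of $E$, so that $f_n\in\lio$, $E_n\in(\lio)^N$, and consider the coercive approximate problems
\begin{equation*}
u_n\in\w \ : \ \io A(x)\nabla u_n\nabla\phi=\io u_n E_n(x)\nabla\phi+\io f_n\phi \quad \forall\,\phi\in\w,
\end{equation*}
which admit solutions by Schauder/Leray--Schauder (noting that for bounded $E_n$ the operator is a compact perturbation, or that $\elle\infty$ data plus $\elle\infty$ drift gives $u_n$ bounded by Stampacchia). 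The point is that the \emph{a priori} bounds I derive must be \emph{uniform in $n$} and depend on $E$ only through $\|\mathcal{E}\|_{\emme N}$ and $\|\mathcal{F}\|_{\lio}$, not on $\|E_n\|_{\lio}$.

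The heart of the argument is the symmetrization step. Taking $\phi=T_k(G_h(u_n))$-type test functions (or working directly with the level sets $\{|u_n|>t\}$), using the coarea formula and the isoperimetric inequality exactly as in the classical Talenti scheme, one arrives at a differential inequality for the decreasing rearrangement $u_n^*$. The convection term $\io u_n E_n\nabla\phi$ is handled by writing $E_n=\mathcal{F}_n+\mathcal{E}_n$: the contribution of $\mathcal{E}_n$, after applying H\"older in Lorentz spaces and the Sobolev--Lorentz embedding on level sets, produces a term that can be absorbed into the left-hand side precisely because $\|\mathcal{E}\|_{\emme N}<\alpha\frac{N-2m}{m}$ (this is where the exact constant $\frac{N-2m}{m}$ enters — it is the reciprocal of the exponent appearing in the integration of the ODE up to the critical Lorentz scale), while the contribution of $\mathcal{F}_n$, being in $\lio$, only contributes a lower-order perturbation controlled by $\|\mathcal{F}\|_{\lio}$ times a norm of $u_n$ that sits below the critical one and hence is reabsorbed or handled iteratively. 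Solving the resulting differential inequality and comparing with the explicit radial solution of the symmetrized problem yields the pointwise bound on $u_n^*$, hence $u_n\in\lor{\frac{Nm}{N-2m}}{q}$ with norm uniform in $n$; a second, analogous rearrangement argument applied to $|\nabla u_n|$ (following the gradient-rearrangement technique advertised in the abstract, e.g. using $\phi=T_k(u_n)$ on annuli and estimating $\int_{\{t<|u_n|<t+h\}}|\nabla u_n|$) gives the bound $|\nabla u_n|\in\lor{\frac{Nm}{N-m}}{q}$ in the subcritical range $m<(2^*)'$, and the energy bound in $\w$ when $(2^*)'<m<\frac N2$.

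With the uniform estimates in hand, the passage to the limit is routine but requires care: from $u_n$ bounded in $\lor{\frac{Nm}{N-2m}}{q}$ and $\nabla u_n$ bounded in the appropriate Lorentz space (or in $L^2$), extract weakly convergent subsequences; upgrade to a.e.\ convergence of $\nabla u_n$ by the standard Boccardo--Murat / Leray--Lions monotonicity trick (testing with $T_k(u_n-u)$ — here the non-coercivity reappears in the convection term, but the term $\io(u_n-u)E_n\nabla T_k(u_n-u)$ is controlled because $u_n\to u$ strongly in $\elle{q^*}$ on the critical Lorentz scale by compactness of the embedding off the endpoint, or by a truncation argument); then identify the limits in each term of \eqref{!16:57}, using $\nabla u_n\to\nabla u$ a.e.\ plus equi-integrability (Lorentz bounds) to pass to the limit in $A(x)\nabla u_n$ and in $u_n E_n$.

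The main obstacle I expect is making the absorption in the differential inequality fully rigorous \emph{with the sharp constant} $\alpha\frac{N-2m}{m}$: one must track how the Marcinkiewicz norm of $\mathcal{E}$, the ellipticity $\alpha$, and the Lorentz-space Sobolev embedding constants combine when integrating the ODE for $u_n^*$, and ensure that the bounded part $\mathcal{F}$ genuinely only perturbs at lower order (which likely needs an a priori $\elle1$ or $\elle s$ bound on $u_n$ for small $s$, bootstrapped from a weaker version of the estimate, so the argument may have to be run in two stages — first establish $u_n\in\lor{s}{\cdot}$ for some $s$ below the target, then feed this back in). A secondary technical point is the gradient-rearrangement estimate itself, which is more delicate than the estimate on $u_n^*$ and must be carried out on level sets where the convection term interacts with $\nabla u_n$ directly; handling that interaction uniformly in $n$, again via the splitting $E=\mathcal{F}+\mathcal{E}$, is the other place where the smallness of $\|\mathcal{E}\|_{\emme N}$ is essential.
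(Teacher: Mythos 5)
Your proposal follows essentially the same route as the paper: approximate problems, a Talenti-type differential inequality for $\overline{u}_n$ exploiting the splitting $E=\mathcal{F}+\mathcal{E}$, a separate pointwise rearrangement estimate for $\overline{|\nabla u_n|}$, Lorentz bounds via Hardy-type lemmas, and passage to the limit through a.e.\ convergence of the gradients plus equi-integrability of $u_n E_n$. The only execution detail worth noting is that in the paper the small part $\mathcal{E}$ is not literally absorbed but handled by an integrating-factor (Gronwall) argument producing a growth exponent $\gamma<\frac{N-2m}{Nm}$ — which is where the sharp constant in \eqref{EV} enters — and the bounded part $\mathcal{F}$ contributes only a bounded factor to that exponential (via its pseudo-rearrangement), so the two-stage bootstrap you anticipate for $\mathcal{F}$ is unnecessary.
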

Let us briefly comment this result. The more interesting (and difficult) part of its proof is the first one ($1 <m <(2^*)'$), where the regularity of the gradient increases with the regularity of the datum. To prove it we need pointwise estimates not only for $\overline{u}$, the decreasing rearrangement of $u$, but also for $\overline{|\nabla u|}$, the decreasing rearrangement of $\nabla u$. Let us stress  again that, while estimates of $\overline{u}$ are already known in the literature for problems similar to \eqref{convection} (see for instance \cite{BM} for the case $(2^*)'< m<\frac{N}{2}$ and also \cite{DPbis} \cite{DP}), the estimate for $\overline{|\nabla u|}$ is new.\\
Moreover assumption \eqref{EV} is optimal in the sense that, if \[\|\mathcal{E}\|_{\emme N}\ge \alpha\omega_N^{\frac{1}{N}}\frac{N-2m}{m},\] the standard relation between the regularity of $f$ and $u$ is lost and the regularity of the solution depends on the value of the Marcinkiewicz norm of $\mathcal{E}$ (see \cite{BO} and Remark \ref{nirvana}). Let us also notice that the \eqref{EV} is more general then \eqref{!econ}.\\
In Section \ref{mainresults} we generalize Theorem \ref{!!teo1} considering a more general non linear operator. In this nonlinear setting we also deal with solutions in $W^{1,1}_0(\Omega)$. This represent an additional difficulty due to the lack of compactness of bounded sequences in such a space.\\

 For the same result in the case $E\equiv0$ we refer to \cite{B9} for $f$ in Marcinkiewicz spaces (see also \cite{KL}) and \cite{AFT} for data in Lorentz spaces. We have also to mention that unfortunately our approach does not cover the case $m=(2^*)'$. This borderline case has been solved by \cite{Min}, if $E\equiv 0$, using non standard (nonlinear) potential arguments.
\\

An example of the second type of problems that we consider is 
\begin{equation}\label{!drift}
\begin{cases}
-\diver (A(x)\nabla w) =  E(x) \nabla w + f(x) & \mbox{in $\Omega$,} \\
\hfill w = 0 \hfill & \mbox{on $\partial\Omega$},
\end{cases}
\end{equation}
with $A(x)$ satisfying \eqref{00}, $E(x)$ as in \eqref{!econ} or \eqref{27-11bis} and $f$ that belongs to a Lorentz space. The first order term in the equation above is also called {\it drift} term. \\

In this linear setting \eqref{!drift} is (at least formally) the dual problem of \eqref{convection} and one can use a duality approach to recover existence and regularity results (see \cite{DD}, \cite{B18bis}, \cite{BBC}). Anyway here we treat problem \eqref{!drift} independently from \eqref{convection}, following the same spirit and aims of the previous case.\\
Similarly to the convection one, also the drift term makes the operator of \eqref{!drift} not coercive, unless an additional smallness assumption on the $\elle N$ norm of $E(x)$ is assumed. Once again it is proved that such assumption is unnecessary for the existence of a weak solution, see \cite{BM}, \cite{DPbis} and \cite{DP}.
While in the last three papers problem \eqref{!drift} is studied with symmetrization techniques, in \cite{BM} the authors obtain energy estimates for \eqref{!drift} by means of a \emph{slice method} that is based on continuity properties of some modified distribution function of $w$ (see \cite{BMMP} and the more recent \cite{Bucc} for related results).\\

Here we adapt the techniques developed for problem \eqref{convection} to recover Lorentz regularity results also for problem \eqref{!drift}. Being mainly interested in solution outside the energy space, let us introduce the distributional formulation of \eqref{!drift}.
\begin{equation}\label{!!17:38}
w\in W^{1,r}_0(\Omega) \ : \ \io A(x)\nabla w\nabla\phi=\io E(x)\nabla w \phi+ \io f(x)\phi \ \ \ \forall \ \phi\in C^1_0(\Omega),
\end{equation}
with $r>\frac{N}{N-1}$. Notice that we have to impose that $w\in W^{1,r}_0(\Omega)$, with $r>\frac{N}{N-1}$, so that the lower order term of \eqref{!!17:38} is well defined.\\ Also in this case the key point is to obtain pointwise estimate for $\overline{w}$ and $\overline{|\nabla w|}$, the decreasing rearrangements of $w$ and $|\nabla w|$. Let us state the existence and regularity result for problem \eqref{!!17:38}
\begin{teo}\label{!!teo2}
Assume \eqref{00},  $f\in\lor{m}{q}$, with $1 <m <\frac{N}{2}$, $0<q\le\infty$,$E\in\left(\emme N\right)^N$ and moreover that there exist
\begin{equation*}
\mathcal{F}\in\left(\elle{\infty}\right)^N, \ \ \mathcal{E}\in\left(\emme{N}\right)^N \ \ \ \mbox{with} \ \ \ \|\mathcal{E}\|_{\emme N}<\alpha N\frac{m-1}{m},
\end{equation*}
such that $E=\mathcal{F}+\mathcal{E}$. Hence there exists $w$ distributional solution of \eqref{!!17:38}. Moreover 
\begin{itemize}
\item if $1 <m <(2^*)'$, then $w\in \lor{\frac{Nm}{N-2m}}{q}$ and $|\nabla w|\in \lor{\frac{Nm}{N-m}}{q}$;
\item if $(2^*)'< m<\frac{N}{2}$, then $w\in W^{1,2}_0(\Omega)\cap\lor{\frac{Nm}{N-2m}}{q}$.
\end{itemize}
\end{teo}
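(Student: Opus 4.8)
The plan is to obtain $w$ as a limit of solutions of approximating problems and to transfer to the limit a priori estimates on the decreasing rearrangements $\overline{w_n}$ and $\overline{|\nabla w_n|}$ produced by a Talenti-type symmetrization argument; the scheme runs parallel to that of Theorem~\ref{!!teo1}, the drift term now playing the role of the convection one. For $n\in\N$ I would choose bounded measurable vector fields $\mathcal{E}_n$ with $|\mathcal{E}_n|\le|\mathcal{E}|$, $\mathcal{E}_n\to\mathcal{E}$ a.e.\ in $\Omega$ and $\|\mathcal{E}_n\|_{\emme N}\le\|\mathcal{E}\|_{\emme N}$, set $E_n=\mathcal{F}+\mathcal{E}_n\in(\lio)^N$, and pick $f_n\in C^1_0(\Omega)$ with $f_n\to f$, $\overline{f_n}\le\overline{f}$ and $\|f_n\|_{\lor{m}{q}}\le C\|f\|_{\lor{m}{q}}$. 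Existence of a solution $w_n\in W^{1,2}_0(\Omega)$ of $-\diver(A(x)\nabla w_n)=E_n\cdot\nabla w_n+f_n$ is standard once $E_n$ is bounded (e.g.\ by Schauder's fixed point theorem after regularizing the gradient dependence of the drift and then removing the regularization by means of the a priori bounds below), or follows from the existence results for bounded drift recalled in the Introduction.

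The core is the a priori estimate. Fixing $t>0$, using $\tfrac1h\,T_h(G_t(|w_n|))\,\mathrm{sign}(w_n)\in W^{1,2}_0(\Omega)\cap\lio$ as test function, letting $h\to0$ and writing $\mu_n(t)=|\{|w_n|>t\}|$, the ellipticity \eqref{00}, the coarea formula, the Cauchy--Schwarz inequality on $\{|w_n|=t\}$ and the isoperimetric inequality yield, for a.e.\ $t>0$,
\[
\alpha N^2\omega_N^{2/N}\,\mu_n(t)^{2-\frac{2}{N}}\le(-\mu_n'(t))\Big[\int_{\{|w_n|>t\}}(|\mathcal{E}_n|+|\mathcal{F}|)|\nabla w_n|+\int_0^{\mu_n(t)}\overline{f_n}(\sigma)\,d\sigma\Big].
\]
Passing to rearrangements (Hardy--Littlewood), using $\overline{|\nabla w_n|\chi_{\{|w_n|>t\}}}\le\overline{|\nabla w_n|}$ and the definition of the $\emme N$ norm, the $\mathcal{E}_n$ part is controlled, up to a dimensional constant, by $\|\mathcal{E}\|_{\emme N}\int_0^{\mu_n(t)}\sigma^{-1/N}\overline{|\nabla w_n|}(\sigma)\,d\sigma$, which reintroduces $\overline{|\nabla w_n|}$, while the bounded term $|\mathcal{F}|$ contributes a subcritical quantity treated by a Gronwall-type integration (finite constant, exponential in $\|\mathcal{F}\|_{\lio}|\Omega|^{1/N}$, no smallness needed). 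To close the system I would bring in the companion gradient estimate: from $|\nabla w_n|\le\alpha^{-1/2}(A\nabla w_n\cdot\nabla w_n)^{1/2}$ one gets $\int_{\{|w_n|=t\}}|\nabla w_n|\,d\mathcal{H}^{N-1}\le\alpha^{-1}\big(-\tfrac{d}{dt}\int_{\{|w_n|>t\}}A\nabla w_n\cdot\nabla w_n\big)$, whose right-hand side is controlled through the equation by the same drift and datum at level $t$; combining this with the coarea identity $\int_{\{|w_n|=t\}}|\nabla w_n|\,d\mathcal{H}^{N-1}=-\tfrac{d}{dt}\int_{\{|w_n|>t\}}|\nabla w_n|^2\,dx$ leads to a pointwise estimate for $\overline{|\nabla w_n|}$ itself. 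The smallness hypothesis $\|\mathcal{E}\|_{\emme N}<\alpha N\frac{m-1}{m}$ is exactly what makes the coupled system solvable. Integrating in the variable $s=\mu_n(t)$ produces explicit pointwise bounds $\overline{w_n}(s)\le\Psi_1(s)$ and $\overline{|\nabla w_n|}(s)\le\Psi_2(s)$ on $(0,|\Omega|)$, with $\Psi_1,\Psi_2$ depending only on $\overline{f}$ and on $\alpha,N,\|\mathcal{F}\|_{\lio},\|\mathcal{E}\|_{\emme N},|\Omega|$; since $\Psi_1\in\lor{\frac{Nm}{N-2m}}{q}$ and $\Psi_2\in\lor{\frac{Nm}{N-m}}{q}$ when $1<m<(2^*)'$, this gives uniform bounds for $w_n$ and $|\nabla w_n|$ in those spaces. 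For $(2^*)'<m<\frac N2$ one additionally tests with $w_n$: the $\mathcal{E}_n$-part of $\io|E_n||\nabla w_n||w_n|$ is absorbed via the smallness and the sharp Sobolev embedding $W^{1,2}_0(\Omega)\subset\lor{2^*}{2}$, the $\mathcal{F}$-part by Young's inequality and the already established bound $\overline{w_n}\le\Psi_1\in\elle2$, and one obtains $\|w_n\|_{W^{1,2}_0(\Omega)}\le C$.

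It remains to pass to the limit. The uniform bounds give $w_n\rightharpoonup w$ weakly in $W^{1,r}_0(\Omega)$ for some $r>\frac N{N-1}$ (or weakly in $W^{1,2}_0(\Omega)$ when $m>(2^*)'$); testing the difference of two approximate equations with truncations $T_k(w_n-w_\ell)$ and using the uniform bound on $|\nabla w_n|$, a Boccardo--Murat type argument upgrades this to $\nabla w_n\to\nabla w$ a.e.\ in $\Omega$. Since $|\mathcal{E}_n|\le|\mathcal{E}|\in\emme N$, $\mathcal{F}\in\lio$ and $|\nabla w_n|$ is equi-integrable in $\lor{\frac{Nm}{N-m}}{q}$ (or in $\elle2$), a Vitali argument lets one pass to the limit in $\io(E_n\cdot\nabla w_n)\phi$ and in $\io f_n\phi$ for every $\phi\in C^1_0(\Omega)$, so that $w$ solves \eqref{!!17:38}; the stated Lorentz regularity of $w$ and $|\nabla w|$ then follows from the lower semicontinuity of the Lorentz (quasi-)norms under a.e.\ convergence, applied to $\overline{w_n}\le\Psi_1$ and $\overline{|\nabla w_n|}\le\Psi_2$.

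I expect the main obstacle to be the gradient estimate. A pointwise bound for $\overline{|\nabla w_n|}$ is not available in the literature for non-coercive problems: the drift term $\int_{\{|w_n|>t\}}|\mathcal{E}_n||\nabla w_n|$ feeds the gradient back into the differential inequality and must be absorbed precisely at the threshold $\alpha N\frac{m-1}{m}$, which forces one to run the estimates for $\overline{w_n}$ and $\overline{|\nabla w_n|}$ as a coupled system rather than in sequence, while simultaneously the $\lio$ component $\mathcal{F}$ must be carried along as a lower-order perturbation without spoiling the homogeneity of the scheme. A secondary difficulty, for $m$ close to $1$, is the lack of compactness in $W^{1,1}_0(\Omega)$, which is why the a.e.-convergence-of-gradients step — rather than plain weak compactness — is needed in the passage to the limit.
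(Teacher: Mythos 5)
Your overall scaffolding (truncated approximations, rearrangement estimates for $w_n$ and $|\nabla w_n|$, a.e. convergence of gradients, lower semicontinuity of rearrangements) coincides with the paper's, but the step you yourself single out as the crux --- closing the estimate when the drift feeds $|\nabla w_n|$ back into the level-set inequality --- is asserted rather than proved, and as formulated it does not close. After Hardy--Littlewood you bound $\int_{\{|w_n|>t\}}|\mathcal{E}_n||\nabla w_n|$ by $\|\mathcal{E}\|_{\emme N}\int_0^{\mu_n(t)}\sigma^{-1/N}\overline{|\nabla w_n|}(\sigma)\,d\sigma$, where $\overline{|\nabla w_n|}$ is the \emph{global} rearrangement of the gradient; your ``companion'' relation, however, only controls superlevel-set quantities such as $-\frac{d}{dt}\int_{\{|w_n|>t\}}|\nabla w_n|^2$, which is not the same object, so the two relations do not form a solvable system, and no mechanism (Gronwall in which variable, with which kernel, at which constant) is exhibited. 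The paper avoids this circularity altogether: in Lemma \ref{sgo} the drift term is written as $\int_k^{\infty}\frac{d}{ds}\big(\int_{\{|w_n|>s\}}|E_n||\nabla w_n|^{p-1}\big)ds$ and estimated by H\"older against the \emph{pseudo-rearrangement} of $|E|^{p'}$ relative to $w_n$ (Lemmas \ref{dife} and \ref{inl}), so that the only unknown is the single function $s\mapsto\big(-\frac{d}{ds}\int_{\{|w_n|>s\}}|\nabla w_n|^p\big)^{1/p'}$; the loop is then closed by the backward Gronwall inequality of Lemma \ref{gron}, and after two applications of \eqref{talenti} and an integration by parts the drift appears only as the factor $(t/s)^{B/(\alpha\sigma_N)}$. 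The threshold $\|\mathcal{E}\|_{\emme N}<\alpha N\frac{m-1}{m}$ is not used for any absorption: it enters solely through Lemma \ref{trickint} with $\delta=\frac1m+\frac{B}{\alpha\sigma_N}<1$ in Lemma \ref{!!dupa1}. Finally, the passage from level-set quantities to the rearrangement of the gradient is itself nontrivial: it requires the set-valued map of Lemma \ref{16-8} and the splitting $I_1+I_2$, and it yields a bound on the maximal function $\frac1s\int_0^s\overline{|\nabla w_n|}^{p-1}$, not the pointwise bound $\overline{|\nabla w_n|}\le\Psi_2$ you postulate.

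A second, more localized flaw: for $(2^*)'<m<\frac N2$ you propose to absorb the $\mathcal{E}$-part of the drift in the energy estimate by smallness plus the Sobolev embedding. This cannot work on the whole admissible range, since $\alpha N\frac{m-1}{m}\to\alpha(N-2)$ as $m\to\frac N2$, which exceeds the Hardy/Sobolev absorption threshold (of order $\alpha\frac{N-2}{2}$ for $\mathcal{E}\sim|x|^{-1}$). The paper instead first proves the Lorentz bound on $w_n$, uses it to show $\io|E|^2w_n^2\le C$, and then H\"older gives a right-hand side that is \emph{linear} in $\|\nabla w_n\|_{\elle2}$, so no smallness is needed at this stage; the same fix (using your own $\Psi_1$ for the $\mathcal{E}$-part, as you already do for $\mathcal{F}$) would repair your argument, but as written the absorption claim is incorrect.
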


We refer to the next Section \ref{mainresults} for the nonlinear version of Theorems \eqref{convection} and \eqref{!drift}.\\

After studying problems \eqref{convection} and \eqref{!drift} separately, one naturally wonders why do not consider the convection and the drift terms at once. This is what is actually done in \cite{S66}, \cite{TRU0} and \cite{DP} but still imposing some additional constraints, as smallness assumptions on the $L^N$ norm of at least one of the vector field or divergence free assumptions, as \eqref{zerodiv0}. One may wonder if, also in this case, these are just technical assumptions, or rather the presence of the two first order term represents a genuine obstruction to the solvability of the following problem
\begin{equation}\label{intro}
\begin{cases}
-\diver (A(x)\nabla u) =-\mbox{div}\left(u E(x)\right)+ B(x)\nabla u + f(x) & \mbox{in $\Omega$,} \\
\hfill u = 0 \hfill & \mbox{on $\partial\Omega$}.
\end{cases}
\end{equation} 
Let us observe that, in the special case $E, B \in \left(C^{1}(\Omega)\right)^N $ and $E\equiv B$, problem \eqref{intro} becomes 
\begin{equation*}
\begin{cases}
-\diver (A(x)\nabla u) =g(x)u + f(x) & \mbox{in $\Omega$,} \\
\hfill u = 0 \hfill & \mbox{on $\partial\Omega$}.
\end{cases}
\end{equation*}
with $g(x)=-\mbox{div}(E(x))$, that of course is not solvable for a general $E(x)\in\left(C^1(\Omega)\right)^N$. Thus the presence of the two lower order terms involve some spectral issues and we do not treat it.\\

\section{Main results}\label{mainresults}
In order to state our main results in their full generality, we need to introduce some basic definitions and properties about rearrangements and Lorentz spaces.\\ For any measurable function $v:\Omega\to\mathbb{R}$, we define the \emph{distribution function} of $v$ as
\[
A(t):=|\{x\in\Omega \ : \ |v(x)|>t \ \}| \ \ \ \mbox{for} \ t\ge0,
\]
and the \emph{decreasing rearrangement} of $v$ as
\[
\overline{v}(s):= \inf \{t\ge0\ : \ A(t)<s\} \ \ \ \mbox{for} \ s\in[0,|\Omega|].
\]
By construction it follows that
\begin{equation}\label{equimis}
|\{x\in\Omega \ : \ |v(x)|>t\}|=|\{s\in\mathbb{R} \ : \ \overline{v}(s)>t\}|,
\end{equation}
namely the function and its decreasing rearrangement are equimeasurable. We define also the \emph{maximal function} associated to $\overline{v}$ as
\[
\tilde{v}(s)=\frac{1}{s}\int_0^s\overline{v}(t)dt.
\]
Notice that, since $\overline{v}(s)$ is non increasing, it follows that $\overline{v}(s)\le \tilde{v}(s)$ for any $s\in[0,|\Omega|]$.\\
By definition $A(t)$ is right continuous and non increasing, while $\overline{v}(s)$ is left continuous and non increasing. Thus both functions are  almost everywhere differentiable in $(0,|\Omega|)$. For a more detailed treatment of $A(t)$ and $\overline{v}(s)$ we refer to \cite{Okl} and \cite{Gra}.\\

Let us give now the definition of Lorentz spaces. For $1\le m<\infty$ and $0<q\le\infty$ we say that a measurable functions $f:\Omega\to \mathbb{R}$ belongs to the Lorentz space $L^{m,q}(\Omega)$ if the quantity
\[
\| f \|_{L^{m,q}(\Omega)}=\begin{cases} \left(\int_0^{\infty}t^{\frac{q}{m}}\overline{f}(t)^{q}\frac{dt}{t}\right)^{\frac{1}{q}} &\mbox{if }   q<\infi\\ \sup_{t\in(0,\infty)} t^{\frac{1}{m}}\overline{f}(t)  &\mbox{if } q=\infi,\end{cases}
\]
is finite. We recall that $\lor{m}{m}=\elle m$ and that
\[
\lor{m}{q}\subset\lor{m}{r}  \ \ \ \mbox{for any} \ \ \ 0<q<r\le \infty.
\]
The space $\lor{m}{\infty}$, with $1\le m <\infty$ is called Marcinkiewicz space of order $m$ and we denote it by $\emme m$.\\
If we replace $\overline{f}$ with $\tilde{f}$, we define another space $L^{(m,q)}(\Omega)$ given by all the measurable function $f:\Omega\to \mathbb{R}$ such that the quantity
\[
\lceil f\rceil_{L^{(m,q)}(\Omega)}=\begin{cases} \left(\int_0^{\infty}t^{\frac{q}{m}}\tilde{f}(t)^{q}\frac{dt}{t}\right)^{\frac{1}{q}} &\mbox{if }   q<\infi\\ \sup_{t\in(0,\infty)} t^{\frac{1}{m}}\tilde{f}(t)  &\mbox{if } q=\infi\end{cases}
\]
is finite. Since
\begin{equation}\label{equivalent}
\| f \|_{L^{m,q}(\Omega)}\le \lceil f\rceil_{L^{(m,q)}(\Omega)}\le m' \| f \|_{L^{m,q}(\Omega)},
\end{equation}
it results that $\|\cdot\|_{L^{m,q}(\Omega)}$ and $\lceil \cdot\rceil_{L^{(m,q)}(\Omega)}$ are equivalent if $m>1$ and $\lor{m}{q}\equiv L^{(m,q)}(\Omega)$.  Anyway in the borderline case $m=1$ the space $L^{(1,q)}(\Omega)$ is rather unsatisfactory since, for $q<\infty$, it contains only the zero function. This is because by definition $\tilde{f}(s)\approx\frac{1}{s}$ for $s>|\Omega|$. Hence, following \cite{CB}, we define $\mathbb{L}^{1,q}(\Omega)$ as the set of measurable function $f$ such that 
\[
\|f\|_{\mathbb{L}^{1,q}(\Omega)}=\begin{cases} \left(\int_0^{|\Omega|}t^q\tilde{f}(t)^{q}\frac{dt}{t}\right)^{\frac{1}{q}} &\mbox{if }   q<\infi\\ \sup_{t\in(0,|\Omega|)} t\tilde{f}(t)  &\mbox{if } q=\infi,\end{cases}
\]
is finite. Notice that in \cite{CB} is proved that $f$ belongs to $\lorr11$ if and only if
\[
\int |f|\log(1+|f|)<\infty.
\]
Hence $\lorr{1}{1}\equiv L\log L(\Omega)$, while the space $\lorr{1}{q}$ with $1<q<\infty$ is a \emph{diagonal intermediate} space between $L\log L(\Omega)$ and $\elle1$ (see \cite{CB}).\\
 
Let us present now our first problem in its general form. Given $1<p<N$, consider
\begin{equation} \label{pdrift}
\begin{cases}
-\mbox{div}\left(a(x,\nabla u)\right)=-\mbox{div}\big(u|u|^{p-2} E(x)\big)+f(x) & \mbox{in $\Omega$,} \\
\hfill u = 0 \hfill & \mbox{on $\partial\Omega$,}
\end{cases}
\end{equation}
where the Carath\'eodory function $a:\Omega\times\mathbb{R}^{N}\rightarrow\mathbb{R^{N}}$
 satisfies for $0<\alpha,\beta$ 
\begin{equation} \label{ll5}
\begin{split}
\alpha&|\xi|^{p}\le \  a(x,\xi)\xi  ,\\
| a&(x,\xi) | \le \ \beta |\xi|^{p-1}, \\
[a&(x,\xi)-a(x,\xi^{*})][\xi-\xi^{*}]> \ 0, \ \ \ \mbox{if} \ \ \ \xi\neq\xi^*,
\end{split}
\end{equation}
the datum $f$ belongs to $\lor mq$ with $1\le m<\frac{N}{p}$, $0<q\le\infty$ and the vector field $E:\Omega\rightarrow\mathbb{R^{N}}$ is such that
\begin{equation}\label{marp}
E=\mathcal{F}+\mathcal{E} \ \ \mbox{ with } \ \ \mathcal{F}\in\left(\elle{\infty}\right)^N \ \ \mbox{ and } \ \ \overline{\mathcal{E}}(s)\le \frac{B}{s^{\frac{p-1}{N}}} \ \ \ \mbox{with}\ \ \  B<\alpha^{\frac1{p-1}}\omega_N^{\frac{1}{N}}\frac{N-pm}{(p-1)m}.
\end{equation}
Assumption \eqref{marp}, up to the addition of a whichever bounded vector filed, prescribes a threshold on the $\emme{N}$-norm of $E$ (see also \cite{BM} and \cite{B17}). As already said in the introduction, this smallness condition is sharp and cannot be weakened (see \cite{BO} and Remark \ref{nirvana}). To clarify the different notation between \eqref{EV} and \eqref{marp}, let us notice that $\|\mathcal{E}\|_{\emme N}\omega_N^{\frac1N}\le B$. \\
Let us introduce the distributional formulation of Problem \eqref{pdrift}.
\begin{multline}\label{distrpp}
\ \ \ \ \ \ \ \ \ \ \ u\in W^{1,1}_0(\Omega) \ :
\begin{split}
|\nabla u|^{p-1}\in\elle1, \ \ \ |u&|^{p-1}|E(x)|\in\elle1 \ \ \ \mbox{and}\\
\io a(x,\nabla u)\nabla\phi=\io |u|^{p-2}u&\, E(x)\nabla\phi+\io f(x)\phi \ \ \ \forall \ \phi\in C^{1}_0(\Omega).
\end{split} 
\end{multline}
Let us state the first result of this section.
\begin{teo}\label{teohighp}
Let us assume $f\in L^{m,q}(\Omega)$ and that conditions \eqref{ll5} and \eqref{marp} hold true.\\ (i) If $\max\{1,\frac{N}{N(p-1)+1}\}<m<\left(p^*\right)'$ and $0<q\le\infi$, then there exists $u$ solution of \eqref{distrpp} such that
\[
|u|\in L^{\frac{(p-1)Nm}{N-mp},(p-1)q}(\Omega) \ \ \ \mbox{and} \ \ \  |\nabla u|\in L^{\frac{(p-1)Nm}{N-m},(p-1)q}(\Omega).
\]
(ii) If $\left(p^*\right)'<m<\frac{N}{p}$ and $0<q\le\infi$
\[
|u|\in L^{\frac{(p-1)Nm}{N-mp},(p-1)q}(\Omega)\cap W^{1,p}_0(\Omega).
\]
\end{teo}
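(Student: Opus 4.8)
The plan is to proceed by approximation: truncate the datum and regularize the vector field, solve the coercive approximate problems, derive uniform a priori estimates via rearrangement (symmetrization), and pass to the limit. First I would fix $n\in\N$ and replace $f$ by $f_n=T_n(f)$ (truncation) and $E$ by $E_n$ obtained from $\mathcal{F}+T_n(\mathcal{E})$, so that the approximate equation
\[
-\diver(a(x,\nabla u_n))=-\diver(|u_n|^{p-2}u_n E_n)+f_n \quad\text{in }\Omega,\qquad u_n=0\text{ on }\partial\Omega,
\]
has a weak solution $u_n\in\wp\cap\lio$ by standard monotone-operator theory (the approximate operator is coercive because $E_n$ is bounded and the growth of the truncated convection term is controlled). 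The core of the argument is the a priori estimate. Testing the approximate equation with a suitable function of $u_n$ — namely $T_k\big(G_t(u_n)\big)$ type test functions, or directly exploiting the coarea formula on level sets $\{|u_n|>t\}$ — and using the ellipticity \eqref{ll5}, the isoperimetric inequality, and the Hardy--Littlewood inequality $\int_{\{|u_n|>t\}}|u_n|^{p-1}|E_n|\le \int_0^{A_n(t)}\overline{|u_n|^{p-1}}(s)\,\overline{|E_n|}(s)\,ds$, I would derive a differential inequality for the distribution function $A_n(t)=|\{|u_n|>t\}|$. Schematically, writing $\mu=A_n(t)$, one gets
\[
1\le -\frac{d}{dt}\Big(\alpha\, N^p\omega_N^{p/N}\mu^{p-1-p/N'}(-A_n'(t))^{p-1}\Big)^{-1}\ \ \text{(up to constants)}
\]
corrected by a term $\dfrac{B(p-1)}{\omega_N^{(p-1)/N}}\,\mu^{-(p-1)/N}\,\overline{u_n}(\mu)^{p-1}$ coming from $\overline{\mathcal{E}}$, plus a bounded contribution from $\mathcal{F}$. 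The smallness condition $B<\alpha^{1/(p-1)}\omega_N^{1/N}\frac{N-pm}{(p-1)m}$ is exactly what makes the resulting ODE for $\overline{u_n}$ (after integrating in $t$ and using $\overline{|\nabla u_n|}$ estimates) have the right homogeneity so that the dangerous term can be absorbed; this yields the pointwise bound $\overline{u_n}(s)\le C\, s^{-\frac{N-pm}{(p-1)Nm}}$-type decay with a tail governed by $\|f\|_{L^{m,q}}$, hence $u_n$ bounded in $L^{\frac{(p-1)Nm}{N-mp},(p-1)q}(\Omega)$, and in parallel $|\nabla u_n|$ bounded in $L^{\frac{(p-1)Nm}{N-m},(p-1)q}(\Omega)$ in case (i), or in $L^p$ in case (ii) since then $\frac{(p-1)Nm}{N-m}\ge p$.

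With the uniform estimates in hand, I would extract a subsequence: in case (ii), $u_n\rightharpoonup u$ weakly in $\wp$, and the $\lio$-free almost-everywhere convergence of gradients is obtained by the standard trick of testing with $T_\delta(u_n-u)$ (or $T_\delta(u_n-u_m)$) and using the strict monotonicity in \eqref{ll5} to conclude $\nabla u_n\to\nabla u$ a.e., which upgrades to the needed convergences $a(x,\nabla u_n)\rightharpoonup a(x,\nabla u)$ and $|u_n|^{p-2}u_n E_n\to |u|^{p-2}u\,E$ in $L^1$ (here the Lorentz bound on $u_n$ together with $E\in(\emme N)^N$ gives equi-integrability via the sharp Sobolev embedding $\wp\subset \lor{p^*}{p}$ quoted in the introduction). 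In case (i) the solution lies only in $\w1$: here I would instead establish that $\{|\nabla u_n|^{p-1}\}$ and $\{\nabla u_n\}$ are equi-integrable from the Lorentz estimate $|\nabla u_n|\in L^{\frac{(p-1)Nm}{N-m},(p-1)q}$ with $\frac{(p-1)Nm}{N-m}>1$ when $m>\frac{N}{N(p-1)+1}$, then prove a.e. convergence of gradients by the Boccardo--Gallou\"et--type argument adapted to $W^{1,1}_0$ solutions (using $T_\delta(u_n-u)$ but with careful handling since the limit is not in $\wp$), and finally pass to the limit in the distributional formulation against $\phi\in C^1_0(\Omega)$.

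The main obstacle I anticipate is twofold. First, closing the a priori estimate: the interplay between the rearrangement differential inequality and the perturbative term $\overline{\mathcal{E}}(s)\overline{u_n}(s)^{p-1}$ is delicate, and the precise constant $\alpha^{1/(p-1)}\omega_N^{1/N}\frac{N-pm}{(p-1)m}$ must emerge from balancing the homogeneity in $s$ — this is where I expect the real work, and it is presumably what the author means by "the estimate for $\overline{|\nabla u|}$ is new." Second, in case (i), the passage to the limit in $W^{1,1}_0(\Omega)$: since bounded sequences in $W^{1,1}_0$ are not weakly compact, one cannot argue by weak convergence alone, and the a.e. convergence of the gradients must be obtained by hand (via the truncation-testing scheme, exploiting that the Lorentz exponent $\frac{(p-1)Nm}{N-m}$ is strictly above $1$ precisely under the hypothesis $m>\max\{1,\frac{N}{N(p-1)+1}\}$, which rules out the borderline $\w1$-only situation). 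Once a.e. gradient convergence and the $L^1$-equi-integrability of the three terms are secured, Vitali's theorem finishes the limit passage and Fatou (applied to the rearranged quantities) transfers the uniform Lorentz bounds to $u$ and $|\nabla u|$.
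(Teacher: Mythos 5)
Your overall architecture (approximate problems, a rearrangement differential inequality obtained from level--set testing, Lorentz a priori bounds, a.e.\ convergence of gradients plus equi-integrability, Vitali and Fatou) coincides with the paper's, but two steps are missing or wrong as stated. First, the existence of the approximations: truncating $E$ alone does not restore coercivity --- a bounded field can have arbitrarily large $L^N$ norm, and the convection term $\int_\Omega |u|^{p-2}u\,E_n\cdot\nabla u$ is of the same order as $\|\nabla u\|^p_{L^p(\Omega)}$, so ``standard monotone-operator theory'' does not apply to your approximate equation. The paper damps the $u$-dependence as well, replacing $|u|^{p-2}u$ by $|u_n|^{p-2}u_n/(1+\tfrac1n|u_n|^{p-1})$ in \eqref{apprp}, which makes the lower order term bounded; some such device (or an appeal to nontrivial noncoercive existence results, which would itself need justification for the nonlinear operator) is required.

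Second, and more seriously, the core of part (i) --- the pointwise bound on the decreasing rearrangement of $|\nabla u_n|$ --- is only announced (``in parallel'', ``where I expect the real work'') but never derived. The level--set inequality you obtain controls $-\frac{d}{dk}\int_{\{|u_n|>k\}}|\nabla u_n|^p$, i.e.\ quantities organized by the level sets of $u_n$, whereas the Lorentz norm of the gradient is governed by the rearrangement of $|\nabla u_n|$, organized by its own level sets; bridging the two is precisely the new content of the paper, Lemma \ref{regdup}. There one uses the measure-preserving family $\widetilde{\Omega}_n(s)$ of Lemma \ref{16-8} associated with $|\nabla u_n|$, splits $\int_0^s\overline{|\nabla u_n|}^{p-1}$ into the part where $|u_n|>\overline{u}_n(s)$ (differentiated via Proposition \ref{09-10} and \eqref{composition}) and the part where $|u_n|\le\overline{u}_n(s)$ (H\"older plus integration of \eqref{12-02p} from $s$ to $|\Omega|$), with the pseudo-rearrangements of Lemmas \ref{dife} and \ref{inl} handling $E$. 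Without this (or an equivalent device \`a la Alvino--Ferone--Trombetti) your scheme yields at best Marcinkiewicz/Lebesgue information on the gradient, not the claimed $L^{\frac{(p-1)Nm}{N-m},(p-1)q}(\Omega)$ bound, which is the main assertion of (i). Relatedly, in case (ii) you deduce the $W^{1,p}_0(\Omega)$ bound from the gradient Lorentz estimate ``since $\frac{(p-1)Nm}{N-m}\ge p$'', but that estimate is established only for $m<(p^*)'$; the paper instead obtains the energy bound by taking $u_n$ itself as test function and combining H\"older's inequality with the $L^q$ bound on $u_n$, $p^*<q<[(p-1)m^*]^*$, coming from Lemma \ref{regup}.
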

As already said in the introduction the main novelty of this Theorem is the first part (see \cite{BM} for similar result in the case $\left(p^*\right)'<m<\frac{N}{p}$ and $0<q\le\infi$) and the core of the proof relies on the estimate on the decreasing rearrangement of the gradient provided in Lemma \ref{regdup} of Section \ref{proofof}.\\ 
In order to present the second result of this section, let us recall that, if $p$ and $m$ are close to $1$, some subtleties arise (see \cite{BBBB} for the case $E\equiv0$). Roughly speaking this is because the gradient of the expected solution might not be an integrable function. Indeed, if $1<p<2-\frac1N$ and $1<m<\frac{N}{N(p-1)+1}$, the notion of distributional solution is not any more adequate and entropy solutions have to be introduced (see for instance \cite{B17}). We do not treat this case and instead focus on the bordeline values $m=\max\big\{1,\frac{N}{N(p-1)+1}\big\}$. In the cases $f\in\elle1$ or $f\in \lorr{1}{q}$, we assume $m=1$ in \eqref{marp}.
\begin{teo}\label{teolowp}
Let us assume $m=\max\{1,\frac{N}{N(p-1)+1}\}$ and that conditions \eqref{ll5} and \eqref{marp} hold true. Hence there exists $u$ solution of \eqref{distrpp}. Moreover\\ (i) if $p>2-\frac{1}{N}$ and $f\in\elle1$, then 
\[
|u|\in L^{\frac{(p-1)N}{N-p},\infi}(\Omega) \ \ \ \mbox{and} \ \ \  |\nabla u|\in L^{\frac{(p-1)N}{N-1},\infi}(\Omega);
\]
(ii) if $p>2-\frac{1}{N}$ and $f\in\lorr{1}{q}$ with $0< q\le\infty$, then
\[
|u|\in L^{\frac{(p-1)N}{N-p}, (p-1)q}(\Omega) \ \ \ \mbox{and} \ \ \  |\nabla u|\in L^{\frac{(p-1)N}{N-1},(p-1)q}(\Omega);
\]
(iii) if $p=2-\frac{1}{N}$ and $f\in\mathbb{L}^{1,q}(\Omega)$ with $0< q\le\frac{1}{p-1}=\frac{N}{N-1}$, then 
\[
|u|\in L^{\frac{N}{N-1}, \frac{N-1}{N}q}(\Omega) \ \ \ \mbox{and} \ \ \  |\nabla u|\in L^{1,(p-1)q}(\Omega);
\]
(iv) if $p<2-\frac{1}{N}$ and $f\in\lor{m}{q}$ with $m=\frac{N}{N(p-1)+1}$ $0< q\le\frac{1}{p-1}$, then 
\[
|u|\in L^{\frac{N}{N-1},(p-1)q}(\Omega) \ \ \ \mbox{and} \ \ \  |\nabla u|\in L^{1,(p-1)q}(\Omega).
\]
\end{teo}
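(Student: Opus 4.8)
The plan is to treat Theorem~\ref{teolowp} as the borderline (limit) case of Theorem~\ref{teohighp}, obtaining the solution as the limit of solutions to approximating problems and then chasing the endpoint Lorentz estimates through the pointwise rearrangement inequalities. First I would set up the approximation scheme: replace the datum $f$ by a truncated sequence $f_n = T_n(f)$ (or a smooth $L^\infty$ approximation with $\|f_n\|$ controlled in the relevant Lorentz norm), replace the first order term by a bounded one, say $u|u|^{p-2}E(x)$ replaced by $T_n(u)|T_n(u)|^{p-2}E_n(x)$ with $E_n$ bounded and converging to $E$, so that each approximate problem admits a weak solution $u_n \in W^{1,p}_0(\Omega)$ by standard pseudomonotone operator theory (Leray--Lions plus a fixed point / Schauder argument to handle the non coercive lower order term, exactly as in the proof of Theorem~\ref{teohighp}).

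The heart of the matter is the a~priori estimate, and here I would re-run the rearrangement machinery that underlies Lemma~\ref{regdup}. Using $T_k(G_h(u_n))$-type test functions (truncations composed with level-set cutoffs) together with the coarea formula and the isoperimetric inequality, one derives a differential inequality for $\overline{u_n}(s)$ and for the truncated-energy function; the smallness condition $B < \alpha^{1/(p-1)}\omega_N^{1/N}\frac{N-pm}{(p-1)m}$ is precisely what keeps the coefficient of the ``bad'' term strictly below the coercivity constant, so the inequality can be integrated. In the four sub-cases one substitutes the corresponding endpoint exponent: $m=1$ with $f\in L^1$ gives only the Marcinkiewicz ($q=\infty$) conclusion because the primitive of $f$ produces no extra Lorentz gain; $f\in\mathbb{L}^{1,q}$ replaces $L^1$ by $L\log L$-type control and the $\tilde f$ in the $\mathbb{L}^{1,q}$ norm is exactly what survives the integration of the differential inequality, yielding the weighted exponent $(p-1)q$; and when $p\le 2-\frac1N$, $m=\frac{N}{N(p-1)+1}$ is the threshold at which $\frac{Nm}{N-m}=1$, so the gradient lands in $L^{1,(p-1)q}$ and one must additionally check that $|\nabla u|^{p-1}\in L^1$ (and $|u|^{p-1}|E|\in L^1$) so that the distributional formulation \eqref{distrpp} still makes sense. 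The constraint $q\le \frac{1}{p-1}$ in (iii)--(iv) is what guarantees this integrability of the gradient power.

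Next I would pass to the limit. From the uniform bounds, $|\nabla u_n|^{p-1}$ is bounded in a Lorentz space embedding into $L^1$, so up to subsequence $u_n\to u$ a.e.\ and weakly in a suitable space; the delicate point (flagged already in the introduction as ``an additional difficulty due to the lack of compactness of bounded sequences in $W^{1,1}_0$'') is to upgrade this to a.e.\ convergence of the gradients, which I would get by the now-standard Boccardo--Murat argument (testing the difference of equations with $T_\delta(u_n-u)$, using the strict monotonicity in \eqref{ll5}) adapted to the low-summability setting as in \cite{B17}; once $\nabla u_n\to\nabla u$ a.e., Vitali/Fatou and the equi-integrability coming from the Lorentz bound let me pass to the limit in $a(x,\nabla u_n)$, in $|u_n|^{p-2}u_n E_n$, and in $f_n$, recovering \eqref{distrpp}. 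Finally, lower semicontinuity of the Lorentz (quasi)norms under a.e.\ convergence transfers the a~priori estimates to $u$ itself.

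The main obstacle I expect is the endpoint bookkeeping in the differential inequality — making sure that at $m=\max\{1,\frac{N}{N(p-1)+1}\}$ one extracts exactly the stated Lorentz exponents and no loss, in particular handling the $\mathbb{L}^{1,q}$ case where one must work with $\tilde f$ rather than $\overline f$ and respect the range $q\le\frac{1}{p-1}$, and simultaneously guaranteeing $|\nabla u|^{p-1}\in L^1$ so that the problem is well posed in the distributional sense. The compactness of the gradients in the $W^{1,1}_0$ regime is the other genuinely technical point, but it is by now routine given the uniform Lorentz bound, whereas the sharp endpoint integration is where the smallness constant in \eqref{marp} must be used with care.
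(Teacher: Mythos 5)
Your overall strategy---approximation as in \eqref{apprp}, pointwise rearrangement estimates for $\overline{u}_n$ and $\overline{|\nabla u_n|}$ leading to the Lorentz bounds of Lemma \ref{dupap}, almost everywhere convergence of the gradients, passage to the limit, and lower semicontinuity of rearrangements (Proposition \ref{29-10})---is the same as the paper's. In cases (i)--(ii) your limiting argument is fine, since there $|\nabla u_n|$ is bounded in a Lorentz space with first exponent $\frac{(p-1)N}{N-1}>1$ and weak $W^{1,r}_0(\Omega)$ compactness with $r>1$ is available.

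The gap is in the endpoint cases (iii)--(iv), which are the genuinely new part of the statement. There the uniform bound on $\{|\nabla u_n|\}$ is only in $\lor{1}{(p-1)q}$, and your claims that the needed equi-integrability ``comes from the Lorentz bound'' and that compactness of the gradients ``is by now routine given the uniform Lorentz bound'' are not correct: a sequence bounded in $\lor{1}{r}$, even for $r<1$, need not be equi-integrable nor weakly compact in $\elle{1}$ (take $g_n=n\chi_{A_n}$ with $|A_n|=1/n$; then $\|g_n\|_{\lor{1}{r}}^r=1/r$ for every $n$, yet $\int_{A_n}g_n=1$). Equi-integrability of $|\nabla u_n|^{p-1}$ does follow from the bound (its first exponent is $\frac{1}{p-1}>1$), and this handles $a(x,\nabla u_n)$ and the convection term, but it does not give you that the a.e.\ limit of $\nabla u_n$ is the weak gradient of a function $u\in W^{1,1}_0(\Omega)$: a priori the limit of $\{u_n\}$ could be merely of bounded variation, whereas \eqref{distrpp} requires $u\in W^{1,1}_0(\Omega)$, and checking a posteriori that $|\nabla u|^{p-1}\in\elle{1}$ does not repair this. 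The paper closes exactly this point by proving equi-integrability of $\{|\nabla u_n|\}$ itself from the $n$-independent pointwise majorant in \eqref{10:01}, whose $\frac{N}{N-1}$ (resp.\ $\frac{1}{p-1}$) power lies in $L^1(0,|\Omega|)$ precisely because $f\in\lorr{1}{q}$ with $q\le\frac{N}{N-1}$ (resp.\ $f\in\lor{m}{q}$ with $q\le\frac{1}{p-1}$); then Dunford--Pettis yields $\nabla u_n\rightharpoonup L$ in $\left(\elle{1}\right)^N$, and the identity \eqref{luth} together with the strong $\elle{1}$ convergence of $u_n$ identifies $L=\nabla u$, giving $u\in W^{1,1}_0(\Omega)$; only then is the a.e.\ convergence of the gradients (Lemma \ref{22-3}, whose hypothesis is a weak $W^{1,s}_0(\Omega)$ limit) invoked and upgraded, via Vitali, to strong $\elle{1}$ convergence. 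This majorant-based equi-integrability plus Dunford--Pettis and identification step is missing from your proposal and cannot be replaced by the norm bound alone, although the pointwise estimates you plan to re-derive do contain the information needed to carry it out.
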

The main observation on Theorems \ref{teohighp} and \ref{teolowp} is that, also in this nonlinear Lorentz setting, we recover the same results of the case $E\equiv0$ (see \cite{AFT}, \cite{BG}, \cite{Min} and reference therein). Let us further comment Theorem \ref{teolowp}.
In $(i)$ and $(ii)$ the summability of the data assures that $|\nabla u|$ belongs to a Lebesgue space smaller (more regular) than $\elle 1$. On the contrary, in \emph{(iii)} and \emph{(iv)}, the gradient belongs to Lorentz spaces with first exponent equal to $1$. Such spaces are contained at most in $\elle1$ and this makes more difficult the proof since $\elle1$ is not reflexive (we refer to \cite{BG} for corresponding results restricted to the Lebesgue framework with $E\equiv0$).\\

Finally let us focus on nonlinear drift term. Let us consider, for $p>1$,
\begin{equation} \label{!!modelcase}
\begin{cases}
-\mbox{div}\left(a(x,\nabla w)\right)=E(x)|\nabla w|^{p-2}\nabla w +f(x) & \mbox{in $\Omega$,} \\
\hfill u = 0 \hfill & \mbox{on $\partial\Omega$,}
\end{cases}
\end{equation}
where the Carath\'eodory function \emph{a} : $\Omega\times\mathbb{R}^{N}\rightarrow\mathbb{R^{N}}$ satisfies \eqref{ll5}, the datum $f$ belongs to $\lor mq$ with $1\le m<\frac{N}{p}$, $0<q\le\infty$ and the vector field $E:\Omega\rightarrow\mathbb{R^{N}}$ is such that there
\begin{equation}\label{!!marbis}
E=\mathcal{F}+\mathcal{E} \ \ \mbox{ with } \ \ \mathcal{F}\in\left(\elle{\infty}\right)^N \ \ \mbox{ and } \ \ \overline{\mathcal{E}}(s)\le \frac{B}{s^{\frac{1}{N}}} \ \ \ \mbox{with}\ \ \  B<\alpha \omega_N^{\frac{1}{N}} N \frac{m-1}{m}.
\end{equation}
Let us recall again that $\|\mathcal{E}\|_{\emme N}\omega_N^{\frac1N}\le B$. It is immediate to note that this assumption becomes more and more restrictive as $m$ approaches $1$. This is not just a technical inconvenient and prevent us to treat the case $f$ in $\elle1$ or $\lorr{1}{q}$ with $1<q<\infty$. Indeed, for such type of data and assuming \eqref{!!marbis}, the expected regularity of the gradient is too low to have the drift term of \eqref{!!modelcase} well defined (we refer the interested reader to \cite{BMMP}).
We consider the following weak formulation of problem \eqref{!!modelcase}.
\begin{multline}\label{!!pdriftw}
\ \ \ \ \ \ \ \ \ \ \ u\in W^{1,1}_0(\Omega) \ :
\begin{split}
|\nabla u|^{p-1}\in\elle1, \ \ \ |E(x)||\nabla u&|^{p-1}\in\elle1 \ \ \ \mbox{and}\\
\io a(x,\nabla u)\nabla\phi=\io E(x)|\nabla &u|^{p-2}\nabla u  \phi+\io f(x)\phi \ \ \ \forall \ \phi\in C^{1}_0(\Omega).
\end{split}
\end{multline}
Let us state the existence and regularity result for problem \eqref{!!pdriftw}.
\begin{teo}\label{!!teohighp}
Let us assume $f\in L^{m,q}(\Omega)$ and that conditions \eqref{ll5} and \eqref{!!marbis} hold true.\\
(i) If $\max\{1,\frac{N}{N(p-1)+1}\}<m<\left(p^*\right)'$ and $0<q\le\infi$, then there exist $u$ solution of \eqref{!!pdriftw} such that
\[
|u|\in L^{\frac{(p-1)Nm}{N-pm},(p-1)q}(\Omega) \ \ \ \mbox{and} \ \ \  |\nabla u|\in L^{\frac{(p-1)Nm}{N-m},(p-1)q}(\Omega).
\]
(ii) If $\left(p^*\right)'<m<\frac{N}{p}$ and $0<q\le\infi$
\[
|w|\in L^{\frac{(p-1)Nm}{N-mp},(p-1)q}(\Omega)\cap W^{1,p}_0(\Omega).
\]
\end{teo}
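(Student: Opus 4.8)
\textbf{Proof proposal for Theorem \ref{!!teohighp}.}

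The plan is to treat the nonlinear drift problem \eqref{!!pdriftw} in parallel with the convection problem \eqref{pdrift}, exploiting the fact that the two first order terms are (formally) dual, so that essentially the same machinery applies after swapping the roles of the solution and the test functions in the energy estimates. First I would set up an approximation scheme: replace $f$ by truncations $f_n = T_n(f)$ and $E$ by $E_n = T_n(E)$ (componentwise, or by truncating $|E|$), so that $f_n \in \lio$ and $E_n \in (\lio)^N$. For fixed $n$ the operator $w \mapsto -\diver(a(x,\nabla w)) - E_n|\nabla w|^{p-2}\nabla w$ is a bounded, pseudo-monotone perturbation of a Leray--Lions operator, and the lower order term is subcritical, so the existence of $u_n \in \wp$ solving the approximate problem follows from standard surjectivity results for pseudo-monotone operators (Leray--Lions / Brezis). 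The real content is the a priori estimates that are uniform in $n$.

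Next I would derive the pointwise estimate for the decreasing rearrangement $\overline{u_n}$. Testing the approximate equation with $T_k(G_h(u_n))$-type functions and using the coarea formula and the isoperimetric inequality in the usual P\'olya--Szeg\H{o}/Talenti fashion, one arrives at a differential inequality for $\overline{u_n}(s)$ in which the drift term contributes, after H\"older in Lorentz spaces and the bound $\overline{\mathcal{E}}(s) \le B s^{-1/N}$, a term of the form $\frac{B}{\alpha^{1/(p-1)} \omega_N^{1/N} N} \, s^{-1/N} \cdot (\text{something involving } \overline{u_n}')$. The smallness assumption \eqref{!!marbis}, namely $B < \alpha \omega_N^{1/N} N \frac{m-1}{m}$, is exactly what is needed to absorb this contribution into the principal part of the inequality while leaving a coefficient compatible with the datum $f \in \lor mq$; integrating the resulting ODE and using the equivalence \eqref{equivalent} between $\|\cdot\|_{\lor mq}$ and $\lceil\cdot\rceil_{L^{(m,q)}(\Omega)}$ yields $u_n \in \lor{\frac{(p-1)Nm}{N-pm}}{(p-1)q}$ uniformly. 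In case (ii), $(p^*)' < m < N/p$, the same estimate together with a straightforward energy estimate (testing with $u_n$ itself and absorbing the drift term via the smallness condition) gives in addition a uniform bound in $\wp$.

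For part (i) the harder and genuinely new estimate is the one for $\overline{|\nabla u_n|}$, and here I would invoke the rearrangement-of-the-gradient argument of Lemma \ref{regdup}: one estimates the rearrangement of $|\nabla u_n|$ on level sets $\{k < |u_n| < k+h\}$ by combining the differential inequality for $\overline{u_n}$ with a local energy estimate on these "slices", controlling the drift contribution again by \eqref{!!marbis}. This produces the bound $|\nabla u_n| \in \lor{\frac{(p-1)Nm}{N-m}}{(p-1)q}$ uniformly in $n$. The main obstacle is the passage to the limit: since we only control $|\nabla u_n|^{p-1}$ in $\elle1$ (and in a Lorentz space close to $\elle1$ when $m$ is near $(p^*)'$), one must prove almost everywhere convergence of $\nabla u_n$ to recover $a(x,\nabla u_n) \to a(x,\nabla u)$ and to pass to the limit in the drift term $E_n |\nabla u_n|^{p-2}\nabla u_n$. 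I would obtain this by the by-now classical technique of testing the difference of equations with $T_\delta(u_n - u)$ (or a truncation thereof), using the strict monotonicity in \eqref{ll5} to deduce that $\nabla T_k(u_n) \to \nabla T_k(u)$ strongly in $(L^p)^N$ for every $k$, hence $\nabla u_n \to \nabla u$ a.e.; the uniform Lorentz bound on $|\nabla u_n|$ then upgrades this to convergence in $(\elle1)^N$ of $|\nabla u_n|^{p-1}$ by Vitali, and equi-integrability of $|E_n||\nabla u_n|^{p-1}$ (again from the Lorentz bound and $\mathcal{E} \in (\emme N)^N$, $\mathcal{F} \in (\lio)^N$) lets one pass to the limit in the drift term. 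Finally, lower semicontinuity of the Lorentz norms under a.e. convergence transfers the uniform estimates to the limit $u$, completing both parts.
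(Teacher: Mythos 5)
Your skeleton is the paper's: approximate, derive pointwise estimates for the rearrangements of $\wn$ and $|\nabla\wn|$, convert them into uniform Lorentz bounds, obtain a.e. convergence of the gradients by monotonicity, and pass to the limit using equi-integrability of the drift term plus lower semicontinuity of rearrangements (Proposition \ref{29-10}). However, two steps as you state them do not work. First, the approximation: truncating only $E$ and $f$ leaves, for fixed $n$, the problem $-\diver a(x,\nabla w)=E_n(x)|\nabla w|^{p-2}\nabla w+f_n$, which is still \emph{not coercive}: testing with $w$, the drift contribution is bounded only by $\|E_n\|_{\elle\infty}\|\nabla w\|^{p-1}_{\elle p}\|w\|_{\elle p}\le C\|E_n\|_{\elle\infty}\|\nabla w\|^{p}_{\elle p}$, and $\|E_n\|_{\elle\infty}\sim n$ is large, so the Leray--Lions surjectivity theorem you invoke does not apply. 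The paper truncates the gradient nonlinearity itself, replacing $|\nabla w|^{p-2}\nabla w$ by $\frac{|\nabla\wn|^{p-2}\nabla\wn}{1+\frac{1}{n}|\nabla\wn|^{p-1}}$ as in \eqref{!!appr1}; the lower order term is then bounded, coercivity is restored and \cite{ll} applies. Second, in case (ii) you propose to close the energy estimate by ``absorbing the drift term via the smallness condition''. This fails: \eqref{!!marbis} only imposes $B<\alpha\omega_N^{1/N}N\frac{m-1}{m}$, so for $m$ close to $N/p$ it allows $\|\mathcal{E}\|_{\emme N}$ up to about $\alpha(N-p)$, in general well above any threshold of the type $C_{N,p}\|\mathcal{E}\|_{\emme N}<\alpha$ that a H\"older--Sobolev absorption of $\io|\mathcal{E}||\nabla\wn|^{p-1}|\wn|$ would require. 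The paper's route is different: Lemma \ref{!!dupa1} first gives boundedness of $\{\wn\}$ in $\elle q$ for some $q>p^*$, whence $\io|E|^p|\wn|^p\le C$; then testing with $\wn$ yields $\alpha\io|\nabla\wn|^p\le C\big(\io|\nabla\wn|^p\big)^{1/p'}+C\big(\io|\nabla\wn|^p\big)^{1/p}$, which closes because the right-hand side is sublinear in the energy — no absorption, hence no extra smallness, is needed.

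Two further points where your sketch underestimates the drift-specific difficulties. The drift term on a level set is $\int_{\{|\wn|>k\}}|E||\nabla\wn|^{p-1}$, i.e.\ it involves the gradient on the \emph{whole} superlevel set, not just on the slice $\{k<|\wn|<k+h\}$ as in the convection case; therefore Talenti's inequality does not directly give a closed ODE for $\overline{w}_n$. Lemma \ref{sgo} first obtains an integral inequality in which the unknown level-set energy appears under an integral from $k$ to $+\infty$ and resolves it with the Gronwall-type Lemma \ref{gron}; the resulting exponential factor produces the weights $s^{\pm B/(\alpha\sigma_N)}$ in \eqref{05-10}--\eqref{05-10bis}, and the smallness \eqref{!!marbis} is then used not to absorb anything in the differential inequality but through the exponent condition $\delta=\frac1m+\frac{B}{\alpha\sigma_N}<1$ required by Lemma \ref{trickint} in Lemma \ref{!!dupa1} (this is also why $m=1$ data are excluded here, unlike the convection case). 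Finally, in the compactness step $u$ is not an admissible test function (it only lies in $W^{1,r}_0(\Omega)$ with $r<p$), so one must test with $T_h(\wn-T_k(w))$, with $T_k(w)\in W^{1,p}_0(\Omega)$, exactly as in Lemma \ref{aenodiv}; this is presumably your ``truncation thereof'', and the rest of your limit passage (equi-integrability of $|E_n||\nabla\wn|^{p-1}$ from the Marcinkiewicz bounds and lower semicontinuity of rearrangements) matches the paper.
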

Schematically the strategy of the proof of Theorems \ref{teohighp}, \ref{teolowp} and \ref{!!teohighp} consists of the following steps:
\begin{itemize}
\item finding suitable sequence of approximating solutions $\{\un\}$ and $\{\wn\}$ for problem \eqref{distrpp} and \eqref{!!pdriftw} respectively;
\item a priori estimates for the sequences $\{\un\}$ and $\{\wn\}$ in the required Lorentz spaces;
\item existence of a converging subsequences to weak limits $u$ and $v$;
\item passage to the limit as $n\to\infty$ to prove that $u$ and $v$ are indeed solutions of the initial problems.
\end{itemize}
The first step is obtained \emph{truncating} problems \eqref{distrpp} and \eqref{!!pdriftw}. Indeed thanks to \cite{ll}, for any $n\in\mathbb{N}$ we infer the existence of $\un\in W^{1,p}_0(\Omega)$ and $\wn\in  W^{1,p}_0(\Omega)$ that solve 
\begin{equation}\label{apprp}
\io a(x,\nabla \un)\nabla\phi=\io \frac{|\un|^{p-2}\un}{1+\frac{1}{n}|\un|^{p-1}} E_n(x)\nabla\phi+\io \fn(x)\phi \ \ \ \forall \ \phi\in W^{1,p}_0(\Omega)
\end{equation}
and
\begin{equation}\label{!!appr1}
\io a(x,\nabla\wn)\varphi=\io E_n(x)\frac{|\nabla \wn|^{p-2}\nabla\wn}{1+\frac{1}{n}|\nabla\wn|^{p-1}}\varphi+\io \fn(x)\varphi \ \ \ \forall \ \varphi\in W^{1,p}_0(\Omega),
\end{equation}
respectively, where $E_n(x)$ and $f_n(x)$ are the truncation at level $n\in\mathbb{N}$ of $E(x)$ and $f(x)$.\\ The others steps are obtained in Section \ref{proofof}, while in the following one we provide some preliminary result.
\section{Preliminaries}\label{unoo}

In this section we introduce some preliminary results and tools in order to deal with problems with convection or drift lower order term. In Section \ref{28-10} we give the basic background on the symmetrization technique for elliptic problems introduced in the seminal paper \cite{T}. In Section \ref{mala} we prove the \emph{almost everywhere convergence of the gradients} for the approximating sequences $\{\un\}$ and $\{\wn\}$.
\subsection{Background on symmetrization techniques}\label{28-10}
\begin{prop}\label{29-10}
For $n\in \mathbb{N}$, let $v,v_n:\Omega\to\mathbb{R}$ be measurable functions such that
\[
|v(x)|\le \liminf_{n\to\infty}|v_n(x)| \ \ \ a.e. \ x\in \Omega.
\]
Hence
\[
\overline{v}(s)\le \liminf_{n\to\infty}\overline{v}_n(s) \ \ \ a.e. \ s\in(0,\Omega).
\]
\end{prop}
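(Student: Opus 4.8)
The plan is to use the variational characterization of the decreasing rearrangement together with Fatou's lemma. First I would recall that for any measurable function $g$ and any $t \geq 0$, the distribution function can be written as $|\{|g|>t\}| = \int_\Omega \chi_{\{|g|>t\}}(x)\,dx$, and that $\overline{g}(s) \leq t$ if and only if $|\{|g|>t\}| \leq s$ (using left-continuity and monotonicity of $\overline{g}$, together with \eqref{equimis}). So the strategy is to fix $s \in (0,|\Omega|)$, set $\ell := \liminf_n \overline{v}_n(s)$, and show $\overline{v}(s) \leq \ell$. If $\ell = +\infty$ there is nothing to prove, so assume $\ell < \infty$ and fix an arbitrary $\eta > 0$; it suffices to prove $\overline{v}(s) \leq \ell + \eta$, i.e.\ $|\{|v| > \ell+\eta\}| \leq s$.

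The key step is a pointwise comparison of indicator functions under the hypothesis. Passing to a subsequence (not relabeled) along which $\overline{v}_n(s) \to \ell$, we have $\overline{v}_n(s) \leq \ell + \eta$ for all $n$ large, hence $|\{|v_n| > \ell+\eta\}| \leq s$ for all $n$ large. Now I claim that
\[
\chi_{\{|v| > \ell+\eta\}}(x) \leq \liminf_{n\to\infty} \chi_{\{|v_n| > \ell+\eta\}}(x) \qquad \text{a.e. } x \in \Omega.
\]
Indeed, at a point $x$ where $|v(x)| > \ell+\eta$ and where the hypothesis $|v(x)| \leq \liminf_n |v_n(x)|$ holds (a.e.\ such $x$), we get $\liminf_n |v_n(x)| > \ell+\eta$, so $|v_n(x)| > \ell+\eta$ for all $n$ sufficiently large, which forces the right-hand side to equal $1$; at points where $|v(x)| \leq \ell+\eta$ the left-hand side is $0$ and the inequality is trivial. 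Then Fatou's lemma gives
\[
|\{|v| > \ell+\eta\}| = \int_\Omega \chi_{\{|v|>\ell+\eta\}} \leq \int_\Omega \liminf_{n\to\infty} \chi_{\{|v_n|>\ell+\eta\}} \leq \liminf_{n\to\infty} \int_\Omega \chi_{\{|v_n|>\ell+\eta\}} = \liminf_{n\to\infty} |\{|v_n|>\ell+\eta\}| \leq s.
\]
Hence $\overline{v}(s) \leq \ell+\eta$, and letting $\eta \to 0$ yields $\overline{v}(s) \leq \ell = \liminf_n \overline{v}_n(s)$. Since $s \in (0,|\Omega|)$ was arbitrary, the conclusion holds (in fact for every such $s$, so in particular a.e.).

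The main obstacle is a bookkeeping subtlety rather than a deep one: making sure the strictness of the inequalities in the definition of the distribution function is handled consistently with the left-continuity of $\overline{v}$, so that the equivalence ``$\overline{v}(s) \leq t \iff A(t) \leq s$'' is applied correctly (this is why I introduce the auxiliary $\eta > 0$ and work with the strict level $\ell+\eta$ rather than $\ell$ itself). One must also be slightly careful that passing to a subsequence realizing the $\liminf$ is harmless, since the target inequality only involves that $\liminf$. No compactness or regularity of $\Omega$ is needed beyond $|\Omega| < \infty$.
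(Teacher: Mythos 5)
Your overall strategy (Fatou's lemma applied to the indicator functions of the superlevel sets, after passing to a subsequence realizing the $\liminf$ and perturbing the level by $\eta$) is exactly the standard argument behind the result the paper simply cites from Grafakos, and the first half of your chain is fine: with the paper's definition $\overline{v}(s)=\inf\{t\ge 0: A(t)<s\}$, the implication $\overline{v}_n(s)\le \ell+\eta \Rightarrow |\{|v_n|>\ell+\eta\}|\le s$ does hold (by monotonicity and right-continuity of the distribution function), and Fatou then gives $|\{|v|>\ell+\eta\}|\le s$ for every $\eta>0$.

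The flaw is in the converse direction of your claimed equivalence ``$\overline{g}(s)\le t \iff |\{|g|>t\}|\le s$'', which you justify by left-continuity of $\overline{g}$. With the paper's convention (strict inequality in the infimum, giving the \emph{left}-continuous rearrangement) this equivalence is false: for $\Omega=(0,1)$, $v=2\chi_{(0,1/2)}$, one has $A(0)=1/2\le 1/2$ but $\overline{v}(1/2)=2$. Consequently your final parenthetical ``in fact for every such $s$'' is also false: taking $v_n=2\chi_{(0,1/2-1/n)}$ gives $|v|\le\liminf_n|v_n|$ a.e., yet $\overline{v}(1/2)=2>0=\liminf_n\overline{v}_n(1/2)$. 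The equivalence you want holds for the right-continuous rearrangement $v^*(s)=\inf\{t:A(t)\le s\}$, so what your argument actually proves is $v^*(s)\le \liminf_n\overline{v}_n(s)$ for every $s$. The a.e. statement of the Proposition is then recovered by one extra observation you need to add: $v^*\le\overline{v}$ always, and the set $\{s: v^*(s)<\overline{v}(s)\}$ is at most countable (both are non-increasing generalized inverses of $A$; at any such $s$ one can inject a rational strictly between the two values), hence a Lebesgue-null set. With that remark inserted, and the ``for every $s$'' claim deleted, your proof is correct and coincides in substance with the cited one.
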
 
\begin{proof}
For the proof see \cite{Gra} Proposition 1.4.5.
\end{proof}

Let us state and prove the following Proposition.
\begin{prop}\label{09-10}
For almost every $s\in(0,|\Omega|)$
\begin{equation}\label{06/10}
A'(\overline{v}(s))\le1 \ \ \ \mbox{and} \ \ \mbox{if} \ \ \overline{v}'(s)\neq0 \ \ \ A'(\overline{v}(s))=\frac{1}{\overline{v}'(s)}.
\end{equation}
\end{prop}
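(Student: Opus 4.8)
The plan is to exploit that $A$ and $\overline{v}$ are, up to one-sided continuity conventions, generalized inverses of one another, so that the only obstruction to the relation ``$A'(\overline{v}(s))\,\overline{v}'(s)=1$'' is the set on which $\overline{v}$ is locally constant. First I would record the elementary ingredients. Both $A$ and $\overline{v}$ are non-increasing, hence differentiable a.e.\ with finite, non-positive derivative; in particular $A'(t)\le 0\le 1$ wherever $A'(t)$ exists, which already yields the first inequality in \eqref{06/10}. Moreover, from $\overline{v}(s)=\inf\{t\ge 0:A(t)<s\}$ and the right continuity of $A$ one gets the basic bound $A(\overline{v}(s))\le s$ for every $s\in(0,|\Omega|)$: indeed $A(t)<s$ for all $t>\overline{v}(s)$, and letting $t\downarrow\overline{v}(s)$ and using right continuity gives the claim.

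Next I would isolate the relevant values of $s$. Call $s_{0}$ a \emph{strict point} of $\overline{v}$ if $\overline{v}(s)>\overline{v}(s_{0})$ for every $s<s_{0}$ and $\overline{v}(s)<\overline{v}(s_{0})$ for every $s>s_{0}$. Two observations: (a) at a strict point $s_{0}$, the equimeasurability \eqref{equimis}, read in the form $A(t)=|\{s:\overline{v}(s)>t\}|$ with $t=\overline{v}(s_{0})$, gives $\{s:\overline{v}(s)>\overline{v}(s_{0})\}=(0,s_{0})$, hence $A(\overline{v}(s_{0}))=s_{0}$; (b) if $s$ is not strict then, $\overline{v}$ being non-increasing, $\overline{v}$ is constant on a nondegenerate interval having $s$ as one of its points, so the set of non-strict $s$ is contained in the union of the (pairwise disjoint, hence at most countably many) maximal open intervals on which $\overline{v}$ is constant — where $\overline{v}'=0$ — together with their countably many endpoints. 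Hence almost every $s$ with $\overline{v}'(s)\ne 0$ is a strict point, and for such $s$ one has $A(\overline{v}(s))=s$, while $A(\overline{v}(s))\le s$ in general.

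The core of the proof is then the differentiability of $A$ at $t_{0}:=\overline{v}(s_{0})$ at a strict point $s_{0}$ with $\ell:=\overline{v}'(s_{0})\in(-\infty,0)$ (recall a monotone function is a.e.\ differentiable with finite derivative). By the previous step $A(t_{0})=s_{0}$, and by equimeasurability, for $t<t_{0}$ one has $A(t)-A(t_{0})=|\{s:t<\overline{v}(s)\le t_{0}\}|$, with the symmetric identity for $t>t_{0}$; using the strictness of $s_{0}$ to identify these super-level sets with intervals, both cases reduce to $A(t)-A(t_{0})=s^{*}(t)-s_{0}$, where $s^{*}(t):=\inf\{s:\overline{v}(s)\le t\}$. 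It then remains to show $\dfrac{s^{*}(t)-s_{0}}{t-t_{0}}\to\dfrac1\ell$ as $t\to t_{0}$. Given $\varepsilon\in(0,|\ell|)$, the definition of $\overline{v}'(s_{0})$ provides $\delta>0$ with $|\overline{v}(s_{0}+h)-t_{0}-\ell h|\le\varepsilon|h|$ for $|h|<\delta$; evaluating this at $h=(t-t_{0})/(\ell-\varepsilon)$ and at $h=(t-t_{0})/(\ell+\varepsilon)$ — both small and of the correct sign once $t$ is near $t_{0}$ — and invoking the monotonicity of $\overline{v}$, one sandwiches $\dfrac{|t-t_{0}|}{|\ell-\varepsilon|}\le|s^{*}(t)-s_{0}|\le\dfrac{|t-t_{0}|}{|\ell+\varepsilon|}$ for $t$ close to $t_{0}$. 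Since $s^{*}(t)-s_{0}$ and $t-t_{0}$ have opposite signs, letting $t\to t_{0}$ and then $\varepsilon\to 0$ gives $A'(t_{0})=1/\ell=1/\overline{v}'(s_{0})$, which is the second assertion of \eqref{06/10}.

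I expect the only genuine friction to be in the sandwich above: when $\overline{v}$ happens to be locally flat at precisely the level $t$, the inequality $\overline{v}(s_{0}+h)\ge t$ furnished by the $\varepsilon$-estimate does not by itself place $s_{0}+h$ strictly inside $\{\overline{v}>t\}$, so one has to work with a slightly smaller increment $h'$, use the strict inequality $\overline{v}(s_{0}+h')>t$ that the estimate still gives, and then take the supremum over such $h'$. A secondary, purely bookkeeping, point is observation (b): the maximal open intervals of constancy of $\overline{v}$ are pairwise disjoint, hence at most countably many, $\overline{v}'$ vanishes on each, and every non-strict $s$ lies in the closure of one of them.
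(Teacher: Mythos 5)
Your argument is correct, and at the decisive step it is more careful than the paper's own proof, although the two share the same skeleton: both split $(0,|\Omega|)$ into the at most countably many plateaus of $\overline{v}$ (where $\overline{v}'=0$, so the second assertion is vacuous) and the remaining ``strict'' points, where equimeasurability \eqref{equimis} yields $A(\overline{v}(s))=s$. The paper essentially stops there: it differentiates this identity appealing only to the a.e.\ differentiability of $A$ and $\overline{v}$, together with the claim that $\overline{v}'\neq0$ a.e.\ outside the plateaus — which is not justified (a strictly decreasing rearrangement can have vanishing derivative a.e.) — and the implicit chain rule, which would require $A$ to be differentiable at the particular value $\overline{v}(s)$, something a.e.\ differentiability of $A$ alone does not give. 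Your proposal supplies precisely this missing step: at a strict point $s_0$ with $\ell=\overline{v}'(s_0)\neq0$ you identify $A(t)$ with the generalized inverse $s^{*}(t)=\inf\{s:\overline{v}(s)\le t\}$ near $t_0=\overline{v}(s_0)$, and the $\varepsilon$-sandwich (with the correct repair, via slightly smaller increments, for levels at which $\overline{v}$ is flat at height $t$) proves that $A$ is differentiable at $t_0$ with $A'(t_0)=1/\ell$; moreover you correctly reduce to a.e.\ $s$ with $\overline{v}'(s)\neq0$, which is all that is needed when the proposition is invoked to pass to \eqref{SB}. The one place you treat differently from the paper is the first inequality in \eqref{06/10}: you read it as trivial wherever $A'$ exists (since $A'\le0$), whereas the paper asserts $A'(\overline{v}(s))=0$ a.e.\ on the plateaus; since $A$ in fact jumps at plateau levels, neither reading is fully satisfactory, but this is an imprecision of the statement itself and does not affect the substantive second claim or its subsequent use. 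In short: same decomposition, but you replace the paper's chain-rule shortcut by a direct first-principles computation of the difference quotient of $A$, gaining rigor at the cost of a longer argument.
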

\begin{proof}
Let us consider all the values $s_i$ with $i\in\mathbb{N}$ such that the set
\[
B_i=\{t\in(0,|\Omega|) \ : \ |\overline{v}(t)|=\overline{v}(s_i)\}
\]
has a strictly positive measure. By constriction  every $B_i$ is an half-open proper interval on which $v(s)$ is constant and, since $\overline{v}(s)$ is not increasing, $\overline{B}_i\cap\overline{B}_j=\emptyset$ for $i\neq j$ (this assures us that the $B_i$ are indeed countable). Moreover $\cup_{i\in\mathbb{N}}\overline{B}_i$ is closed and
\[
A'(\overline{v}(s))=0 \ \ \ \forall \ a.e.  \ \ s\in \cup_{i\in\mathbb{N}}\overline{B}_i.
\]
On the other hand setting $K=(0,|\Omega|)\setminus\cup_{i\in\mathbb{N}}\overline{B}_i$ we have that
\[
\forall \ s \in K, \ \ \ |\{|\overline{v}(t)|=\overline{v}(s)\}|=0 \ \ \ \mbox{hence} \ \ \ A(\overline{v}(s))=s.
\]
Since both $\overline{v}(s)$ and $A(s)$ are almost $a.e$ differentiable in $(0,|\Omega|)$ and, since for $a.e. \ s\in K$ it holds true that $\overline{v}'(s)\neq 0$, we have finished.
\end{proof}

Let us state and prove the following useful Lemma (see Lemma 9 of \cite{Okl}).
\begin{lemma}\label{16-8}
For every measurable function $v:\Omega\to\mathbb{R}$, there exists a set valued map $s\to\Omega(s)\subset\Omega$ such that
\begin{equation}\label{night1}
\begin{cases}
|\Omega(s)|=s \ \ \ \mbox{for any} \ s\in[0,|\Omega|],\\
\Omega(s_1)\subset \Omega(s_2) \ \ \ \mbox{whenever} \ \ \ s_1<s_2,\\
\Omega(s)=\{|v|>\overline{v}(s)\} \ \ \ \mbox{if} \  \ \ |\{|v|=\overline{v}(s)\}|=0.
\end{cases}
\end{equation}

\end{lemma}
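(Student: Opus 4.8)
The goal is to construct, for an arbitrary measurable $v:\Omega\to\mathbb{R}$, a monotone family of subsets $\Omega(s)$ whose measures are exactly $s$ and which coincide with the super-level sets $\{|v|>\overline{v}(s)\}$ at those levels $\overline{v}(s)$ that are attained on a null set. I would first handle the ``easy'' part of the range: let $K\subset(0,|\Omega|)$ denote (as in the proof of Proposition~\ref{09-10}) the set of parameters $s$ for which $|\{|v|=\overline{v}(s)\}|=0$. For $s\in K$ one has $A(\overline{v}(s))=s$, so simply setting $\Omega(s):=\{|v|>\overline{v}(s)\}$ already forces $|\Omega(s)|=s$ and, by monotonicity of $\overline v$ together with right-continuity of $A$, gives the inclusion $\Omega(s_1)\subset\Omega(s_2)$ for $s_1<s_2$ both in $K$. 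The third bullet of \eqref{night1} is then automatic on $K$.

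The remaining work is to fill in the gaps, i.e.\ the countably many half-open intervals $B_i=\{t:\overline v(t)=\overline v(s_i)\}$ on which $\overline v$ is constant at a value that $|v|$ attains on a set of positive measure. On the closure of such an interval, say $B_i=[a_i,b_i)$ with common value $c_i=\overline v(s_i)$, the set $\{|v|\ge c_i\}$ has measure $b_i$ while $\{|v|>c_i\}$ has measure $a_i$, so the ``extra'' set $N_i:=\{|v|=c_i\}$ has measure $b_i-a_i=|B_i|$. The plan is to parametrize $N_i$ by a measure-exhausting increasing family: using the standard fact (a one-dimensional Lyapunov/continuity argument, or an application of the layer-cake description via an auxiliary injection into $[0,|N_i|]$) there is an increasing family of measurable subsets $N_i(\tau)\subset N_i$ with $|N_i(\tau)|=\tau$ for $\tau\in[0,|N_i|]$. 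Then for $s=a_i+\tau\in B_i$ I would define
\[
\Omega(s):=\{|v|>c_i\}\cup N_i(\tau).
\]
This has measure $a_i+\tau=s$, is increasing in $s$ within $B_i$, matches $\{|v|>c_i\}$ at $s=a_i$ and matches $\{|v|\ge c_i\}=\{|v|>\overline v(s)\text{ for } s \text{ slightly beyond } b_i\}$ as $s\uparrow b_i$, so it glues continuously (in the sense of monotone inclusion) with the pieces already defined on $K$.

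Finally I would check global monotonicity: if $s_1<s_2$ lie in different intervals $B_i$, $B_j$ (or one in $K$), then $\overline v(s_1)\ge\overline v(s_2)$, hence $\{|v|>\overline v(s_1)\}\subset\{|v|\ge\overline v(s_2)\}$, and the constructed $\Omega(s_1),\Omega(s_2)$ sit between these two super-/sub-level sets in the right order; a short case analysis (strict inequality $\overline v(s_1)>\overline v(s_2)$ versus equality, the latter forcing $s_1,s_2$ in the same $B_i$, already handled) closes this. Conditions \eqref{night1} then hold by construction, noting the boundary values $\Omega(0)=\emptyset$, $\Omega(|\Omega|)=\Omega$ up to null sets, which can be arranged by the conventions above. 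The main obstacle, and the only place requiring genuine care rather than bookkeeping, is producing the increasing measure-exhausting subfamilies $N_i(\tau)$ of each level set $N_i$: this is where one invokes non-atomicity of Lebesgue measure on $\mathbb{R}^N$ (every measurable set of positive measure contains subsets of every intermediate measure, and these can be chosen nested and measurably parametrized), which is exactly the content of the cited Lemma~9 of \cite{Okl}; I would either quote that or reproduce the short argument building $N_i(\tau)$ as $N_i\cap\{x: g(x)<\tau\}$ for a suitable measure-preserving $g:N_i\to[0,|N_i|]$.
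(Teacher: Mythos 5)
Your proof is correct, and at its core it rests on the same two ingredients as the paper's: the equimeasurability sandwich $|\{|v|>\overline v(s)\}|\le s\le|\{|v|\ge\overline v(s)\}|$ and the non-atomicity of Lebesgue measure, used to produce a set of measure exactly $s$ squeezed between the strict and the closed super-level sets. The difference is one of completeness rather than of method: the paper's proof only asserts, for each fixed $s$ separately, the existence of some $\Omega(s)$ with $\{|v|>\overline v(s)\}\subset\Omega(s)\subset\{|v|\ge\overline v(s)\}$ and $|\Omega(s)|=s$, and never explains why these choices can be coordinated so as to be nested in $s$ (the second line of \eqref{night1}); that point is implicitly delegated to the cited Lemma 9 of \cite{Okl}. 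You make it explicit: where $|\{|v|=\overline v(s)\}|=0$ the choice is forced, on each plateau of $\overline v$ at level $c_i$ you exhaust the level set $N_i=\{|v|=c_i\}$ by an increasing family $N_i(\tau)$ with $|N_i(\tau)|=\tau$ (for instance $N_i\cap B_{r(\tau)}(0)$, with $r(\tau)$ chosen by continuity of $r\mapsto|N_i\cap B_r(0)|$), and global monotonicity for $s_1<s_2$ in different pieces reduces to $\{|v|\ge\overline v(s_1)\}\subset\{|v|>\overline v(s_2)\}$ when $\overline v(s_1)>\overline v(s_2)$, equality forcing both parameters into the same plateau. Two cosmetic remarks: since $\overline v$ is left continuous the plateaus have the form $(a_i,b_i]$ rather than $[a_i,b_i)$, which changes nothing in your measure computations; and the matching "as $s\uparrow b_i$" is only needed in the weaker form $\Omega(s_1)\subset\{|v|\ge c_i\}\subset\{|v|>\overline v(s_2)\}\subset\Omega(s_2)$ for $s_1\le b_i<s_2$, which your construction indeed provides. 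In short, your route buys a self-contained verification of the nestedness condition, at the cost of the plateau bookkeeping that the paper avoids by quoting \cite{Okl}.
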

\begin{rem}\label{okkio}
When we use Lemma \ref{16-8} with $v\equiv\un$ or $\wn$ (see \eqref{apprp} and \eqref{!!appr1} below for the definition of $\un$ and $\wn$) the associated set functions are denoted with $\Omega_{n}(s)$. When we use Lemma \ref{16-8} with $v\equiv|\nabla\un|$ or $|\nabla\wn|$ the associated set function is denoted with $\widetilde{\Omega}_{n}(s)$.
\end{rem}
\begin{proof}
By construction $v(x)$ and $\overline{v}(s)$ are equimeasurable thus
\[
|\{|v(x)|>\overline{v}(s)\}|=|\{|\overline{v}(\tau)|>\overline{v}(s)\}|\le s \le|\{|\overline{v}(\tau)|\ge\overline{v}(s)\}|=|\{|v(x)|\ge\overline{v}(s)\}|.
\]
Since the Lebesgue measure is not atomic there exists $\Omega(s)$ such that
\begin{equation}\label{25-5}
\{|v(x)|>\overline{v}(s)\}\subset \Omega(s)\subset\{|v(x)|\ge\overline{v}(s)\} \ \ \ \mbox{and} \ \ \ |\Omega(s)|=s.
\end{equation}
Of course if $|\{|v|=\overline{v}(s)\}|=0$, then $ \Omega(s)=\{|v(x)|>\overline{v}(s)\}$.\\
\end{proof}
In the next Lemma we define the pseudo rearrangement of a function $g\in\elle 1$ with respect to a measurable function $v(x)$ (see \cite{AT} and \cite{FV}).
\begin{lemma} \label{dife}
Let $v:\Omega\to\mathbb{R}$ a measurable function, $0\le g(x)\in\elle 1$ and $\Omega (s)$ the set valued function associated to $v(x)$ defined in \eqref{night1}. Then 
\begin{equation}\label{night2}
D (s):=\frac{d}{ds}\int_{\Omega (s)}g(x)dx, \  \ \ s\in(0,|\Omega|)
\end{equation}
is well defined and moreover
\begin{equation}\label{24-5bis}
i) \ \ \ \int_0^tD (s)ds=\int_{\Omega (t)}g(x)dx\le \int_0^t\overline{g}(s)ds, \ \ \ t\in(0,|\Omega|)
\end{equation}
\begin{equation}\label{24-5}
ii) \ \ \ D (A (k))(-A' (k))=-\frac{d}{dk}\int_{\{|\un|>k\}}g(x)dx, \ \ \ k>0.
\end{equation}
\end{lemma}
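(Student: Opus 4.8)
The plan is to build the set-valued map $s\mapsto\Omega(s)$ from Lemma \ref{16-8} and then study the monotone function $s\mapsto\int_{\Omega(s)}g$, which I will call $\Phi(s)$. Since $g\ge 0$ and $\Omega(s_1)\subset\Omega(s_2)$ whenever $s_1<s_2$ (second line of \eqref{night1}), the function $\Phi$ is nondecreasing on $(0,|\Omega|)$, hence differentiable almost everywhere; this already gives that $D(s)=\Phi'(s)$ is well defined a.e. For the inequality in \eqref{24-5bis}, I first note that, because $\Phi$ is monotone, $\int_0^t\Phi'(s)\,ds\le\Phi(t)-\Phi(0^+)=\int_{\Omega(t)}g$, with equality if $\Phi$ is in addition absolutely continuous; in fact one checks AC directly since $\Phi(t)-\Phi(\tau)=\int_{\Omega(t)\setminus\Omega(\tau)}g$ and $|\Omega(t)\setminus\Omega(\tau)|=t-\tau\to 0$, so absolute continuity of the Lebesgue integral of $g\in\elle1$ forces $\Phi$ to be absolutely continuous and $\int_0^t D(s)\,ds=\int_{\Omega(t)}g$ exactly. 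The remaining bound $\int_{\Omega(t)}g\le\int_0^t\overline g(s)\,ds$ is the classical Hardy--Littlewood inequality: among all measurable sets $\omega$ with $|\omega|=t$, the integral $\int_\omega g$ is maximized by pushing the mass of $g$ to where it is largest, i.e. $\int_\omega g\le\int_0^t\overline g(s)\,ds$; applied with $\omega=\Omega(t)$ this is precisely \eqref{24-5bis}.

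For part \eqref{24-5}, the idea is a change of variables $s=A(k)$ in the identity of part $i)$, exploiting the third line of \eqref{night1}. Fix $k>0$ outside the (at most countable) set of values where $\un$ has a level set of positive measure; then $|\{|\un|=k\}|=0$, so by \eqref{night1} we have $\Omega(A(k))=\{|\un|>\overline{\un}(A(k))\}=\{|\un|>k\}$, the last equality because $A(k)$ is precisely the measure of $\{|\un|>k\}$ and $\overline{\un}(A(k))=k$ for such $k$ (this is where I would invoke the relation between $A$ and $\overline{\un}$ recorded around \eqref{06/10}, together with left-continuity of $\overline{\un}$). Hence $\Phi(A(k))=\int_{\{|\un|>k\}}g$. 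Since $A(k)$ is nonincreasing and (being a distribution function) of bounded variation, and $\Phi$ is absolutely continuous and monotone, the composition $k\mapsto\Phi(A(k))$ is differentiable a.e. with $\dfrac{d}{dk}\Phi(A(k))=\Phi'(A(k))\,A'(k)=D(A(k))A'(k)$; rearranging gives
\[
D(A(k))\,(-A'(k))=-\frac{d}{dk}\int_{\{|\un|>k\}}g(x)\,dx,
\]
which is \eqref{24-5}.

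The main obstacle I anticipate is the chain rule step for the composition $\Phi\circ A$: one must be careful that $\Phi$ absolutely continuous and $A$ merely monotone (not AC, e.g. it may have jumps) is \emph{not} enough for the naive chain rule, but here it works because $\Phi$ is also Lipschitz on compact subintervals away from $0$ — indeed $|\Phi(t)-\Phi(\tau)|=\int_{\Omega(t)\setminus\Omega(\tau)}g\le \|g\|_{\elle1}$ is not quite Lipschitz, so more honestly one should argue via the layer-cake / coarea-type identity $\int_{\{|\un|>k\}}g=\int_k^\infty D(A(\sigma))(-A'(\sigma))\,d\sigma$ obtained by substituting $s=A(\sigma)$ in $\int_0^{A(k)}D(s)\,ds$ (valid since $A$ is monotone and $D\in L^1$), and then differentiating this in $k$. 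That substitution, together with the a.e. validity of $A(\overline{\un}(s))=s$ on the complement of the flat parts (Proposition \ref{09-10}) and the handling of the countably many level sets of positive measure as in the proof of Lemma \ref{16-8}, is the delicate bookkeeping that the proof really hinges on; everything else is Hardy--Littlewood and absolute continuity of the integral.
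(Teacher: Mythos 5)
Your argument is correct and follows essentially the same route as the paper: absolute continuity of $s\mapsto\int_{\Omega(s)}g$ (via $|\Omega(t)\setminus\Omega(\tau)|=t-\tau$ and absolute continuity of the integral of $g\in\elle1$), the Hardy--Littlewood inequality for \eqref{24-5bis}, and evaluation of part $i)$ at $s=A(k)$ using $\Omega(A(k))=\{|v|>k\}$ followed by differentiation in $k$ to get \eqref{24-5}. Your additional caution about the chain rule goes beyond what the paper does (it simply differentiates the identity); since \eqref{24-5} is an a.e.-in-$k$ statement, this extra bookkeeping, as well as your exclusion of the countably many levels of positive measure, does not change the substance of the proof.
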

\begin{proof}
Note now that the function defined for $s\in(0,|\Omega|)$ as
\[
s\to\int_{\Omega (s)}g(x)dx
\]
is absolutely continuous in $(0,|\Omega|)$. Thus it is almost everywhere differentiable and, denoting by $D (s)$ its derivative, \eqref{24-5bis} holds true. Reading equation \eqref{24-5bis} for every $s$ such that $s=A (k)$ it follows 
\[
\int_0^{A (k)}D (s)ds=\int_{\Omega (A (k))}g(x)dx=\int_{\{|v|>k\}}g(x)dx,
\]
where we have used that $\Omega (A (k))=\{|v|>\overline{v} (A (k))\}=\{|v|>k\}$. Differentiating with respect to $k$ the previous identity we get \eqref{24-5}.
\end{proof}
The following Lemma assures that the pseudo rearrangement of $g$ has the same summability of $g$. 
\begin{lemma}\label{inl}
Assume that $g\in\elle r$ with $1\le r\le\infty$. Then the function $D (s)$ defined in \eqref{night2} belongs to $L^r((0,|\Omega|)$ and $\|D \|_{L^r(0,|\Omega|)}\le\|g\|_{\elle r}$.\\ Moreover if we assume that $g\in\emme s$ with $1<s<\infty$, then $D $ belongs to $M^s(0,|\Omega|)$.
\end{lemma}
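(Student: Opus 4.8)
The plan is to establish the two assertions separately, working directly from the defining relation \eqref{24-5bis}. First I would treat the case $1\le r<\infty$. The starting point is the inequality
\[
\int_0^t D(s)\,ds=\int_{\Omega(s)}g(x)\,dx\le\int_0^t\overline{g}(s)\,ds,\qquad t\in(0,|\Omega|),
\]
which says that the maximal function (running average) of $D$ is pointwise dominated by that of $\overline{g}$. Since both $D$ and $\overline{g}$ are nonnegative, a Hardy–Littlewood type comparison — equivalently, the fact that if $\int_0^t\phi\le\int_0^t\psi$ for all $t$ with $\psi$ nonincreasing then $\int_0^{|\Omega|}\Phi(\phi)\le\int_0^{|\Omega|}\Phi(\psi)$ for convex nondecreasing $\Phi$ with $\Phi(0)=0$ — applied with $\Phi(\tau)=\tau^r$ gives
\[
\int_0^{|\Omega|} D(s)^r\,ds\le\int_0^{|\Omega|}\overline{g}(s)^r\,ds=\|g\|_{\elle r}^r,
\]
using equimeasurability of $g$ and $\overline{g}$ in the last step. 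This is exactly $\|D\|_{L^r(0,|\Omega|)}\le\|g\|_{\elle r}$. Here I should be a little careful: rather than invoke the abstract convexity statement, I can note that $D$ need not be the decreasing rearrangement of anything, so the cleanest route is to bound $D$ by the maximal function $\widetilde{g}(s)=\frac1s\int_0^s\overline g$ pointwise only in the averaged sense above and then use that $\overline{g}$ is nonincreasing; alternatively one observes $D(s)\le \widetilde g(s)$ fails in general but the integral inequality suffices once combined with the Hardy inequality $\| \widetilde g\|_{L^r}\le r'\|g\|_{\elle r}$ — but the sharp constant $1$ really does come from the convexity argument, so I would present that one.

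For $r=\infty$ the argument is simpler and more robust: from \eqref{night2}, for a.e.\ $s$,
\[
D(s)=\lim_{h\to0^+}\frac1h\int_{\Omega(s+h)\setminus\Omega(s)}g(x)\,dx\le \|g\|_{\lio}\,\lim_{h\to0^+}\frac{|\Omega(s+h)\setminus\Omega(s)|}{h}=\|g\|_{\lio},
\]
since $|\Omega(s)|=s$ by \eqref{night1}, so $|\Omega(s+h)\setminus\Omega(s)|=h$.

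Finally, for the Marcinkiewicz statement, assume $g\in\emme s$ with $1<s<\infty$, i.e.\ $\sup_{t}t^{1/s}\overline{g}(t)=\|g\|_{\emme s}<\infty$, equivalently $\overline g(t)\le \|g\|_{\emme s}\,t^{-1/s}$. I want $\sup_{t} t^{1/s}\,\overline{D}(t)<\infty$. Using \eqref{24-5bis} together with the elementary fact that for a nonnegative function $D$ one has, for every $t$,
\[
t\,\overline{D}(t)\le\int_0^t \overline{D}(s)\,ds\le \sup_{0<\tau\le |\Omega|}\Big(\text{sum of } \le \tfrac{t}{\tau}\ \text{maximal averages}\Big),
\]
the honest route is: the decreasing rearrangement of $D$ dominates the integral means, so $\overline D(t)\le \widetilde D(t)=\frac1t\int_0^t\overline D$, and by \eqref{24-5bis}, $\frac1t\int_0^t\overline D \le \frac1t\int_0^t\overline g = \widetilde g(t)\le \frac{s}{s-1}\,\|g\|_{\emme s}\,t^{-1/s}$, where the last inequality is the standard computation $\frac1t\int_0^t \tau^{-1/s}\,d\tau=\frac{s}{s-1}t^{-1/s}$. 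Hence $t^{1/s}\overline D(t)\le s'\|g\|_{\emme s}$, i.e.\ $D\in M^s(0,|\Omega|)$ with $\|D\|_{M^s}\le s'\|g\|_{\emme s}$.

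The only genuinely delicate point is obtaining the \emph{sharp} constant $1$ in the $L^r$ bound for $1<r<\infty$: the easy Hardy-inequality argument only gives $r'$, and one must instead use the Hardy–Littlewood rearrangement inequality (the majorization $\int_0^t D\le\int_0^t\overline g$ with $\overline g$ nonincreasing implies $\int_0^{|\Omega|}\Phi(D)\le\int_0^{|\Omega|}\Phi(\overline g)$ for convex nondecreasing $\Phi$) — this is where I would concentrate the details, citing e.g.\ \cite{Gra} or \cite{Okl}. The Marcinkiewicz part and the $r=1,\infty$ cases are then routine consequences of \eqref{24-5bis} and \eqref{night1}.
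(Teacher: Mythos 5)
Your argument for $r=1$ and $r=\infty$ is fine, but the central step you lean on for $1<r<\infty$ — ``if $\int_0^t\phi\le\int_0^t\psi$ for all $t$ with $\psi$ nonincreasing, then $\int\Phi(\phi)\le\int\Phi(\psi)$ for convex nondecreasing $\Phi$'' — is false as stated when $\phi$ itself is not nonincreasing. Take $\psi\equiv1$ on $(0,1)$, $\phi=2\chi_{(1/2,1)}$ and $\Phi(\tau)=\tau^2$: the cumulative inequality holds for every $t\in(0,1)$, yet $\int\Phi(\phi)=2>1=\int\Phi(\psi)$. The correct Hardy--Littlewood--P\'olya principle needs the majorization for the \emph{rearrangement}, i.e.\ $\int_0^t\overline{D}\le\int_0^t\overline{g}$, and \eqref{24-5bis} only gives $\int_0^tD\le\int_0^t\overline{g}$, which is weaker since $\int_0^tD\le\int_0^t\overline D$ may be strict. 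The same conflation reappears in your Marcinkiewicz argument, where you write ``$\frac1t\int_0^t\overline D\le\frac1t\int_0^t\overline g$ by \eqref{24-5bis}'': that inequality is exactly what has to be proved, not what \eqref{24-5bis} says. So as written both the $L^r$ bound ($1<r<\infty$) and the $M^s$ bound have a genuine gap, and you have not used the one structural feature of $D$ that makes the lemma true, namely the nestedness $\Omega(s_1)\subset\Omega(s_2)$ from \eqref{night1}.

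The gap is repairable along your lines, but it needs an extra idea. From nestedness, for any finite union of intervals $A=\cup_j(a_j,b_j)$ with disjoint closures one has $\int_AD=\int_{\cup_j(\Omega(b_j)\setminus\Omega(a_j))}g\le\int_0^{|A|}\overline{g}$, because the sets $\Omega(b_j)\setminus\Omega(a_j)$ are pairwise disjoint of total measure $|A|$ and the Hardy--Littlewood inequality bounds the integral of $g$ over any set of measure $m$ by $\int_0^m\overline g$; since $D\ge0$, an outer approximation by open sets extends this to all measurable $A$, and taking the supremum over $|A|=t$ yields precisely $\int_0^t\overline D\le\int_0^t\overline g$. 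With that in hand, your convexity argument and your $M^s$ computation both go through with the stated constants. Note this is a genuinely different route from the paper, which instead discretizes $(0,|\Omega|)$, builds functions $D_i$ equimeasurable with $g$ (à la Alvino--Trombetti), shows $D_i\rightharpoonup D$, and gets the $L^r$ bound from weak lower semicontinuity and the $M^s$ bound by testing the weak limit against characteristic functions; either way, the inequality $\int_0^t\overline D\le\int_0^t\overline g$ (or its weak-limit surrogate) is the real content, and your proposal currently assumes it rather than proves it.
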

\begin{proof} 
\textbf{Case $g\in\elle r$.} This part of the Lemma has already been proved in \cite{AT} (Lemma 2.2). For the convenience of the reader we provide here the proof. Let us divide the interval $(0,|\Omega|)$ into $i\in\mathbb{N}$ disjoint intervals of the type $(s_{j-1},s_{j})$, for $j=1,\cdots, i$ ,of equal measure $|\Omega|/i$. Let us consider the restriction of $g(x)$ on the set $\Omega (s_{j})\setminus\Omega (s_{j-1})$ and take its decreasing rearrangement in the interval $(s_{j-1},s_{j})$. Repeating this for any $j=1,\cdots, i$ we define a function (up to a zero measure set) on $(0,|\Omega|)$. Clearly this function depends on $i$ and so we call it $ D_{i}(s)$. We stress that by construction the decreasing rearrangement of $ D_{i}(s)$ coincides with the decreasing rearrangement of $g(x)$, thus for any measurable $\omega\subset (0,|\Omega|)$
\begin{equation}\label{tacca}
\int_{\omega}  D_{i}^r(s) ds\le \int_0^{|\omega|}\overline{g}^r(s)ds.
\end{equation}
Hence the sequence $\{ D_{i}^r(s)\}$ is equi-integrable and there exists a function $X\in L^r(0,|\Omega|)$ such that
\[
 D_{i}\rightharpoonup X  \ \ \ \mbox{in} \ L^r(0,|\Omega|) \ \ \ \mbox{as} \ \ \ i\to\infty.
\]
The proof is concluded if we show that $X \equiv D $. Let us define the function
\[
\Phi_{i}(s):=\int_0^s\big( D_{i}(t)-D (t)\big)dt
\] and notice that $\Phi_{i}(0)=\Phi_{i}(|\Omega|)=0$. Thus for any $\varphi(s)\in C^1(0,|\Omega|)$ it results
\begin{multline}	\label{bv}
\int_0^{|\Omega|}\big( D_{i}(s)-D (s)\big) \varphi(s)ds\\=-\int_0^{|\Omega|}\left[\int_0^s\big( D_{i}(t)-D (t)\big)dt\right]d \varphi(s)\le \|\Phi_{i}\|_{L^{\infty}(0,|\Omega|)}\|\varphi'\|_{L^{\infty}(0,|\Omega|)}|\Omega|.
\end{multline}
By construction $\Phi_{i}(s_j)=0$ for any $j=1\cdots i$, since
\[
\int_0^{s_j} D_{i}(t)dt=\sum_{l=1}^{j}\int_{s_{l-1}}^{s_l} D_{i}(t)dt=\sum_{l=1}^{j}\int_{\Omega (s_l)/\Omega (s_{l-1})}g(x)dx
\]
\[
=\int_{\Omega (s_j)}g(x)dx=\int_0^{s_j}D (t)dt.
\]
Hence if $s_{j-1}\le s\le s_j$ we have that
\[
\Phi_{i}(s)=\int_{s_{j-1}}^{s}\big( D_{i}(t)-D(t)\big)dt.
\]
Recalling \eqref{24-5bis} we deduce
\[
-\int_{0}^{|\Omega|/i}\overline{\varphi}(t)dt\le -\int_{s_{j-1}}^{s}D (t)dt\le\int_{s_{j-1}}^{s}\big( D_{i}(t)-D(t)\big)dt\le \int_{s_{j-1}}^{s} D_{i}(t)dt\le \int_{0}^{|\Omega|/i}\overline{\varphi}(t)dt,
\]
that implies the following estimate
\[
|\Phi_{i}(s)|\le \int_{0}^{|\Omega|/i}\overline{\varphi}(t)dt.
\]
Hence the right hand side of \eqref{bv} goes to $0$ as $i$ diverges and
\[
\lim_{i\to\infty}\int_0^{|\Omega|}\big( D_{i}(s)-D (s)\big) \varphi(s)ds= 0,  \ \ \ \forall \ \varphi\in C^1(0,|\Omega|).
\]
Since we already know that $ D_{i}(s)$ admits $X(s)$ as weak limit in $L^r(0,|\Omega|)$, it follows that $X(s)\equiv D (s)$ and we conclude the proof.\\

\textbf{Case $g\in\emme s$.} As in the previous step we can construct a sequence $\{ D_{i}\}$ such that $\overline{D}_{i}(s)=\overline{g}(s)$ for $s\in(0,|\Omega|)$ and
\[
\lim_{i\to\infty}\int_0^{|\Omega|} D_{i}\phi= \int_0^{|\Omega|} D  \phi\ \ \ \forall \ \phi\in L^{\infty}(\Omega).
\]
Take $\phi_A=\chi_A$ with $A\subset (0,|\Omega|)$ and $|A|=s$. We deduce that
\[
\int_0^{|\Omega|} D  \phi_A\le \int_0^s\overline{g} \ \ \mbox{and taking the sup with respect to $A$} \ \ \int_0^{s} \overline{D} \le \int_0^s\overline{g}.
\]
Thus
\[
\overline{D} (s)\le \frac{1}{s}\int_0^{s} \overline{D} \le \frac{1}{s}\int_0^s\overline{g}\le\|g\|_{\emme s}\frac{r}{r-1}s^{-\frac 1r}.
\]

\end{proof}

A key tool in the symmetrization process introduced in \cite{T} is given by the following Proposition.
\begin{prop}
For any $v\in W^{1,p}_0(\Omega)$ and for any $s\in\mathbb{R}$
\begin{equation}\label{talenti}
 \sigma_N\le  A(s)^{\frac{1}{N}-1}\big(-A'(s)\big)^{\frac{1}{p'}}\left(-\frac{d}{ds}\int_{A(s)}|\nabla v|^p\right)^{\frac{1}{p}},
\end{equation}
where $\sigma_N=N\omega_N^{\frac{1}{N}}$ and $\omega_N$ is the volume of the unitary ball in dimension $N$.
\end{prop}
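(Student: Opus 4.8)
The statement to prove is the classical Talenti inequality \eqref{talenti}, which is the cornerstone of the symmetrization machinery. My plan is to reproduce Talenti's original argument combined with the isoperimetric inequality.

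\medskip

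The plan is to work on the super-level sets of $|v|$. Fix $t>0$ and let $\mu(t)=|\{|v|>t\}|$ (in the paper's notation $A(s)$ is the distribution function, so here the roles of the letters are swapped; I will write $\mu$ for the distribution function to avoid collision with the notation $A(s)$ used for the argument). First I would invoke the coarea formula: for a.e.\ $t$,
\[
\frac{d}{dt}\int_{\{|v|>t\}}|\nabla v|^{p}\,dx=-\int_{\{|v|=t\}}|\nabla v|^{p-1}\,d\mathcal{H}^{N-1},
\]
and similarly $-\mu'(t)=\int_{\{|v|=t\}}|\nabla v|^{-1}\,d\mathcal{H}^{N-1}$, both valid for a.e.\ $t$ by Sard-type arguments for Sobolev functions. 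Next I would estimate the perimeter $P(\{|v|>t\})=\mathcal{H}^{N-1}(\{|v|=t\})$ from below by the isoperimetric inequality,
\[
N\omega_N^{1/N}\,\mu(t)^{1-\frac1N}=\sigma_N\,\mu(t)^{1-\frac1N}\le \mathcal{H}^{N-1}(\{|v|=t\}).
\]

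\medskip

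The middle step is a Hölder interpolation on the level surface. Writing
\[
\mathcal{H}^{N-1}(\{|v|=t\})=\int_{\{|v|=t\}}1\,d\mathcal{H}^{N-1}
=\int_{\{|v|=t\}}|\nabla v|^{\frac1{p'}}\,|\nabla v|^{-\frac1{p'}}\,d\mathcal{H}^{N-1},
\]
I would apply Hölder with exponents $p$ and $p'$ to get
\[
\mathcal{H}^{N-1}(\{|v|=t\})\le\left(\int_{\{|v|=t\}}|\nabla v|^{p-1}\,d\mathcal{H}^{N-1}\right)^{\frac1{p}}\left(\int_{\{|v|=t\}}|\nabla v|^{-1}\,d\mathcal{H}^{N-1}\right)^{\frac1{p'}}.
\]
Combining this with the two coarea identities and the isoperimetric bound yields, for a.e.\ $t$,
\[
\sigma_N\,\mu(t)^{1-\frac1N}\le\left(-\frac{d}{dt}\int_{\{|v|>t\}}|\nabla v|^{p}\right)^{\frac1p}\big(-\mu'(t)\big)^{\frac1{p'}},
\]
which is exactly \eqref{talenti} after the change of variable $t=\overline v(s)$, i.e.\ reading the inequality at $s$ with $\mu(\overline v(s))=A(s)$ in the paper's notation and using the chain rule relations of Proposition \ref{09-10} to pass the derivative $\frac{d}{dt}$ to $\frac{d}{ds}$. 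Since the displayed inequality is pointwise in $t$ on a full-measure set and both sides transform consistently under the monotone change of variable, one gets the claimed inequality for a.e.\ $s\in(0,|\Omega|)$, and then for all $s$ by the left-continuity/monotonicity built into the rearrangement.

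\medskip

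The main technical obstacle is the rigorous justification of the coarea identities and the differentiability of $t\mapsto\int_{\{|v|>t\}}|\nabla v|^p$ for a merely $W^{1,p}_0$ function: one must control the critical set $\{|\nabla v|=0\}\cap\{|v|=t\}$ and argue that it does not contribute, which is standard but delicate (it is handled by truncation and by the fact that $\nabla v=0$ a.e.\ on any level set of positive measure, combined with the absolute continuity of $t\mapsto\mu(t)$ on the set where the level sets are null). A second, minor point is the passage from the variable $t$ to $s$: one has to be careful on the (at most countable) set of plateaus of $\overline v$, but there both sides of \eqref{talenti} vanish in the relevant sense — on plateaus $-A'(\overline v(s))=0$ while the left side is finite — so the inequality holds trivially there. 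Everything else is the routine chain of Hölder and the isoperimetric inequality; I would cite \cite{T} for the full details rather than reproduce them.
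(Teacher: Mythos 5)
Your argument is correct in substance and is precisely the classical chain (isoperimetric inequality plus coarea/difference quotients plus H\"older) of Talenti; the paper itself offers no proof here, it simply refers to pages 711--712 of \cite{T}, which is exactly the argument you reconstruct, so the two routes coincide. Two harmless imprecisions worth fixing: the coarea relation for a Sobolev function gives $-A'(s)\ge\int_{\{|v|=s\}}|\nabla v|^{-1}\,d\mathcal{H}^{N-1}$ rather than equality (the inequality is the direction you actually use), and no change of variable $t=\overline v(s)$ is needed since the $s$ in \eqref{talenti} is already the level variable and the estimate is meant, and used in the paper, for a.e.\ level --- accordingly your closing remark about plateaus (where a vanishing $-A'$ would make the right-hand side zero, not the inequality ``trivially true'') should simply be dropped, as such levels form a null set or lie outside the essential range.
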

\begin{proof}
See pages 711 and 712 of \cite{T}.
\end{proof}

 The next Lemma is used to establish the membership to Lorentz spaces of some integral quantities. 
\begin{lemma}\label{trickint}
Let $r:(0,+\infi)\to(0,+\infi)$ be a decreasing function and let us define for $\beta\ge0$ and $\delta\neq1$
\begin{equation}
R_{\delta}(t):=\begin{cases}
\int_0^t s^{\beta}r(s) ds \ \ \ \mbox{if} \ \ \ \delta<1\\
\int_t^{+\infi} s^{\beta}r(s) ds \ \ \ \mbox{if} \ \ \ \delta>1.
\end{cases}
\end{equation}
Then for every $\lambda>0$ there exists $C=C(\beta,\delta,\lambda)$ such that
\[
\int_0^{\infi}\left(\frac{R_{\delta}(t)}{t}\right)^{\lambda}t^{\delta\lambda}\frac{dt}{t}\le C\int_0^{\infi} r(t)^{\lambda}t^{\lambda(\beta+\delta)}\frac{dt}{t}.
\]
\end{lemma}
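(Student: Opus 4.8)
The plan is to prove this by a direct change-of-variables and an application of a weighted Hardy-type inequality. Since $r$ is decreasing and positive on $(0,\infty)$, it is measurable and locally integrable, so $R_\delta(t)$ is well-defined (assuming, in the case $\delta>1$, that the integral $\int^{+\infty} s^\beta r(s)\,ds$ converges, which is exactly the situation in which the right-hand side is finite; if it is infinite there is nothing to prove). First I would rewrite the claimed inequality by substituting $R_\delta(t)/t$ and collecting powers of $t$: the left-hand integrand is $R_\delta(t)^\lambda\, t^{(\delta-1)\lambda}\,\frac{dt}{t}$, so the statement is
\[
\int_0^\infty R_\delta(t)^\lambda\, t^{(\delta-1)\lambda}\,\frac{dt}{t}\le C\int_0^\infty r(t)^\lambda\, t^{(\beta+\delta)\lambda}\,\frac{dt}{t}.
\]
This is precisely a Hardy inequality for the averaging operator $r\mapsto \int_0^t s^\beta r(s)\,ds$ (or its dual $\int_t^\infty$), with power weights. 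The key point is the sign of the exponent $(\delta-1)\lambda$: when $\delta<1$ it is negative, which is the correct regime for the forward Hardy operator $\int_0^t$, and when $\delta>1$ it is positive, the correct regime for the adjoint operator $\int_t^\infty$. This is why the definition of $R_\delta$ switches form at $\delta=1$, and why $\delta\neq1$ is required.

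The cleanest route, which avoids invoking a black-box Hardy inequality, is to exploit monotonicity of $r$ directly. Consider the case $\delta<1$. For fixed $t$, split $R_\delta(t)=\int_0^t s^\beta r(s)\,ds$ and use $r(s)\ge r(t)$ for $s\le t$ to get a lower bound, but for the upper bound we instead use that $r$ is decreasing to compare $R_\delta(t)$ with $t^{\beta+1} r(t)$ up to a constant on dyadic scales. Concretely, I would dyadically decompose: writing $t\in[2^k,2^{k+1})$ and $R_\delta(t)=\sum_{j\le k}\int_{2^{j}}^{2^{j+1}} s^\beta r(s)\,ds + \int_{2^k}^t$, and bounding each block $\int_{2^j}^{2^{j+1}} s^\beta r(s)\,ds \le C\, 2^{j(\beta+1)} r(2^j)$ using monotonicity of $r$. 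Then $R_\delta(t)^\lambda \le C\big(\sum_{j\le k} 2^{j(\beta+1)} r(2^j)\big)^\lambda$, and plugging into the left side, summing the geometric-type series in $k$ (here the negative exponent $(\delta-1)\lambda<0$ makes the outer sum over $k$ converge and dominate the inner terms), and re-expanding each $2^{j(\beta+1)\lambda} r(2^j)^\lambda$ against the integral $\int_{2^j}^{2^{j+1}} r(t)^\lambda t^{(\beta+\delta)\lambda}\frac{dt}{t}$, again via monotonicity of $r$, gives the claim. The case $\delta>1$ is entirely symmetric with the roles of "small $t$" and "large $t$" interchanged, using the positivity of $(\delta-1)\lambda$ to make the dyadic sum converge in the other direction.

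Alternatively, and perhaps more efficiently for the writeup, one can integrate by parts in the defining integral of $R_\delta$ or use the substitution $t=e^\tau$ to reduce to a convolution inequality with an exponentially decaying kernel on $\mathbb{R}$, where Young's inequality for convolutions on $L^\lambda$ (valid for $\lambda\ge1$; for $0<\lambda<1$ one uses the elementary inequality $(\sum a_i)^\lambda\le\sum a_i^\lambda$ together with a Minkowski-type step, or simply notes the statement is typically applied with $\lambda\ge1$) finishes the argument with the constant $C$ depending only on $\beta,\delta,\lambda$ through the $L^1$ norm of the kernel. The main obstacle I anticipate is purely bookkeeping: tracking that all the exponents align so that the power weights separate correctly after the change of variables, and making sure the monotonicity of $r$ is used in the right direction in each of the two cases. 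There is no deep difficulty — this is a standard weighted Hardy inequality dressed up — but one must be careful that the inequality genuinely fails at $\delta=1$ (the endpoint exponent), which is the reason the hypothesis excludes it and the reason the lemma is stated as it is.
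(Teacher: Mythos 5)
Your argument is sound, but note that it does not parallel anything in the paper: the paper gives no proof of Lemma \ref{trickint} at all, it simply refers to Lemma 2.1 of \cite{AFT}, so what you propose is a self-contained substitute rather than a reproduction. The substance of your sketch is correct: rewriting the left-hand side as $\int_0^\infty R_\delta(t)^\lambda t^{(\delta-1)\lambda}\frac{dt}{t}$, the case $\delta<1$ (negative exponent) pairs with the operator $\int_0^t$ and the case $\delta>1$ with $\int_t^\infty$; the block bound $\int_{2^j}^{2^{j+1}}s^\beta r(s)\,ds\le C\,2^{j(\beta+1)}r(2^j)$ uses monotonicity, the geometric decay forced by the sign of $(\delta-1)\lambda$ makes the outer dyadic sum collapse, and monotonicity is used once more to re-absorb $2^{j(\beta+\delta)\lambda}r(2^j)^\lambda$ into $\int_{2^{j-1}}^{2^j}r(t)^\lambda t^{(\beta+\delta)\lambda}\frac{dt}{t}$; your reduction to the case where the right-hand side is finite (which, by monotonicity, forces $R_\delta(t)<\infty$) is also fine. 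Two points need care in the write-up. First, do not fall back on the remark that the statement is typically applied with $\lambda\ge1$: in this paper the lemma is invoked with $\lambda=(p-1)q$ and $\lambda=q/p'$ (see the proofs of Lemmas \ref{dupap} and \ref{!!dupa1}), and since $0<q\le\infty$ and $p$ may be close to $1$ these exponents can be smaller than $1$; this is precisely the regime where the unweighted Hardy and Young inequalities fail for general integrands and the monotonicity of $r$ is indispensable, so the dyadic argument with $(\sum_j a_j)^\lambda\le\sum_j a_j^\lambda$ must actually be carried out. Second, for $\lambda>1$ the passage from $\big(\sum_{j\le k}2^{j(\beta+1)}r(2^j)\big)^\lambda$ to a sum of $\lambda$-th powers is not subadditivity; there you should either insert a geometrically decaying weight and apply H\"older, or use your cleaner alternative, namely the substitution $t=e^\tau$ turning $R_\delta(t)t^{\delta-1}$ into a convolution with the kernel $e^{(\delta-1)x}\chi_{\{x>0\}}$ (or its mirror image), which is integrable exactly because $\delta\neq1$, and conclude by Young's inequality. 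With the two regimes $\lambda\le1$ and $\lambda>1$ treated separately along these lines the proof is complete, and its added value over the paper is precisely that it is self-contained where the paper only offers a citation.
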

\begin{proof}
For the proof see \cite{AFT} Lemma 2.1.
\end{proof}

\subsection{Others useful results}\label{mala}
 
In this Section we prove the almost everywhere convergence of the gradients of $\{\un\}$ and $\{\wn\}$.
\begin{lemma}\label{22-3}
Let  $\{\un\}\subset W^{1,p}_0(\Omega)$ be the sequence of \emph{approximating} solutions of \eqref{apprp}. Assume $f\in\elle1$, $E\in\left(\elle{p'}\right)^N$ and  moreover that there exists $u\in W^{1,s}_0(\Omega)$ with $s\ge1$ such that up to a subsequence $\un\rightharpoonup u$ in $W^{1,s}_0(\Omega)$. Hence, up to a further subsequence,
\begin{equation}\label{aecon}
\nabla \un\to \nabla u \ \ \ a.e. \ \ \ \mbox{in} \ \Omega.
\end{equation}
\end{lemma}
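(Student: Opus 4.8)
The plan is to follow the by-now classical strategy of Boccardo–Murat for proving almost everywhere convergence of gradients for approximating problems, adapted to the present non-coercive setting with a convection term. First I would fix notation: set $u_n$ the solutions of \eqref{apprp}, and denote by $T_k$ the truncation at level $k$. The goal is to show that $T_k(u_n) \to T_k(u)$ strongly in $W^{1,p}_0(\Omega)$ for every fixed $k>0$, since this immediately yields a subsequence along which $\nabla T_k(u_n) \to \nabla T_k(u)$ a.e., and then a diagonal argument over $k \in \mathbb{N}$ gives $\nabla u_n \to \nabla u$ a.e. in $\Omega$ (using that $u_n \to u$ a.e., up to subsequence, by compact Sobolev embedding of $W^{1,s}_0(\Omega)$).

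The core step is the energy estimate. I would test \eqref{apprp} with $\phi = T_\delta(u_n - T_k(u))$ for small $\delta>0$; note this is an admissible test function in $W^{1,p}_0(\Omega)$. On the set where $|u_n - T_k(u)| \le \delta$ one has $\nabla \phi = \nabla u_n - \nabla T_k(u)$, and on that set $|u_n| \le k + \delta$, so the principal part $\int_\Omega a(x,\nabla u_n)\nabla\phi$ controls, via the monotonicity and coercivity in \eqref{ll5}, a quantity like $\alpha \int_{\{|u_n - T_k(u)|\le\delta\}} (a(x,\nabla T_k(u_n)) - a(x,\nabla T_k(u)))(\nabla T_k(u_n) - \nabla T_k(u))$ up to lower order terms. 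For the right-hand side: the datum term $\int_\Omega f_n \phi$ is bounded by $\delta \|f\|_{L^1}$ since $|\phi|\le\delta$ and $f_n \to f$ in $L^1$; the convection term $\int_\Omega \frac{|u_n|^{p-2}u_n}{1+\frac1n|u_n|^{p-1}} E_n(x)\nabla\phi$ is the delicate one — on the support of $\nabla\phi$ we have $|u_n|\le k+\delta$, so the coefficient is bounded by $(k+\delta)^{p-1}$ and $E_n \to E$ in $(L^{p'})^N$ (here I use the hypothesis $E \in (L^{p'})^N$), while $\nabla\phi$ is bounded in $(L^p)^N$; a careful splitting shows this term is controlled by $\nabla T_k(u_n) - \nabla T_k(u)$ times an $L^{p'}$ factor that vanishes in the limit, plus terms that are $o(1)$ as $n\to\infty$ followed by $\delta\to 0$.

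Passing to the limit $n\to\infty$ with $\delta$ fixed, using weak convergence $u_n \rightharpoonup u$ in $W^{1,s}_0(\Omega)$ and a.e. convergence $u_n \to u$, one finds that
\[
\limsup_{n\to\infty} \int_{\{|u_n - T_k(u)|\le\delta\}} \big(a(x,\nabla T_k(u_n)) - a(x,\nabla T_k(u))\big)\big(\nabla T_k(u_n) - \nabla T_k(u)\big) \le \omega(\delta),
\]
with $\omega(\delta)\to 0$. A standard Boccardo–Murat trick — writing the integral over all of $\Omega$ as the integral over $\{|u_n-T_k(u)|\le\delta\}$ plus an error over the complement that is estimated by Hölder using that $\{|u_n - T_k(u)|>\delta\}$ has measure going to zero uniformly and the integrand is equi-integrable — then gives that $(a(x,\nabla T_k(u_n)) - a(x,\nabla T_k(u)))(\nabla T_k(u_n) - \nabla T_k(u)) \to 0$ in $L^1(\Omega)$, hence a.e. along a subsequence. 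By the classical lemma (strict monotonicity of $a$ in \eqref{ll5} plus the coercivity/growth bounds), a.e. vanishing of this nonnegative quantity forces $\nabla T_k(u_n) \to \nabla T_k(u)$ a.e., and in fact strongly in $(L^p)^N$.

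The main obstacle I expect is the convection term: unlike the coercive case, one cannot absorb it globally, so the argument must exploit that the test function localizes everything to the region $\{|u_n| \le k+\delta\}$ where $|u_n|^{p-2}u_n$ is bounded, and then handle the product of the (only weakly convergent) $\nabla\phi$ against $E_n$, which requires the strong convergence $E_n \to E$ in $L^{p'}$ together with equi-integrability of $|\nabla u_n|^p$ on truncation sets — the latter being a consequence of the a priori estimates. Once the localization is in place the term is genuinely lower order and the classical scheme goes through; the diagonal extraction over $k$ finishes the proof of \eqref{aecon}.
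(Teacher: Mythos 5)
Your overall scheme is the one the paper actually follows: obtain the bound for $T_k(u_n)$ in $W^{1,p}_0(\Omega)$ by testing with $T_k(u_n)$ (Young's inequality plus $E\in\left(\elle{p'}\right)^N$), then test with $T_h(u_n-T_k(u))$ so that the convection coefficient is localized where $|u_n|\le k+h$, pass to the limit in $n$ and then in $h$, and finally deduce a.e.\ convergence of $\nabla T_k(u_n)$ for every $k$ and diagonalize. Your treatment of the convection term (bounded coefficient on the localized set, $E_n\to E$ strongly in $L^{p'}$, truncated gradients bounded and weakly convergent in $L^p$) is also the paper's.

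The gap is in your final deduction. You claim that $I^k_n:=\big(a(x,\nabla T_k(u_n))-a(x,\nabla T_k(u))\big)\nabla\big(T_k(u_n)-T_k(u)\big)\to 0$ in $L^1(\Omega)$ by adding to the $\limsup$ estimate an error over the complement, ``estimated by H\"older using that $\{|u_n-T_k(u)|>\delta\}$ has measure going to zero and the integrand is equi-integrable''. Neither ingredient is available: (i) $|\{|u_n-T_k(u)|>\delta\}|$ does not tend to zero for fixed $k$ and $\delta$ — it tends to $|\{|u|>k+\delta\}|$, positive in general; the set with vanishing measure is $\{|T_k(u_n)-T_k(u)|>\delta\}$ (convergence in measure of the truncates), and it is on this set that the paper splits; (ii) equi-integrability of $I^k_n$ is not known: the a priori information is only that $|\nabla T_k(u_n)|^p$ is bounded in $L^1(\Omega)$, and postulating equi-integrability of $I^k_n$ is essentially circular, since it amounts to the strong compactness you are trying to prove. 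The paper's way around this is the standard device of \cite{BCa}: estimate $\io (I^k_n)^{\theta}$ with $0<\theta<1$, splitting on $\{|T_k(u_n)-T_k(u)|>h\}$ and its complement and applying H\"older, so that only the uniform $L^1$ bound on $I^k_n$ is used; then $\io(I^k_n)^{\theta}\to0$ gives convergence in measure of $I^k_n$ to zero, hence a.e.\ convergence along a subsequence, and the strict monotonicity in \eqref{ll5} yields $\nabla T_k(u_n)\to\nabla T_k(u)$ a.e., which is all the lemma needs (strong $L^p$ convergence of the truncated gradients is neither required nor delivered by your argument as written). With this correction — and writing out the term $\int_{\{|u_n|>k\}\cap\{|u_n-T_k(u)|<h\}}a(x,\nabla u_n)\nabla T_k(u)$, handled in the paper through the weak $L^{p'}$ limit $\Psi_k$ of $a(x,\nabla T_{2k}(u_n))$ on the shrinking set $\{k<|u|<k+h\}$, which you only allude to as ``lower order terms'' — your proposal coincides with the paper's proof.
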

\begin{rem}
Notice that the assumption for $E(x)$ of the Lemma above is more general that \eqref{marp}.
\end{rem}
\begin{proof}Taking $T_k(\un)$ as test function in \eqref{apprp} and using Young inequality it follows that for any $\epsilon>0$
\[
\alpha\io|\nabla T_k(\un)|^p\le C_{\epsilon}k^p\io|E|^{p'} +\epsilon\io|\nabla T_k(\un)|^p+k\io|f|,
\]
with $C_{\epsilon}=\epsilon^{-\frac{1}{p-1}}$. Thanks to the previous estimate we deduce that for every $k>0$  
\begin{equation}\label{trunc}
|\nabla T_k(u)|\in \elle p \ \ \ \mbox{and} \ \ \ T_k(\un)\to T_k(u) \ \ \ \mbox{weakly in } W^{1,p}_0(\Omega).
\end{equation}
In order to prove \eqref{aecon} let us define for $k>0$ fixed 
\[
I_n^k(x)= [a(x,\nabla T_k(\un))-a(x,\nabla T_k(u))]\nabla(T_k(\un)-T_k(u))
\]
and consider, for $0<\theta<1$ and $0<h<k$,
\[
\io I_n^k(x)^{\theta}dx=\int_{\{|T_k(\un)-T_k(u)|>h\}}I_n^k(x)^{\theta}dx+\int_{\{|T_k(\un)-T_k(u)|\le h\}}I_n^k(x)^{\theta}dx
\]
\[
\le \left(\io I_n^k(x)dx\right)^{\theta}|\{|T_k(\un)-T_k(u)|>h\}|^{1-\theta}+\left(\int_{\{|T_k(\un)-T_k(u)|\le h\}}I_n^k(x)dx\right)^{\theta}|\Omega|^{1-\theta}.
\]
Note that, for every fixed $h$, the first term in the right hand side above goes to zero as $n\to\infi$ because of \eqref{trunc} and thanks to the convergence in measure of $T_k(\un)$. We claim that also the second term converge to zero taking the limit at first with respect to $n\to \infi$ and then with respect to $h\to 0$. Once this claim is proved, it follows that
\[
\lim_{n\to\infi}\io I_n^k(x)^{\theta}dx=0,
\]
from which we deduce, like in \cite{BCa}, that $\nabla T_k(\un)$ almost everywhere converges to
$\nabla T_k(u)$ for every $k> 0$. An this is enough to infer \eqref{aecon} as in \cite{Po}.\\ In order to prove the claim let us take $T_h(\un-T_k(u))$, with $0<h<k$, as a test function in \eqref{apprp}. After simple manipulations we obtain that
\[
-\int_{\{|\un-T_k(u)|<h\}}a(x,\nabla G_k(\un))\nabla T_k(u)+\io a(x,\nabla T_k(\un))\nabla T_h(T_k(\un)-T_k(u))
\]
\[
 \le h\int|f|+\io\frac{|\un|^{p-2}\un }{1+\frac{1}{n}|\un|^{p-1}}E_n(x)\nabla T_h(\un-T_k(u))
\]
and also that
\[
0\le\int_{\{|T_k(\un)-T_k(u)|\le h\}}I_n^k(x)dx=\io \big[a(x,\nabla T_k(\un))- a(x,\nabla T_k(u))\big]\nabla T_h(T_k(\un)-T_k(u)) 
\]
\[
\le h\int|f|+\io\frac{|\un|^{p-2}\un }{1+\frac{1}{n}|\un|^{p-1}}E_n(x)\nabla T_h(\un-T_k(u))
\]
\[
+\int_{\{|\un|>k\}\cap\{|\un-T_k(u)|<h\}}a(x,\nabla\un)\nabla T_k(u)-\io a(x,\nabla T_k(u))\nabla T_h(T_k(\un)-T_k(u)).
\]
Noticing that $\{|\un-T_k(u)|<h\}\subset \{|\un|\le h+k\}\subset \{|\un|\le 2k\}$, that the sequence $\{|a(x,\nabla T_{2k}(\un))|\}$ is bounded in $\elle{p'}$ and recalling \eqref{trunc}, we can pass to the limit with respect to $n\to \infi$ into the previous inequality and obtain
\[
\limsup_{n\to\infi}\int_{\{|T_k(\un)-T_k(u)|\le h\}}I_n^k(x)dx\le\io|u|^{p-2}u\,E\nabla T_h(G_k(u))+h\int|f|
\]
\[
+\int_{\{k<|u|<k+h\}}\Psi_k\nabla T_k(u).
\]
where $\Psi_k\in\left(\elle{p'}\right)^N$ is the weak limit of $a(x,\nabla T_{2k}(\un))$.
Letting $h\to0$ we prove the claim and conclude the proof of the Lemma.
\end{proof}

\begin{lemma}\label{aenodiv}
Let $\{\wn\}\subset W^{1,p}_0(\Omega)$ be the sequence of approximating solution of \eqref{!!appr1}. Assume $f\in\elle 1$, $|E|\in\emme N$ and  moreover that there exists $w\in W^{1,s}_0(\Omega)$ with $s>\frac{(p-1)N}{N-1}$ such that up to a subsequence $\wn\rightharpoonup w$ in $W^{1,s}_0(\Omega)$. Hence, up to a further subsequence,
\begin{equation}\label{aebis}
\nabla \wn\to \nabla w \ \ \ a.e. \ \ \ \mbox{in} \ \Omega.
\end{equation}
\end{lemma}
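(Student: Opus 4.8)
The plan is to follow, essentially line by line, the scheme of Lemma~\ref{22-3}; the only point that requires a genuinely different treatment is the (now first order) lower order term, and even there the situation turns out to be slightly simpler.

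First I would isolate the uniform bound on which everything rests. Since $\wn\rightharpoonup w$ in $W^{1,s}_0(\Omega)$, the sequence $\{|\nabla\wn|^{p-1}\}$ is bounded in $L^{\frac{s}{p-1}}(\Omega)$ and, by hypothesis, $\frac{s}{p-1}>\frac{N}{N-1}=N'$; since on the bounded set $\Omega$ one has $L^{\frac{s}{p-1}}(\Omega)\subset L^{N',1}(\Omega)$, while $|E_n|\le|E|$ and $\|E_n\|_{\emme N}\le\|E\|_{\emme N}$, H\"older's inequality in Lorentz spaces yields a constant $C$, independent of $n$, with
\[
\io|E_n|\,|\nabla\wn|^{p-1}\le C ;
\]
in particular $\io|E_n|\,\frac{|\nabla\wn|^{p-1}}{1+\frac1n|\nabla\wn|^{p-1}}\le C$ as well. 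Testing \eqref{!!appr1} with $T_k(\wn)$, using the coercivity in \eqref{ll5}, the bound $|T_k(\wn)|\le k$ and the estimate above, one gets $\alpha\io|\nabla T_k(\wn)|^p\le k\big(C+\|f\|_{\elle1}\big)$, so that $\{T_k(\wn)\}$ is bounded in $W^{1,p}_0(\Omega)$ for every $k>0$; since $\wn\to w$ a.e.\ in $\Omega$ (up to a further subsequence, by the compact Sobolev embedding), it follows that $|\nabla T_k(w)|\in\elle p$, that $T_k(\wn)\rightharpoonup T_k(w)$ in $W^{1,p}_0(\Omega)$, and that $\{a(x,\nabla T_{2k}(\wn))\}$ is bounded in $\big(\elle{p'}\big)^N$.

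Then I would run the monotonicity argument exactly as in Lemma~\ref{22-3}. Setting
\[
I_n^k(x)=\big[a(x,\nabla T_k(\wn))-a(x,\nabla T_k(w))\big]\nabla\big(T_k(\wn)-T_k(w)\big),
\]
and splitting $\io (I_n^k)^\theta$, $0<\theta<1$, according to whether $|T_k(\wn)-T_k(w)|>h$ or $\le h$, the first piece vanishes as $n\to\infty$ by the weak convergence of $T_k(\wn)$ and its convergence in measure, so one is reduced to showing
\[
\lim_{h\to0}\ \limsup_{n\to\infty}\ \int_{\{|T_k(\wn)-T_k(w)|\le h\}}I_n^k(x)\,dx=0 .
\]
To this end one tests \eqref{!!appr1} with $T_h(\wn-T_k(w))$, $0<h<k$; on the support of $\nabla T_h(\wn-T_k(w))$ one has $|\wn|\le 2k$, so the left hand side is manipulated exactly as in Lemma~\ref{22-3} (using the weak $\elle{p'}$-convergence of $a(x,\nabla T_{2k}(\wn))$ to some $\Psi_k$). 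On the right hand side, however, since $|T_h(\wn-T_k(w))|\le h$ pointwise,
\[
\Big|\io E_n\cdot\frac{|\nabla\wn|^{p-2}\nabla\wn}{1+\frac1n|\nabla\wn|^{p-1}}\,T_h(\wn-T_k(w))\Big|\le h\io|E_n|\,|\nabla\wn|^{p-1}\le hC ,
\]
and similarly $\big|\io f_n\,T_h(\wn-T_k(w))\big|\le h\|f\|_{\elle1}$; letting first $n\to\infty$ and then $h\to 0$, every surviving term vanishes (the last one being $\int_{\{k<|w|<k+h\}}\Psi_k\nabla T_k(w)$, which goes to $0$ by absolute continuity of the integral). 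Hence $\io (I_n^k)^\theta\to 0$, which gives $\nabla T_k(\wn)\to\nabla T_k(w)$ a.e.\ for every $k>0$ as in \cite{BCa}, and therefore $\nabla\wn\to\nabla w$ a.e.\ in $\Omega$ as in \cite{Po}.

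The main obstacle is really only the first step — keeping the drift term harmless — and it is precisely here that the hypothesis $s>\frac{(p-1)N}{N-1}$ (as opposed to the much weaker $s\ge1$ of Lemma~\ref{22-3}) enters: it is what makes $|E|\,|\nabla\wn|^{p-1}$ uniformly bounded in $\elle1$. Once this bound is available, the drift term is actually \emph{easier} to handle than the convection term of Lemma~\ref{22-3}, since the elementary pointwise estimate $|T_h(\cdot)|\le h$ already makes its contribution $O(h)$, with no delicate passage to the limit required.
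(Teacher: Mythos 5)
Your proposal is correct and follows essentially the same strategy as the paper: test with $T_k(\wn)$ to get the truncates bounded in $W^{1,p}_0(\Omega)$, then reduce to the situation of Lemma \ref{22-3} and test with $T_h(\wn-T_k(w))$, using that the drift contribution is $O(h)$ thanks to the uniform $\elle1$ bound on $|E_n||\nabla\wn|^{p-1}$. The only (immaterial) difference is how that bound is obtained: you use Lorentz--H\"older with $E\in\emme N$ and $|\nabla\wn|^{p-1}\in L^{\frac{N}{N-1},1}(\Omega)$, whereas the paper uses $E\in\elle{r'}$ with $r'=\big(\tfrac{s}{p-1}\big)'<N$ and ordinary H\"older.
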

\begin{proof}By hypothesis the sequence $\{|\nabla\wn|^{p-1}\}$ is bounded in ${\elle r}$ with $r=\frac{s}{p-1}>\frac{N}{N-1}$ and moreover $r'<N$. Hence taking $T_k(\wn)$ as a test function in \eqref{!!appr1}, we obtain
\[
\alpha\io|\nabla T_k(\wn)|^p\le k\left[\io|f|+\io|E_n(x)||\nabla\wn|^{p-1}\right]\]\[ \le k \left[\|f\|_{\elle 1}+\|E\|_{\elle {r'}}\||\nabla\wn|^{p-1}\|_{\elle {r}}\right],
\]
that implies 
\[
T_k(\wn)\rightharpoonup T_k(w) \ \ \ \mbox{in} \ \ \ W^{1,p}_0(\Omega) \ \ \ \mbox{for any} \ \ \ k>0.
\]

Notice that we are in the same situation of Lemma \ref{22-3} above. Thus we conclude the proof if we show that
\[
\limsup_{n\to\infi}\io \big[a(x,\nabla T_k(\wn))- a(x,\nabla T_k(w))\big]\nabla T_h(\wn-T_k(w))=0.
\]
As before let us thus choose $T_h(\wn-T_k(w))$, with $0<h<k$, as test function in \eqref{!!appr1}. Manipulating the resulting equation, we obtain
\[
\io \big[a(x,\nabla T_k(\wn))- a(x,\nabla T_k(w))\big]\nabla T_h(\wn-T_k(w))\] \[\le h\left[\|f\|_{\elle 1}+\|E\|_{\elle {r'}}\||\nabla\wn|^{p-1}\|_{\elle {r}}\right]
+\int_{\{|\wn|>k\}\cap\{|\wn-T_k(w)|<h\}}a(x,\nabla\wn)\nabla T_k(w).
\]
\[
-\io a(x,\nabla T_k(w))\nabla T_h(\wn-T_k(w)).
\]
Noticing that $\{|\wn-T_k(w)|<h\}\subset \{|\wn|\le h+k\}\subset \{|\wn|\le 2k\}$ we can pas to the limit with respect to $n\to \infi$ and obtain
\[
\limsup_{n\to\infi}\io a(x,\nabla T_k(\wn))\nabla T_h(\wn-T_k(w))\le C h
+\int_{\{k<|w|<k+h\}}\Psi_k\nabla T_k(w),
\]
where $\Psi_k\in\left(\elle{p'}\right)^N$ is the weak limit of $a(x,\nabla T_{2k}(\wn))$. Letting $h\to0$ we  conclude the proof of the Lemma.
\end{proof}
\begin{lemma}\label{gron}
Given the function $\lambda,\ \gamma, \ \varphi, \rho$ defined in $(0,+\infty)$, suppose that $\lambda,\ \gamma\ge 0$ and that $\lambda\gamma, \ \lambda\varphi$ and $\lambda\rho$ belong to $L^1(0,\infty)$. If for almost every $t\ge0$ we have
\[
\varphi(t)\le \rho(t)+\gamma(t)\int_t^{+\infty}\lambda(\tau)\varphi(\tau)d\tau,
\]
then for almost every $t\ge0$
\[
\varphi(t)\le \rho(t)+\gamma(t)\int_t^{+\infty}\rho(t)\lambda(\tau)e^{\int_t^{\tau}\lambda(s)\gamma(s)ds}d\tau.
\]
\end{lemma}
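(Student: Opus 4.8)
The plan is to reduce the integral inequality to a first-order differential inequality for the tail function
\[
\psi(t):=\int_t^{+\infty}\lambda(\tau)\varphi(\tau)\,d\tau ,
\]
and then to resolve that differential inequality by an integrating factor. First I would record the needed regularity: since $\lambda\varphi\in L^1(0,\infty)$, the function $\psi$ is absolutely continuous on every bounded interval, satisfies $\psi(T)\to 0$ as $T\to+\infty$, and $\psi'(t)=-\lambda(t)\varphi(t)$ for a.e. $t>0$. Likewise set $\Lambda(t):=\int_0^t\lambda(s)\gamma(s)\,ds$; because $\lambda\gamma\in L^1(0,\infty)$ the function $\Lambda$ is absolutely continuous, non-decreasing, has a finite limit $\Lambda(\infty)$, and $\Lambda'(t)=\lambda(t)\gamma(t)$ a.e. In particular $e^{\Lambda}$ is bounded on $(0,\infty)$, so $e^{\Lambda}\lambda\rho\in L^1(0,\infty)$ and $e^{\Lambda}\lambda\gamma\,\psi\in L^1(0,\infty)$ (the latter since $\psi$ is bounded).

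Next I would multiply the hypothesis $\varphi(t)\le\rho(t)+\gamma(t)\psi(t)$ by $\lambda(t)\ge0$, obtaining $\lambda\varphi\le\lambda\rho+\lambda\gamma\psi$ a.e., hence $-\psi'(t)=\lambda(t)\varphi(t)\le\lambda(t)\rho(t)+\lambda(t)\gamma(t)\psi(t)$, that is
\[
\psi'(t)+\lambda(t)\gamma(t)\psi(t)\ \ge\ -\lambda(t)\rho(t)\qquad\text{for a.e. }t>0 .
\]
The product $t\mapsto\psi(t)e^{\Lambda(t)}$ is absolutely continuous on bounded intervals (product of an absolutely continuous function and a bounded absolutely continuous one), with $\tfrac{d}{dt}\big(\psi(t)e^{\Lambda(t)}\big)=e^{\Lambda(t)}\big(\psi'(t)+\lambda(t)\gamma(t)\psi(t)\big)\ge -e^{\Lambda(t)}\lambda(t)\rho(t)$ for a.e. $t$, and its derivative lies in $L^1(0,\infty)$ by the integrability noted above. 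Integrating this inequality on $(t,+\infty)$ and using $\psi(T)e^{\Lambda(T)}\to0$ as $T\to+\infty$ gives $-\psi(t)e^{\Lambda(t)}\ge-\int_t^{+\infty}e^{\Lambda(\tau)}\lambda(\tau)\rho(\tau)\,d\tau$, i.e.
\[
\psi(t)\ \le\ \int_t^{+\infty}\lambda(\tau)\rho(\tau)\,e^{\Lambda(\tau)-\Lambda(t)}\,d\tau
\ =\ \int_t^{+\infty}\lambda(\tau)\rho(\tau)\,e^{\int_t^{\tau}\lambda(s)\gamma(s)\,ds}\,d\tau .
\]
Substituting this estimate for $\psi(t)$ back into the hypothesis $\varphi(t)\le\rho(t)+\gamma(t)\psi(t)$ yields exactly the claimed bound (with $\rho(\tau)$ in the integrand, correcting the evident misprint $\rho(t)$ in the statement).

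The argument is entirely elementary, and I do not anticipate a genuine obstacle. The only points that deserve care are the bookkeeping items mentioned above: that the tail integral $\psi$ is absolutely continuous with a.e. derivative $-\lambda\varphi$, that the Leibniz rule is valid for $\psi e^{\Lambda}$ in the absolutely continuous/a.e. sense, and that the boundary term at $+\infty$ vanishes — each of which follows from the hypotheses $\lambda\varphi,\ \lambda\gamma,\ \lambda\rho\in L^1(0,\infty)$ together with the boundedness of $e^{\Lambda}$. An alternative would be a Picard-type iteration of the integral inequality, but the integrating-factor route is shorter and avoids convergence bookkeeping; no positivity of $\rho$ or $\varphi$ is needed, only $\lambda,\gamma\ge0$, since we only ever multiply the hypothesis by the nonnegative factor $\lambda$.
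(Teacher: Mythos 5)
Your argument is correct and complete. Note, though, that the paper does not prove this lemma at all: it simply cites Lemma 6.1 of \cite{HA} (Amann), so there is no internal proof to compare against. What you supply is precisely the standard backward-Gronwall argument that underlies that reference: pass to the tail function $\psi(t)=\int_t^{+\infty}\lambda\varphi$, observe $\psi'=-\lambda\varphi$ a.e.\ with $\psi$ absolutely continuous and vanishing at infinity, multiply the hypothesis by $\lambda\ge0$, use the integrating factor $e^{\Lambda}$ with $\Lambda(t)=\int_0^t\lambda\gamma$ (bounded because $\lambda\gamma\in L^1$), integrate on $(t,T)$, let $T\to\infty$, and substitute the resulting bound on $\psi$ back into the hypothesis using $\gamma\ge0$. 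All the bookkeeping you flag (absolute continuity of the product, integrability of $e^{\Lambda}\lambda\rho$ and $e^{\Lambda}\lambda\gamma\psi$, vanishing of the boundary term) is justified exactly by the stated hypotheses, and you correctly note that no sign condition on $\varphi$ or $\rho$ is needed. Your reading of $\rho(t)$ in the integrand as a misprint for $\rho(\tau)$ is also right: the version you prove is the classical statement, and it is the form actually invoked in the proof of Lemma \ref{sgo}, where the analogue of $\rho$ (involving $\int_0^{s}\overline{f}$) depends on the integration variable. So your proof is a legitimate, self-contained replacement for the paper's external citation, at the cost of a few lines of routine verification that the paper chose to outsource.
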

\begin{proof}
See \cite{HA} Lemma 6.1.
\end{proof}

\section{Proof of the results}\label{proofof}
 
\subsection{Convection term}
We need three preliminary Lemmas. The first one is devoted to the achievement of a point-wise estimate for the decreasing rearrangement of $\un$, the solution of \eqref{apprp}, the second Lemma gives the estimate relative to the decreasing rearrangement of $\nabla\un$, while the third one provides the required Lorentz bounds for the sequences $\{\un\}$ and $\{|\nabla\un|\}$. 
\begin{lemma}\label{regup} Let us assume \eqref{ll5} and \eqref{marp}. 
For any $n\in\mathbb{N}$, let $\un$ be the solution of \eqref{apprp} and denote with $\overline{u}_n$ its decreasing rearrangement. It follows that
\begin{equation}\label{10-03p}
\overline{u}_n(t)\le \overline{v}(t):= \frac{C}{t^{\gamma}}\int_t^{|\Omega|}s^{\frac{p'}{N}+\gamma-1}\tilde{f}(s)^{\frac{1}{p-1}}ds,
\end{equation}
where $C=C(N,\alpha,p,E,m)$ and $\gamma<\frac{N-pm}{(p-1)Nm}$.
\end{lemma}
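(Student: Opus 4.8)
The plan is to run Talenti's symmetrization scheme, arranged so that the non‑coercive convection term produces a Gronwall‑type integral inequality for $\overline{u}_n$ rather than obstructing the a priori estimate. \emph{First} I would fix $k>0$, $0<h<1$ and test \eqref{apprp} with $\phi=\tfrac1h T_h(G_k(\un))$, so that $\nabla\phi=\tfrac1h\,\nabla\un\,\chi_{\{k<|\un|<k+h\}}$. The coercivity in \eqref{ll5} bounds the left‑hand side below by $\tfrac{\alpha}{h}\int_{\{k<|\un|<k+h\}}|\nabla\un|^p$; on the thin strip $\bigl|\tfrac{|\un|^{p-2}\un}{1+\frac1n|\un|^{p-1}}\bigr|\le|\un|^{p-1}\le(k+h)^{p-1}$, and $|T_h(G_k(\un))|\le h$ gives $\bigl|\io\fn\phi\bigr|\le\int_{\{|\un|>k\}}|\fn|$. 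Using H\"older on the strip for the convection integral and letting $h\to0$ (by a.e.\ differentiability of the super‑level integrals and \eqref{24-5}) yields, for a.e.\ $k>0$,
\[
\alpha\,\eta_n(k)\ \le\ k^{p-1}\,\zeta_n(k)^{1/p'}\,\eta_n(k)^{1/p}\ +\ \int_{\{|\un|>k\}}|\fn|\,,
\]
where $\eta_n(k):=-\tfrac{d}{dk}\int_{\{|\un|>k\}}|\nabla\un|^p$ and $\zeta_n(k):=-\tfrac{d}{dk}\int_{\{|\un|>k\}}|E_n|^{p'}=D_n\bigl(A_n(k)\bigr)\bigl(-A_n'(k)\bigr)$, with $D_n$ the pseudo‑rearrangement of $|E_n|^{p'}$ with respect to $\un$ (Lemma \ref{dife}).

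\emph{Next}, dividing by $\eta_n(k)^{1/p}$ to eliminate $\eta_n$ and inserting Talenti's isoperimetric inequality \eqref{talenti} for $\un$, $\sigma_N\le A_n(k)^{\frac1N-1}(-A_n'(k))^{1/p'}\eta_n(k)^{1/p}$ with $\sigma_N=N\omega_N^{1/N}$, together with $\int_{\{|\un|>k\}}|\fn|\le\int_0^{A_n(k)}\overline f=A_n(k)\tilde f(A_n(k))$, one solves for $-A_n'(k)$ and passes to the rearrangement variable $t=A_n(k)$, so that $k=\overline{u}_n(t)$ and $\tfrac1{-A_n'(k)}=-\overline{u}_n'(t)$ by Proposition \ref{09-10}. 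This gives a differential inequality $-\overline{u}_n'(t)\le t^{-1}\overline{u}_n(t)\,\Theta_n(t)+C\,t^{\frac{p'}N-1}\tilde f(t)^{\frac1{p-1}}$, where $\Theta_n$ carries the appropriate power of $D_n$. Integrating on $(t,|\Omega|)$ with $\overline{u}_n(|\Omega|)=0$, and using that by \eqref{24-5bis} and Lemma \ref{inl} $\int_0^tD_n(s)\,ds\le\int_0^t\overline{|E_n|^{p'}}(s)\,ds$, which by \eqref{marp} is of order $B^{p'}t^{1-p/N}$ for the $\mathcal E$–part while the bounded part $\mathcal F$ only yields the strictly more integrable weight $s^{\frac1N-1}$, one removes the pointwise dependence on $D_n$ and reaches
\[
\overline{u}_n(t)\ \le\ \gamma\int_t^{|\Omega|}\frac{\overline{u}_n(s)}{s}\,ds\ +\ C_{\mathcal F}\int_t^{|\Omega|}\overline{u}_n(s)\,s^{\frac1N-1}\,ds\ +\ \rho(t)\,,\qquad \rho(t):=C\int_t^{|\Omega|}s^{\frac{p'}N-1}\tilde f(s)^{\frac1{p-1}}\,ds\,,
\]
with $C_{\mathcal F}$ depending on $\|\mathcal F\|_{\infty}$ and $\gamma$ depending only on $N,p,\alpha,B$. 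The point of \eqref{marp}, rewritten as $B<\alpha^{1/(p-1)}\omega_N^{1/N}\tfrac{N-pm}{(p-1)m}=\alpha^{1/(p-1)}\sigma_N\tfrac{N-pm}{(p-1)Nm}$ (the factor $N=\sigma_N/\omega_N^{1/N}$ being exactly the isoperimetric constant of \eqref{talenti}), is that it makes it possible to take $\gamma<\tfrac{N-pm}{(p-1)Nm}$.

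\emph{Finally}, I would apply Lemma \ref{gron} with $\varphi=\overline{u}_n$, $\gamma(\cdot)\equiv1$, $\lambda(s)=\gamma/s+C_{\mathcal F}s^{\frac1N-1}$ and $\rho$ as above: since $\lambda$ is integrable at $0$ apart from the $\gamma/s$ term, the $\mathcal F$–part only multiplies the estimate by a finite constant of the form $e^{C\|\mathcal F\|_\infty|\Omega|^{1/N}}$ and does not alter the exponent, so the exponential kernel behaves like $(\sigma/t)^{\gamma}$. Inserting $\rho$ and exchanging the order of integration over $t\le\sigma\le s\le|\Omega|$ (with $\int_t^s\sigma^{\gamma-1}\,d\sigma\le s^{\gamma}/\gamma$) bounds the right‑hand side by $\tfrac{C}{t^{\gamma}}\int_t^{|\Omega|}s^{\frac{p'}N+\gamma-1}\tilde f(s)^{\frac1{p-1}}\,ds$, which is exactly \eqref{10-03p} with $C=C(N,\alpha,p,E,m)$ and $\gamma<\tfrac{N-pm}{(p-1)Nm}$.

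The step I expect to be the main obstacle is the constant bookkeeping in the second paragraph: one must extract the sharp coefficient $\gamma$—with $\sigma_N=N\omega_N^{1/N}$ landing in precisely the right place so that \eqref{marp} delivers $\gamma<\tfrac{N-pm}{(p-1)Nm}$—while the convection contribution is accessible only through the pseudo‑rearrangement $D_n$ of $|E_n|^{p'}$, which, because $\mathcal E\in(M^{N/(p-1)})^N$ rather than $(L^{N/(p-1)})^N$, is controlled solely in integral average by \eqref{24-5bis}. This is what forces the integration in $t$ before the Marcinkiewicz bound on $D_n$ is used, and what confines the bounded component $\mathcal F$ to a harmless, strictly more integrable term in the Gronwall step.
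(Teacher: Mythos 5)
Your overall scheme is the paper's: the level-set test function $\tfrac1hT_h(G_k(\un))$, H\"older on the strip, Talenti's inequality \eqref{talenti} used twice, the pseudo-rearrangements of $\mathcal F$ and $\mathcal E$ from Lemma \ref{dife}, a differential inequality for $\overline{u}_n$ (the paper's \eqref{SB}), and a Gronwall-type integration ending with the Fubini computation; your identification of the threshold $B<\alpha^{1/(p-1)}\sigma_N\tfrac{N-pm}{(p-1)Nm}$ and your final exchange of integrals are correct. The genuine gap is the middle step, where you assert that the averaged control $\int_0^tD_n\le\int_0^t\overline{|E_n|^{p'}}$ lets you ``remove the pointwise dependence on $D_n$'' and pass to the clean-kernel inequality $\overline u_n(t)\le\gamma\int_t^{|\Omega|}\overline u_n(s)s^{-1}ds+C_{\mathcal F}\int_t^{|\Omega|}\overline u_n(s)s^{\frac1N-1}ds+\rho(t)$ before invoking Lemma \ref{gron}. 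Two things are missing there. First, the convection contribution enters through $\mathcal D_n(s)^{1/p}s^{\frac1N-1}$, not through $\mathcal D_n(s)$ itself, so \eqref{24-5bis} is usable only after a Young inequality with weight $B$ followed by an integration by parts, which gives $\int_t^s\mathcal D_n(\tau)^{1/p}\tau^{\frac1N-1}d\tau\le B\log(s/t)+\tfrac{NB}{p(N-p)}+C\|\mathcal F\|_\infty|\Omega|^{1/N}$. Second, and more seriously, the additive constant here is not removable (the $\mathcal E$-mass may concentrate just above the level $t$), so when you trade the $D_n$-kernel for $\gamma/s$ by comparing primitives through the monotonicity of $\overline u_n$, you inevitably pick up an extra term of size roughly $\tfrac{\gamma N}{p(N-p)}\,\overline u_n(t)$ on the right; absorbing it into the left requires that coefficient to be smaller than $1$, which \eqref{marp} does not guarantee (it fails for $p$ close to $1$ with $B$ close to the threshold). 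The paper avoids this precisely by not cleaning the kernel before integrating: it keeps $\lambda_n(s)=c\,\mathcal D_n(s)^{1/p}s^{\frac1N-1}$ inside the exponential integrating factor $R_n$ (equivalently, one could apply Lemma \ref{gron} with this $\lambda_n$, as is done in Lemma \ref{sgo} for the drift case) and only afterwards bounds $R_n(s)\le C(s/t)^{\gamma}$, so the additive constant enters only multiplicatively as $e^{c}$. Reordering your argument in this way repairs it, and then your concluding Fubini step goes through as written.
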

\begin{rem}
In order to better understand \eqref{10-03p} let us set, in the special case $p=2$ and with a slight abuse of notation,
\[
v(x)=\overline{v}(\omega_N|x|^N)=\frac{C}{\left(\omega_N|x|^N\right)^{\gamma}}\int_{\omega_N|x|^N}^{|\Omega|}s^{\frac{2}{N}-1+\gamma}\tilde{f}(s)ds|
\]
and notice that it solves
\begin{equation} \label{simmetrizzato}
\begin{cases}\displaystyle
-\Delta v=\gamma NC\mbox{div}\left(v\,\frac{x}{|x|^2}\right)+\omega_N^{\frac2N}N^2C\tilde{f}(\omega_n|x|^N) & \mbox{in $B_{\Omega}$,} \\
\hfill v = 0 \hfill & \mbox{on $\partial B_{\Omega}$,} 
\end{cases}
\end{equation}
where $B_{\Omega}$ is the ball centered at the origin sucht that $|B_{\Omega}|=|\Omega|$ and $C$ and $\gamma$ are the constant of Lemma \eqref{regup}. Thus inequality \eqref{10-03p} provides the already mentioned comparison between the rearrangements of the solution of the original problem and the symmetrized one.\\
\end{rem}

\begin{proof} We apply to our contest the approach of \cite{DPbis}. Since $\un\in W^{1,p}_0(\Omega)$, we are allowed to take $\frac{T_h(\Gk(\un))}{h}$ with $h>0$ and $k\ge0$ as test function in \eqref{apprp}, so that we get
\begin{equation}\label{deltahp}
\frac{\alpha}{h}\int_{\{k<|\un|<k+h\}}|\nabla\un|^p\le \int_{\{|u_n|>k\}}|f|+\frac{(k+h)^{p-1}}{h}\int_{\{k<|\un|<k+h\}}|E||\nabla \un|.
\end{equation}
Applying H\"older inequality to the last integral in the right hand side above and letting $h$ go to zero, we obtain
\begin{equation}\label{art1}
-\frac{d}{dk}\int_{\{|\un|>k\}}|\nabla \un|^p\le \frac{\int_{\{|\un|>k\}}|f|}{\alpha}+\frac{k^{p-1}}{\alpha}\left(-\frac{d}{dk}\int_{\{|\un|>k\}}|\nabla \un|^p\right)^{\frac{1}{p}}\left(-\frac{d}{dk}\int_{\{|\un|>k\}}|E|^{p'}\right)^{\frac{1}{p'}}.
\end{equation}
Let us set for any $n\in\mathbb{N}$ and $k>0$
\[
A_n(k)=|\{|\un|>k\}|,
\]
namely $A_n(k)$ is the distribution function of $\un$. Let us moreover introduce the pseudo rearrangements of $|\mathcal{F}|^2$ and $|\mathcal{E}|^2$ with respect to $\un$ (see \eqref{night2} for the definition)
\[
D_{1,n}(s):=\frac{d}{ds}\int_{\Omega_n(s)}|\mathcal{F}(x)|^pdx \ \ \ \mbox{and} \ \ \ D_{2,n}(s):=\frac{d}{ds}\int_{\Omega_n(s)}|\mathcal{E}(x)|^pdx, \ \ \ \mbox{with} \ \ \ s\in(0,|\Omega|).
\]
Thanks to \ref{24-5} we have that for $k>0$
\[
D_{1,n}(A_n(k))(-A_n'(k))=-\frac{d}{dk}\int_{\{|\un|>k\}}|\mathcal{F}|^{p'} \  \ \mbox{and} \ \  D_{2,n}(A_n(k))(-A_n'(k))=-\frac{d}{dk}\int_{\{|\un|>k\}}|\mathcal{E}|^{p'}.
\]
Setting $\mathcal{D}_n(s)=D_{1,n}(s)+D_{2,n}(s)$, to have a more compact notation and using \eqref{talenti}, inequality \eqref{art1} becomes
\begin{multline} \label{12-02p}
\left(-\frac{d}{dk}\int_{\{|\un|>k\}}|\nabla \un|^p\right)^{\frac{1}{p'}}\\\le \frac{A_n(k)^{\left(\frac{1}{N}-1\right)}}{\alpha\sigma_N} \int_{\{|\un|>k\}} |f|\big(-A'_n(k)\big)^{\frac{1}{p'}}+\frac{k^{p-1}}{\alpha}\mathcal{D}_n(A_n(k))^{\frac{1}{p'}}\big(-A'_n(k)\big)^{\frac{1}{p'}},
\end{multline}
that can be rewritten, using once more \eqref{talenti}, as
\[
1 \le\left[ \frac{A_n(k)^{p\left(\frac{1}{N}-1\right)}}{\alpha\sigma_N^p} \int_{\{|\un|>k\}} |f|+\frac{k^{p-1}}{\alpha\sigma_N^{p-1}}\mathcal{D}_n(A_n(k))^{\frac{1}{p'}}A_n(k)^{\left(\frac{1}{N}-1\right)(p-1)}\right](-A'_n(k))^{p-1}.
\]
Thanks to the definition of decreasing rearrangement and using Proposition \ref{09-10} in Section \ref{unoo}, it results
\begin{equation}\label{SB}
-\frac{d}{ds}\overline{u}_n(s)\le\left[\frac{s^{p(\frac{1}{N}-1)}}{\alpha\sigma_N^p}\int_0^{s}\bar{f}+\frac{1}{\alpha\sigma_N^{p-1}}\mathcal{D}_n(s)^{\frac{1}{p'}}s^{\left(\frac1N-1\right)(p-1)}\overline{u}_n^{p-1}(s)\right]^{\frac{1}{p-1}}
\end{equation}
\[ \le C_{\delta} s^{p'(\frac{1}{N}-1)}\left(\int_0^{s}\bar{f}(\tau)d\tau\right)^{\frac{1}{p-1}}+\frac{\delta}{\alpha^{\frac{1}{p-1}}\sigma_N}\mathcal{D}_n(s)^{\frac1p}s^{\frac{1}{N}-1}\overline{u}_n(s),
\]
where $\delta>1$ is such that
\[
\gamma=\frac{\delta B}{\alpha^{\frac{1}{p-1}}\sigma_N}<\frac{N-pm}{(p-1)Nm}.
\]
This is possible thanks to assumption \eqref{marp}. Defining the auxiliary function
\[
R_n(s)=e^{\frac{\gamma}{B}\int_t^s\mathcal{D}_n(\tau)^{\frac1p}\tau^{\frac{1}{N}-1}d\tau}, \ \ \ \mbox{with} \ \ \ t<s, 
\]
we finally deduce that
\[
-\frac{d}{ds}\big(R(s)\overline{u}_n(s)\big)\le C s^{p'(\frac{1}{N}-1)}R_n(s)\left(\int_0^{s}\bar{f}(\tau)d\tau\right)^{\frac{1}{p-1}}.
\]

In order to estimate $R_n(s)$ we recall the definition of $\mathcal{D}_n$ and Lemma \ref{inl}. It results that
\[
\int_t^sD_{1,n}(\tau)^{\frac1p}\tau^{\frac1N-1}d\tau\le \|\mathcal{F}\|_{\elle{\infty}}\frac{N}{\alpha^{\frac{1}{p-1}}\sigma_N}|\Omega|^{\frac1N}
\]
and, using Young Inequality, integration by parts and assumption \eqref{marp}, that
\begin{equation*}
\int_t^sD_{2,n}(\tau)^{\frac1p}\tau^{\frac1N-1}d\tau\le \frac{1}{pB^{p-1}}\int_t^sD_{2,n}(\tau)\tau^{\frac pN-1}d\tau+\frac{B}{p'}\int_t^s\frac{1}{\tau}d\tau\end{equation*}
\[
\le \frac{1}{pB^{p-1}}\left[s^{\frac{p}{N}-1}\int_0^s\overline{\mathcal{E}}^{p'}-t^{\frac{p}{N}-1}\int_0^t\overline{\mathcal{E}}^{p'}-\frac{p-N}{N}\int_t^s\tau^{\frac{p}{N}-2}\int_0^{\tau}\overline{\mathcal{E}}^{p'}d\tau\right]+\frac{B}{p'}\log\left(\frac{s}{t}\right)
\]
\[
\le  \frac{NB}{p(N-p)}+B\log\left(\frac{s}{t}\right).
\]
Thus we have that
\[
R_n(s)=e^{\frac{\gamma}{B}\int_t^s\mathcal{D}_n(\tau)^{\frac1p}\tau^{\frac{1}{N}-1}d\tau}\le C\left(\frac{s}{t}\right)^{\gamma}.
\]
Integrating between $t$ and $|\Omega|$ and recalling that by definition of both $\overline{u_n}(|\Omega|)=0$ and $R(t)=1$, we get
\[
\overline{u}_n(t)=-R(|\Omega|)\overline{u}_n(|\Omega|)+R(t)\overline{u}_n(t)\le \frac{C_1}{t^{\gamma}}\int_t^{|\Omega|}s^{p'(\frac{1}{N}-1)+\gamma}\left(\int_0^{s}\bar{f}(\tau)d\tau\right)^{\frac{1}{p-1}}ds.
\]
Thus the proof of the Lemma is concluded.
\end{proof}
The next Lemma is the core of our main result and  provides the estimate relative to the decreasing rearrangement of $\nabla\un$. 
\begin{lemma}\label{regdup} Let us assume \eqref{ll5} and \eqref{marp}.
Let $\overline{|\nabla \un|}$ be the decreasing rearrangement of $|\nabla\un|$. There exists $C=C(N,\alpha,p,E,m)$ such that 
\begin{multline}\label{gradexp}
\frac{1}{s}\int_0^{s}\overline{|\nabla \un|}^{p-1}\le C\left[\frac{1}{s}\int_0^s\big(v(t)^{p-1}\mathcal{D}_n(t)^{\frac{1}{p'}}+\tilde{f}(t)t^{\frac{1}{N}}\big)dt\right. \\ \left.+\left(\frac{1}{s}\int_s^{|\Omega|}\big(v(t)^{p}\mathcal{D}_n(t)+\tilde{f}(t)^{p'}t^{\frac{p'}{N}}\big)dt\right)^{\frac{1}{p'}}\right],
\end{multline}
where $v(t)$ is defined in \eqref{10-03p}.
\end{lemma}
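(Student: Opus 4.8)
The plan is to bound the left-hand side of \eqref{gradexp} by slicing the super-level set of $|\nabla\un|$ of measure $s$ along the super-level sets of $\un$, and then to feed into the resulting pieces the pointwise bound $\overline{u}_n\le v$ from Lemma \ref{regup} together with the Talenti-type inequality \eqref{12-02p} already obtained in its proof. Concretely, I would fix $s\in(0,|\Omega|)$ for which $A_n(\overline{u}_n(s))=s$ (true for a.e.\ $s$, and harmless since both sides of \eqref{gradexp} are continuous in $s$), set $k=\overline{u}_n(s)$, and apply Lemma \ref{16-8} and Remark \ref{okkio}: the set $\widetilde{\Omega}_n(s)$ has measure $s$ and, since $\overline{|\nabla\un|}^{p-1}=\overline{|\nabla\un|^{p-1}}$, it satisfies $\int_0^s\overline{|\nabla\un|}^{p-1}=\int_{\widetilde{\Omega}_n(s)}|\nabla\un|^{p-1}$. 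Splitting this integral where $|\un|$ crosses the level $k$,
\[
\int_0^s\overline{|\nabla\un|}^{p-1}=\underbrace{\int_{\widetilde{\Omega}_n(s)\cap\{|\un|\le k\}}|\nabla\un|^{p-1}}_{(I)}+\underbrace{\int_{\widetilde{\Omega}_n(s)\cap\{|\un|>k\}}|\nabla\un|^{p-1}}_{(II)}.
\]

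For $(I)$, H\"older's inequality and $|\widetilde{\Omega}_n(s)|=s$ give $(I)\le\big(\io|\nabla T_k(\un)|^p\big)^{1/p'}s^{1/p}$. To control $\io|\nabla T_k(\un)|^p=\int_0^k\big(-\frac{d}{d\kappa}\int_{\{|\un|>\kappa\}}|\nabla\un|^p\big)d\kappa$ I would raise \eqref{12-02p} to the power $p'$, use the Hardy--Littlewood bound $\int_{\{|\un|>\kappa\}}|f|\le A_n(\kappa)\tilde{f}(A_n(\kappa))$, integrate over $\kappa\in(0,k)$, change variable $\sigma=A_n(\kappa)$ (so $\kappa=\overline{u}_n(\sigma)$ and $(-A_n'(\kappa))\,d\kappa=-d\sigma$), and finally replace $\overline{u}_n$ by $v$ via Lemma \ref{regup}. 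This produces $\io|\nabla T_k(\un)|^p\le C\int_s^{|\Omega|}\big(v(t)^p\mathcal{D}_n(t)+\tilde{f}(t)^{p'}t^{p'/N}\big)dt$, and since $s^{1/p-1}=(1/s)^{1/p'}$ the contribution of $(I)$ to $\frac1s\int_0^s\overline{|\nabla\un|}^{p-1}$ is, up to the constant $C$, the second term of \eqref{gradexp}.

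For $(II)$ I would discard $\widetilde{\Omega}_n(s)$ and write, by the coarea formula, $\int_{\{|\un|>k\}}|\nabla\un|^{p-1}=\int_k^{+\infty}\big(\int_{\{|\un|=\kappa\}}|\nabla\un|^{p-2}\,d\mathcal{H}^{N-1}\big)d\kappa$. The decomposition $|\nabla\un|^{p-2}=\big(|\nabla\un|^{p-1}\big)^{1/p'}\big(|\nabla\un|^{-1}\big)^{1/p}$ combined with H\"older on each level set (which works for every $p>1$) yields
\[
\int_{\{|\un|=\kappa\}}|\nabla\un|^{p-2}\,d\mathcal{H}^{N-1}\le\Big(-\frac{d}{d\kappa}\int_{\{|\un|>\kappa\}}|\nabla\un|^p\Big)^{1/p'}\big(-A_n'(\kappa)\big)^{1/p},
\]
having used $\int_{\{|\un|=\kappa\}}|\nabla\un|^{-1}\,d\mathcal{H}^{N-1}=-A_n'(\kappa)$. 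Plugging \eqref{12-02p} into the first factor the powers of $-A_n'$ add up to $-A_n'(\kappa)$, so after integrating over $\kappa\in(k,+\infty)$, changing variable $\sigma=A_n(\kappa)$, and using once more the Hardy--Littlewood bound and $\overline{u}_n\le v$, I get $(II)\le C\int_0^s\big(v(t)^{p-1}\mathcal{D}_n(t)^{1/p'}+\tilde{f}(t)t^{1/N}\big)dt$, which is $s$ times the first term of \eqref{gradexp}. Adding the estimates of $(I)$ and $(II)$ and dividing by $s$ gives \eqref{gradexp}.

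The hardest part will be the rigorous treatment of $(II)$: justifying the coarea identity $-\frac{d}{d\kappa}\int_{\{|\un|>\kappa\}}|\nabla\un|^{p-1}=\int_{\{|\un|=\kappa\}}|\nabla\un|^{p-2}\,d\mathcal{H}^{N-1}$ for a.e.\ $\kappa$ (in particular finiteness of the level-set integral when $1<p<2$, which follows from the same H\"older decomposition because $-\frac{d}{d\kappa}\int_{\{|\un|>\kappa\}}|\nabla\un|^p$ and $-A_n'(\kappa)$ are finite for a.e.\ $\kappa$), and the repeated passages between the variable $\kappa$ and the rearrangement variable $\sigma=A_n(\kappa)$, which rely on Proposition \ref{09-10}. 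The rest is bookkeeping with the constant $C=C(N,\alpha,p,E,m)$ and checking that the exponents match the statement.
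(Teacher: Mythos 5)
Your proposal is in substance the paper's own proof: the same splitting of $\int_{\widetilde{\Omega}_n(s)}|\nabla u_n|^{p-1}$ at the level $k=\overline{u}_n(s)$, H\"older with the factor $s^{1/p}$ for the part where $|u_n|\le k$, and the same ingredients (inequality \eqref{12-02p}, the Hardy--Littlewood bound, Lemma \ref{inl} and the pointwise bound $\overline{u}_n\le v$ from Lemma \ref{regup}) to produce the two terms of \eqref{gradexp}; the only real difference is that for the part $\{|u_n|>k\}$ you work in the level variable via the coarea formula and a level-set H\"older with the split $|\nabla u_n|^{p-2}=(|\nabla u_n|^{p-1})^{1/p'}(|\nabla u_n|^{-1})^{1/p}$, where the paper runs the equivalent difference-quotient argument in the rearrangement variable together with Proposition \ref{09-10}, and likewise you integrate \eqref{12-02p}$^{p'}$ in $\kappa$ and change variables where the paper uses the chain-rule identity \eqref{composition}. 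One small correction to your reduction: the identity $A_n(\overline{u}_n(s))=s$ is \emph{not} true for a.e.\ $s$ when $|u_n|$ has level sets of positive measure (it fails on whole plateau intervals, so the continuity argument cannot recover it there); but it is also not needed. In the change of variables for the high part you only use $A_n(\overline{u}_n(s))\le s$, which always holds, and in the low part the variable $\kappa$ runs over $[0,\overline{u}_n(s))$, so its image under $A_n$ lies in $[\,|\{|u_n|\ge\overline{u}_n(s)\}|,|\Omega|\,]\subset[s,|\Omega|]$ by Lemma \ref{16-8}; with these one-sided inequalities in place of the equality, your argument yields \eqref{gradexp} for every $s$, exactly as in the paper.
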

\begin{proof}
Taking advantage of Lemma \ref{16-8} (see Remark \ref{okkio}), it follows that
\[
\int_0^s\overline{|\nabla \un|}^{p-1}d\tau =\int_{\widetilde{\Omega}_s}|\nabla \un|^{p-1}dx\]
\[= \int_{\widetilde{\Omega}_s\cap\{|\un|>\overline{u}_n(s)\}}|\nabla\un|^{p-1}dx+\int_{\widetilde{\Omega}_s\cap\{|\un|\le\overline{u}_n(s)\}}|\nabla \un|^{p-1}dx\]
\[
\le\int_{\{|\un|>\overline{u}_n(s)\}}|\nabla \un|^{p-1}dx+
 \left(\int_{\{|\un|\le\overline{u}_n(s)\}}|\nabla \un|^{p}dx\right)^{\frac{1}{p'}}|\widetilde{\Omega}_s|^{\frac{1}{p}}\le I_1(s)+I_2^{\frac{1}{p'}}(s)s^{\frac{1}{p}}.
\]
As far as $I_2$ is concerned we infer from \eqref{12-02p} that
\begin{equation}\label{composition}
\frac{d}{ds}\int_{\{|\un|>\overline{u}_n(s)\}}|\nabla \un|^p=\left.\frac{d}{dk}\int_{\{|\un|>k\}}|\nabla \un|^p\right|_{k=\overline{u}_n(s)}\frac{d}{ds}\overline{u}_n(s)\end{equation}\[\le C\left[ \overline{u}_n(s)^p\mathcal{D}_n(s)+s^{\frac{p'}{N}}\tilde{f}(s)^{p'}\right].
\]
Integrating between $s$ and $|\Omega|$, we get
\begin{multline*}
I_2=\int_{\{|\un|\le \overline{u}_n(s)\}}|\nabla \un|^p=-\int_{\{|\un|>\overline{u}_n(s)\}}|\nabla \un|^p+\io|\nabla \un|^p\\
\le C\left[\int_{s}^{|\Omega|}\overline{u}_n(t)^p \mathcal{D}_n(t)+ t^{\frac{p'}{N}}\tilde{f}(t)^{p'}dt\right].
\end{multline*}

In order to estimate $I_1$ notice that 
\[
\int_{\{\overline{u}_n(s)\le|\un|<\overline{u}_n(s+h)\}}|\nabla\un|^{p-1}\]\[\le \left(\int_{\{\overline{u}_n(s)\le|\un|<\overline{u}_n(s+h)\}}|\nabla\un|^{p}\right)^{\frac1{p'}}|\{\overline{u}_n(s)\le|\un|<\overline{u}_n(s+h)\}|^{\frac1p}.
\]
Passing to the limit as $h\to 0$ and recalling that $|\{|\un|>\overline{u}_n(s)\}|'\le1$ thanks to Lemma \ref{09-10}, we obtain that
\[
\frac{d}{ds}\int_{\{|\un|>\overline{u}_n(s)\}}|\nabla \un|^{p-1}\le\left(\frac{d}{ds}\int_{\{|\un|>\overline{u}_n(s)\}}|\nabla \un|^{p}\right)^{\frac1{p'}}\le C\left(\overline{u}_n(s)^{p-1}\mathcal{D}_n^{\frac{1}{p'}}(s)+\tilde{f}(s)s^{\frac{1}{N}}\right).
\]

Hence we have the following estimate for $I_1$
\[
I_1\le C\int_0^s\left(\overline{u}_n(t)^{p-1}\mathcal{D}_n^{\frac{1}{p'}}(t)+\tilde{f}(t)t^{\frac{1}{N}}\right)dt.
\]
Putting together the obtained information for $I_1$ and $I_2$ we recover \eqref{gradexp}.
\end{proof}
The previous estimates on the decreasing rearrangements of $\un$ and $\nabla\un$ allow us to obtain the following Lorentz estimates in function of the Lorentz summability of the datum $f$.
\begin{lemma}\label{dupap} Let $\{\un\}$ be the sequence of solutions of \eqref{apprp}.\\
(i) If $f\in\lor{m}{q}$ with $1< m< (p^*)'$ and $0<q\le\infi$, then 
\[
\|\un\|_{\lor{\frac{(p-1)Nm}{N-pm}}{(p-1)q}}\le C\|f\|_{\lor mq} \ \ \ \mbox{and} \ \ \ \|\nabla \un\|_{\lor{\frac{(p-1)Nm}{N-m}}{(p-1)q}}\le C\|f\|_{\lor mq}.
\]
(ii) If $f\in\lorr{1}{q}$ with $0<q\le\infty$
\[
\|\un\|_{\lor{\frac{(p-1)N}{N-p}}{(p-1)q}}\le C\|f\|_{\lorr{1}{q}}\ \ \ \mbox{and}  \ \ \ \|\nabla\un\|_{\lor{\frac{(p-1)N}{N-1}}{(p-1)q}}\le C\|f\|_{f\in\lorr{1}{q}}.
\]
(iii) If $f\in\elle1$, then
\[
\|\un\|_{\lor{\frac{(p-1)N}{N-p}}{\infi}}\le C\|f\|_{\elle1}\ \ \ \mbox{and}  \ \ \ \|\nabla\un\|_{\lor{\frac{(p-1)N}{N-1}}{\infi}}\le C\|f\|_{\elle1}.
\]
\end{lemma}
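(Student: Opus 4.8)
The plan is to convert the pointwise estimates from Lemma~\ref{regup} and Lemma~\ref{regdup} into Lorentz norm bounds by carefully tracking the exponents and applying the Hardy-type inequality of Lemma~\ref{trickint}. First I would record from Lemma~\ref{regup} that $\overline{u}_n(t)\le \overline{v}(t)$ with
\[
\overline{v}(t)=\frac{C}{t^{\gamma}}\int_t^{|\Omega|}s^{\frac{p'}{N}+\gamma-1}\tilde{f}(s)^{\frac{1}{p-1}}\,ds,
\]
and that $\gamma<\frac{N-pm}{(p-1)Nm}$ may be chosen as close to that threshold as desired. For part $(i)$, applying Lemma~\ref{trickint} with the choice $\delta>1$ (since $\gamma>0$), $\lambda=(p-1)q$, $\beta=\frac{p'}{N}+\gamma-1-\delta$, and the decreasing function $r(s)=\tilde{f}(s)^{1/(p-1)}$, one obtains
\[
\|\overline{v}\|_{\lor{\frac{(p-1)Nm}{N-pm}}{(p-1)q}}\le C\,\Big\|\tilde{f}^{1/(p-1)}\Big\|_{\lor{(p-1)m}{(p-1)q}}=C\,\lceil f\rceil_{L^{(m,q)}(\Omega)}^{1/(p-1)\cdot(p-1)}\le C\|f\|_{\lor mq},
\]
where in the last step I would use the equivalence \eqref{equivalent} (valid since $m>1$) together with the elementary identity $\overline{g^{1/(p-1)}}=\overline{g}^{\,1/(p-1)}$ relating rearrangements and powers. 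The only subtlety is bookkeeping: one must check that the exponent condition of Lemma~\ref{trickint}, namely $\beta\ge0$, holds precisely when $m<(p^*)'$ — this is where the upper bound on $m$ in the hypothesis enters, and it forces $\delta\ne1$ in the Lemma (indeed $\delta>1$), which is exactly why the integral in $\overline{v}$ runs from $t$ to $|\Omega|$.

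For the gradient estimate in $(i)$, I would feed the bound $\overline{u}_n(t)\le v(t)$ into the right-hand side of \eqref{gradexp} and estimate the two resulting terms separately. Since $\mathcal{D}_n$ is controlled by Lemma~\ref{inl} ($D_{1,n}$ bounded pointwise by $\|\mathcal F\|_{\elle\infty}^{p}$ and $\overline{D}_{2,n}(s)\le C s^{-p/N}$ from the Marcinkiewicz bound in \eqref{marp}), the products $v(t)^{p-1}\mathcal D_n(t)^{1/p'}$ and $v(t)^p\mathcal D_n(t)$ are dominated by powers of $t$ times integrals of $\tilde f^{1/(p-1)}$, and likewise the $\tilde f\,t^{1/N}$ and $\tilde f^{p'}t^{p'/N}$ terms are explicit. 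Both a forward average $\frac1s\int_0^s(\cdots)$ and a tail term $\big(\frac1s\int_s^{|\Omega|}(\cdots)\big)^{1/p'}$ appear, so Lemma~\ref{trickint} is applied twice, once with $\delta<1$ (forward average) and once with $\delta>1$ (tail), and the resulting bounds are combined; the target exponent $\frac{(p-1)Nm}{N-m}$ emerges from matching powers. Then $\frac1s\int_0^s\overline{|\nabla u_n|}^{p-1}$, and hence $\overline{|\nabla u_n|}^{p-1}$ since $\overline{|\nabla u_n|}\le \frac1s\int_0^s\overline{|\nabla u_n|}$ does \emph{not} hold — rather I would use that the maximal function $\tilde g$ dominates $\overline g$ and that $\lor{a}{b}$ and $L^{(a,b)}$ are equivalent for $a>1$ — lies in the required Lorentz space, and taking $(p-1)$-th roots gives the claim for $\nabla u_n$.

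Parts $(ii)$ and $(iii)$ are the borderline cases $m=1$, where $f\in\lorr1q$ or $f\in\elle1$. Here the equivalence \eqref{equivalent} fails, so I would instead work directly with the maximal function $\tilde f$ and the definition of $\|\cdot\|_{\lorr1q}$, respectively with the trivial bound $\tilde f(s)\le \frac1s\|f\|_{\elle1}$ for $(iii)$. The structure of the argument is identical — plug $\overline{u}_n\le v$ into \eqref{gradexp}, apply Lemma~\ref{trickint} — but the exponent $\gamma$ must now be chosen using the $m=1$ version of \eqref{marp} as stated in the paragraph before Theorem~\ref{teolowp}, and the endpoint integrability in $s$ is more delicate: for $(iii)$ one only gets the weak-type ($q=\infty$) conclusion because $\tilde f\approx 1/s$ is borderline non-integrable against the relevant weight. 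I expect the main obstacle to be precisely this exponent bookkeeping at the $m=1$ endpoint: ensuring that the constant $\gamma$ can be taken small enough that all the powers of $s$ appearing after the double application of Lemma~\ref{trickint} are integrable, and that the logarithmic factors generated by $\int \frac{d\tau}{\tau}$ (which already appeared in the proof of Lemma~\ref{regup}) are absorbed — this is where the sharpness of assumption \eqref{marp} is genuinely used, rather than merely convenient.
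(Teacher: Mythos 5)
Your strategy coincides with the paper's: insert the pointwise bounds \eqref{10-03p} and \eqref{gradexp} into the Lorentz quasinorms, apply the Hardy-type Lemma \ref{trickint} (with the tail form $\delta>1$ and the forward-average form $\delta<1$), control $\mathcal D_n$ through Lemma \ref{inl} and \eqref{marp}, pass from $\frac1s\int_0^s\overline{|\nabla \un|}^{p-1}$ to the Lorentz norm of $|\nabla\un|^{p-1}$ via the maximal function and the equivalence \eqref{equivalent} (legitimate since the first exponents $\frac{Nm}{N-pm}$ and $m^*$ exceed $1$), and in case \emph{(iii)} use $\tilde f(s)\le\|f\|_{\elle1}/s$; cases \emph{(ii)} and \emph{(iii)} are then handled exactly as in the paper (the former by repeating \emph{(i)} with the $\lorr1q$ quasinorm, the latter by a direct computation which works because $p'\big(\frac1N-1\big)+\gamma+1<0$, i.e.\ the $m=1$ form of \eqref{marp}).

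However, two of the justifications you give are wrong, and one of them designates a check that would fail if you actually performed it. For the estimate of $\un$ the correct parameters in Lemma \ref{trickint} are $r=\tilde f^{1/(p-1)}$, $\beta=\frac{p'}{N}+\gamma-1$ (not $\frac{p'}{N}+\gamma-1-\delta$), $\lambda=(p-1)q$ and $\delta=1+\frac{N-pm}{(p-1)Nm}-\gamma$, so that $\delta>1$ holds \emph{because} $\gamma<\frac{N-pm}{(p-1)Nm}$ from Lemma \ref{regup}, not ``since $\gamma>0$'' (enlarging $\gamma$ makes $\delta$ smaller). More importantly, the claim that ``$\beta\ge0$ holds precisely when $m<(p^*)'$'' is false: with these parameters $\beta$ is in general negative (for $p=2$ and $m>1$ one has $\gamma<\frac{N-2m}{Nm}<\frac{N-2}{N}$, hence $\beta=\frac2N+\gamma-1<0$), so the verification you announce cannot succeed; moreover the restriction $m<(p^*)'$ plays no role in the bound for $\un$, which only needs $m<\frac Np$ (indeed the same Lorentz bound for $\un$ appears for $(p^*)'<m<\frac Np$, cf.\ \eqref{stop} and Theorem \ref{teohighp}\emph{(ii)}). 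Where $m<(p^*)'$ genuinely enters is the gradient estimate: the tail terms $\big(\frac1s\int_s^{|\Omega|}\cdots\big)^{1/p'}$ of \eqref{10:01} force $\delta=\frac{p'}{m^*}$, and $\delta>1$ is equivalent to $m^*<p'$, i.e.\ $m<(p^*)'$. A further small slip: $\overline g(s)\le\frac1s\int_0^s\overline g$ \emph{does} hold, since $\overline g$ is non increasing; it is the same fact as ``$\tilde g$ dominates $\overline g$'' that you then invoke. Finally, the exponent bookkeeping you defer (e.g.\ that $\lambda(\beta+\delta)=\frac qm$ in the $\un$ estimate, and that all four terms of \eqref{10:01} land in $\lor{m^*}{q}$ after Lemma \ref{trickint} with $\mathcal D_n(t)\le C t^{-p/N}$) is precisely the content of the paper's proof and still has to be written out.
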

\begin{proof} 
\textbf{\emph{(i)}.} Le us start with the $f\in \lor mq$ with $1\le m<\left(p^*\right)'$ and $0<q<\infi$.
\emph{Estimate for $\{\un\}$.} 
Using \eqref{10-03p} we get
\begin{equation}\label{stop}
\||u|^{p-1}\|^q_{L^{\frac{Nm}{N-pm},q}(\Omega)}=\int_0^{+\infi}t^{\frac{q(N-pm)}{Nm}}\overline{u}(t)^{(p-1)q}\frac{dt}{t}\end{equation}
\[\le C\int_0^{+\infi}t^{\frac{q(N-pm)}{Nm}-\gamma (p-1)q}\left(\int_t^{|\Omega|}s^{\frac{p'}{N}+\gamma-1}\tilde{f}(s)^{\frac{1}{p-1}}ds\right)^{(p-1)q}\frac{dt}{t}\]
\[= C \int_0^{\infi}t^{\frac{q(N-pm)}{Nm}-\gamma (p-1)q+(p-1)q}\left(\frac{\int_t^{|\Omega|}s^{\frac{p'}{N}+\gamma-1}\tilde{f}^{\frac{1}{p-1}}}{t}\right)^{(p-1)q}\frac{dt}{t} \le  C\int_0^{\infi}t^{\frac{q}{m}}\overline{f}^q\frac{dt}{t} ,
\]
where the last inequality comes from Lemma \ref{trickint} with $\delta=\frac{N-pm}{Nm(p-1)}-\gamma+1>1$, thanks to the choice of $\gamma$.\\ In the case $q=+\infi$, we obtain directly from \eqref{10-03p} that
\[
\overline{u}(s)\le \frac{C}{s^{\frac{N-pm}{Nm(p-1)}}}\|f\|_{L^{m,\infi}(\Omega)}.
\]
\emph{Estimate for $\{\nabla \un\}$.} Thank to Lemma \ref{inl} estimate \eqref{gradexp} can be rewritten as
\begin{equation}\label{10:01}
\frac{1}{s}\int_0^{s}\overline{|\nabla \un|}^{p-1}\le C\left[\frac{1}{s}\int_0^s\big(v(t)^{p-1}t^{-\frac{p-1}{N}}+\tilde{f}t^{\frac{1}{N}}\big)dt+\left(\frac{1}{s}\int_s^{|\Omega|}\big(v(t)^{p}t^{-\frac pN}+\tilde{f}^{p'}t^{\frac{p'}{N}}\big)dt\right)^{\frac{1}{p'}}\right].
\end{equation}

In order to prove the membership of the four terms above to $L^{m^*,q}(\Omega)$ we use Lemma \ref{trickint} 
\[
\int_0^{\infi}s^{\frac{q}{m^*}}\left(\frac{1}{s}\int_0^s v(t)^{p-1}t^{-\frac{p-1}{N}}dt\right)^q\frac{ds}{s}\le C \int_0^{\infi}s^{\frac{q(N-pm)}{Nm}} v(s)^{(p-1)q}\frac{ds}{s}<\infi,
\]
\[
\int_0^{\infi}s^{\frac{q}{m^*}}\left(\frac{1}{s}\int_0^s\tilde{f}(t)t^{\frac{1}{N}}dt\right)^q\frac{ds}{s}\le C \int_0^{\infi}s^{\frac{q}{m}}\tilde{f}(s)^q\frac{ds}{s}<\infi,
\]
where we take  $\delta=\frac{1}{m^*}<1$, and
\[\int_0^{+\infi}s^{\frac{q}{m^*}}\left(\frac{1}{s}\int_{s}^{|\Omega|} \tilde{f}(t)^{p'}t^{\frac{p'}{N}}dt\right)^{\frac{q}{p'}}\frac{ds}{s}\le \int_0^{+\infi}s^{\frac{q}{m}}\tilde{f}(s)^{q}\frac{ds}{s}<\infi,
\]
\[+\int_0^{+\infi}s^{\frac{q}{m^*}}\left(\frac{1}{s}\int_{s}^{|\Omega|}\frac{ v(t)^p}{t^{\frac{p}{N}}}dt\right)^{\frac{q}{p'}}\frac{ds}{s}\le
\int_0^{+\infi}s^{\frac{q(N-pm)}{Nm}} v(s)^{(p-1)q}\frac{ds}{s}<\infi.
\]
where we take $\delta=\frac{p'}{m^*}<1$ (recall that $m<\left(p^*\right)'$). Hence we have that
\[
\|\nabla\un\|_{\lor{\frac{(p-1)Nm}{N-m}}{(p-1)q}}^q\le\int_0^{\infty}\tau^{\frac{q}{m^*}}\left(\frac{1}{s}\int_0^\tau\overline{|\nabla\wn|}^{p-1}(t)dt\right)^q\frac{d\tau}{\tau}\le C\|f\|_{\lor{m}{q}}.
\]
In the case $q=\infi$ we obtain by direct calculation from \eqref{10:01} that
\[
\|\nabla u\|_{L^{\frac{(p-1)Nm}{N-m},\infi}(\Omega)}\le C\|f\|_{L^{m,\infi}(\Omega)}.
\]
\textbf{\emph{(ii)}.} It follows exactly the same argument of \emph{(i)}.\\
\textbf{\emph{(iii)}.} Inequality \eqref{10-03p} becomes 
\[
\overline{u}_n(t)\le v(t)\le C \|f\|_{\elle1}\frac{1}{t^{\gamma}}\int_t^{|\Omega|}s^{p'\left(\frac{1}{N}-1\right)+\gamma}ds \le C\|f\|_{\elle1} t^{-\frac{N-p}{(p-1)N}},
\]
where we have used that $p'\left(\frac{1}{N}-1\right)+\gamma+1<0$. On the other hand we have that
\[
\frac{1}{t}\int_0^t\overline{|\nabla\un|}^{p-1}\le C \|f\|_{\elle1}\left[\frac{1}{t}\int_0^t s^{\frac{1}{N}-1}ds+\right(\frac{1}{t}\int_0^t s^{p'\left(\frac{1}{N}-1\right)}ds\left)^{\frac{1}{p'}}\right]\le C \|f\|_{\elle1} t^{-\frac{N-1}{N}},
\]
and thus the proof is concluded.
\end{proof}
\begin{rem}\label{nirvana}
In order to show that assumption \eqref{marp} is sharp for Theorems \ref{teohighp} and \ref{teolowp} to hold, let us consider the solution of the symmetrized problem \eqref{simmetrizzato} in the simple case $p=2$ and $f\in\emme m$ with $1<m<(2^*)'$. For this value of $p$ notice that $\gamma=\gamma(B)=\frac{B}{\alpha N\omega_N^{\frac1N}}$ (see \eqref{SB}). If we take now $B>0$ so that $\frac{N-2m}{Nm}<\gamma(B)<1$ (\eqref{marp} is not satisfied) it follows that
\[
\overline{v}(t)\le \frac{C\|f\|_{\emme m}}{t^{\gamma}}\int_t^{|\Omega|}s^{\frac{2}{N}+\gamma-1-\frac1m}ds\le \frac{\tilde{C}}{t^{\gamma}}.
\]
Thus $u\in \emme{\frac{1}{\gamma(B)}}$. Moreover, if $\frac{N-2m}{Nm}<\gamma(B)<\frac{N-1}{N}$, we deduce by direct computation of the gradient that
\[
\overline{|\nabla v|}(t)\le \frac{C}{t^{\gamma(B)+\frac1N}}.
\]
Hence for any $1<r<\frac{Nm}{N-m}$ we can chose $B$ in order to have $|\nabla v|\in\emme{r}$. In the borderline case $\gamma(B)=\frac{N-2m}{Nm}$ estimates with logarithmic corrections are obtained.\\
This argument shows that if \eqref{marp} is not satisfied the standard relation between the regularity of the data and the solution is lost (for a more detailed description of this fact see \cite{BO}).
\end{rem}

Now we are in the position of proving Theorems \ref{teohighp} and \ref{teolowp}. We start from the latter.

\begin{proof}[Proof of Theorem \ref{teolowp}.]
\textbf{Case \emph{(i)}.} Let us start with the case $p>2-\frac{1}{N}$ and $f\in\elle1$. From Lemma \ref{dupap} we deduce that the sequence $\{|\nabla\un|\}$ is bounded in $\lor{\frac{(p-1)N}{N-1}}{\infty}$ and, in turn, in $\elle r$ for any $1<r<\frac{(p-1)N}{N-1}$. Hence there exists $u\in W^{1,r}_0(\Omega)$ such that  $\un\rightharpoonup u$ in $W^{1,r}_0(\Omega)$. Thanks to the almost everywhere convergence of the gradients proved in Lemma \ref{22-3},  we infer that 
\[
\nabla\un\to \nabla u \ \ \ \mbox{in} \ \ \ L^{\frac{r}{p-1}}(\Omega).
\] 
Observing that it is possible to choose $r$ such that $\frac{r}{p-1}>1$, it follows that
\begin{equation}\label{9/2/2019}
|\nabla \un|^{p-2}\nabla \un  \to |\nabla u|^{p-2}\nabla u  \ \ \ \mbox{in} \ \ \ \elle1.
\end{equation}
Thus we can pass to the limit, as $n\to\infty$, in the left hand side of \eqref{apprp} for every $\phi\in C^1_0(\Omega)$. In order to handle the lower order term, notice that for every measurable $\omega\subset\Omega$ it follows that
\begin{equation}\label{lowlim}
 \int_\omega|\un|^{p-1}|E_n|\le \int_0^{|\omega|} v^{p-1}(t)t^{-\frac{p-1}{N}}dt\le C\|f\|_{\elle1}\int_0^{|\omega|} t^{-\frac{N-p}{N}-\frac{p-1}{N}} \le C |\omega|^{\frac{1}{N}},
\end{equation}
where we used Lemma \ref{dupap}. Estimate \eqref{lowlim} implies that the sequence
\[
\left\{\frac{|\un|^{p-2}\un}{1+\frac1n|\un|^{p-1}}E_n(x)\right\}
\] 
is equi-integrable. This, together with the $a.e.$ convergence of $\un$, allows us to pass to the limit, as $n\to\infty$, also in the lower order term of \eqref{apprp} and conclude that  
\[
\io a(x,\nabla u)\nabla\phi=\io u  |u|^{p-2} \ E(x)\nabla\phi+\io f(x)\phi \ \ \ \forall \ \phi\in C^1_0(\Omega).
\]

Finally from \eqref{aecon} and Proposition \ref{29-10} we easily infer that
\[
|u|^{p-1}\in L^{\frac{N}{N-p},\infi}(\Omega) \ \ \ \mbox{and} \ \ \  |\nabla u|^{p-1}\in L^{\frac{N}{N-1},\infi}(\Omega).
\]
\textbf{Case \emph{(ii)}.} If $p>2-\frac{1}{N}$ and $f\in\lorr{1}{q}$ with $0<q\le\infty$, we infer from Lemma \ref{dupap} that $\{\un\}$ and $\{|\nabla\un|\}$ are bounded in $L^{\frac{(p-1)N}{N-p}, (p-1)q}(\Omega)$ and $L^{\frac{(p-1)N}{N-1},(p-1)q}(\Omega)$ respectively. Since $\frac{N(p-1)}{N-1}>1$ we deduce that there exist $u\in W^{1,r}_0(\Omega)$ such that $\un\rightharpoonup u$ in $W^{1,r}_0(\Omega)$ for any $1<r<\frac{N(p-1)}{N-1}$. Thus following the same arguments of the previous step, we conclude that there exists $u$ distributional solution of \eqref{pdrift} such that
\[
|u|\in \lor{\frac{(p-1)N}{N-p}}{(p-1)q} \ \ \ \mbox{and} \ \ \ |\nabla u|\in\lor{\frac{(p-1)N}{N-1}}{(p-1)q}.
\]
\textbf{Case \emph{(iii)}.} If $p=2-\frac{1}{N}$ and $f\in\lorr{1}{q}$ with $0<q\le\frac{1}{p-1}=\frac{N}{N-1}$, Lemma \ref{dupap} implies that $\{|\nabla \un|\}$ is bounded in $\elle1$. Since $\elle1$ is not reflexive, this is not enough to assure the existence of a weakly converging subsequence. In order to recover a compactness property for $\{|\nabla \un|\}$, we need to prove its equi-integrability. For it, let $\omega$ be a measurable subset of $\Omega$ and notice that
\begin{equation}\label{equint}
\int_{\omega}|\nabla \un(x)|dx\le \int_0^{|\omega|}\overline{|\nabla\un|}(t)dt\le  \int_0^{|\omega|}t\left(\frac{1}{t}\int_0^s\overline{|\nabla \un|}^{p-1}\right)^{\frac{N}{N-1}}\frac{dt}{t}
\end{equation}
\[
\le\int_0^{|\omega|}t\left(\frac{1}{t}\int_0^t\big(v(s)^{p-1}s^{-\frac{p-1}{N}}+\tilde{f}s^{\frac{1}{N}}\big)ds+\left(\frac{1}{t}\int_t^{|\Omega|}\big(v(s)^{p}s^{-\frac pN}+\tilde{f}^{p'}s^{\frac{p'}{N}}\big)ds\right)^{\frac{1}{p'}}\right)^{\frac{N}{N-1}}\frac{dt}{t}
\]
where the last inequality comes from \eqref{10:01}. Lemma \ref{dupap} with $f\in\lorr{1}{\frac{N}{N-1}}$ implies that
\[
 \int_0^{|\Omega|}t\left(\frac{1}{t}\int_0^t\big(v(s)^{p-1}s^{-\frac{p-1}{N}}+\tilde{f}s^{\frac{1}{N}}\big)ds+\left(\frac{1}{t}\int_t^{|\Omega|}\big(v(s)^{p}s^{-\frac pN}+\tilde{f}^{p'}s^{\frac{p'}{N}}\big)ds\right)^{\frac{1}{p'}}\right)^{\frac{N}{N-1}}\frac{dt}{t}
\]
\[
\le C \int_0^{|\Omega|}\left(t\tilde{f}(t)\right)^{\frac{N}{N-1}}\frac{dt}{t}=C\|f\|_{\lorr{1}{\frac{N}{N-1}}}.
\]
This means that the function
\[
\left(\frac{1}{t}\int_0^t\big(v(s)^{p-1}s^{-\frac{p-1}{N}}+\tilde{f}s^{\frac{1}{N}}\big)ds+\left(\frac{1}{t}\int_t^{|\Omega|}\big(v(s)^{p}s^{-\frac pN}+\tilde{f}^{p'}s^{\frac{p'}{N}}\big)ds\right)^{\frac{1}{p'}}\right)^{\frac{N}{N-1}}
\]
belongs to $L^1(0,|\Omega|)$. This consideration and inequality \eqref{equint} imply that for every $\epsilon$ there exists $\delta>0$ such that
\[
\int_{\omega}|\nabla \un(x)|dx\le \epsilon \ \ \ \forall \ \omega\subset\Omega \ \ \ \mbox{with} \ \ \ |\omega|<\delta.
\] 
Hence we take advantage of  Dunford-Pettis Theorem to infer the existence of a vector field $L\in\left(\elle1\right)^N$ such that
\[
\nabla\un\rightharpoonup L \ \ \ \mbox{in} \  \left(\elle1\right)^N.
\]
By the very definition of weak gradient of a Sobolev function it results that
\begin{equation}\label{luth}
\io\nabla\un F=-\io \un \mbox{div}(F) \ \ \ \forall  \ F\in \left(C^{\infi}_0(\Omega)\right)^{N}.
\end{equation}
Thanks to the weak convergence of $\nabla\un$ in $\left(\elle1\right)^N$ and the strong convergence of $\un$ in $\elle1$ (Lemma \ref{dupap} says that indeed $\un$ strongly converge to $u$ in $\elle{r}$ with $1<r<\frac{N}{N-1}$), we can pass to the limit in the equation above and deduce that $F\equiv\nabla u$.\\
At this point, thanks to the almost convergence of $\nabla\un$ to $\nabla u$ (see Lemma \ref{22-3}), we can infer that indeed
\[
\nabla\un \to \nabla u \  \ \ \mbox{in} \ \ \ \left(\elle 1\right)^N.
\]
Since $p-1=1-\frac{1}{N}<1$, we also have that $|\nabla\un|^{p-2}\nabla\un\to |\nabla u|^{p-2}\nabla u$ in $\elle1$. We follow the arguments of the previous step to conclude that $u$ is a solution of \eqref{pdrift}.
Moreover, thanks to the almost everywhere of both $\{\un\}$ and $\{|\nabla \un|\}$, we apply again Proposition \ref{29-10} to conclude that
\[
|u|\in\lor{\frac{N-1}{N-p}}{(p-1)q} \ \ \ \mbox{and} \ \ \ |\nabla u|\in\lor{1}{(p-1)q}.
\]

\textbf{Case (iv).} The case $p<2-\frac{1}{N}$ and $f\in\lor{m}{q}$ with $m=\frac{N}{N(p-1)+1}$ and $0< q\le\frac{1}{p-1}$ is handled similarly to the Case \emph{(iii)}. Indeed, for the considered values of $m$, it results $\frac{(p-1)Nm}{N-pm}=1$, thus Lemma \ref{dupap} implies that $\{|\nabla\un|\}$ is bounded in $\elle1$. Reasoning as in \eqref{equint}, \eqref{luth} and using the almost everywhere convergence of the gradient (see Lemma \ref{22-3}), we conclude that
\[
\nabla\un\to\nabla u \ \ \ \mbox{in}\ \ \ \left(\elle1\right)^N.
\]
From now on the proof is close to the one of the previous case.
\end{proof}
\begin{proof}[Proof of Theorem \ref{teohighp}.]
\textbf{Case \emph{(i)}.} Following the same argument of the first step of the proof of Theorem \ref{teolowp}. We infer that there exists $u\in W^{1,r}_0(\Omega)$ with $1<r<\frac{Nm(p-1)}{N-m}$ such that up to a subsequence
\[
\nabla\un\to \nabla u \ \ \ \mbox{in} \ \ \ L^{\frac{r}{p-1}}(\Omega).
\] 
Since it is possible to chose $r$ such that $\frac{r}{p-1}>1$, we deduce that
\[
|\nabla \un|^{p-2}\nabla \un  \to |\nabla u|^{p-2}\nabla u  \ \ \ \mbox{in} \ \ \ \elle1.
\] 
In order to pass to the limit in \eqref{apprp}, it is enough to notice that \eqref{aecon} and \eqref{lowlim} are still valid. We also have that
\[
|u|^{p-1}\in L^{\frac{N}{N-p},\infi}(\Omega) \ \ \ \mbox{and} \ \ \  |\nabla u|^{p-1}\in L^{\frac{N}{N-1},\infi}(\Omega).
\]
\textbf{Case (ii).} Choosing $\un$ as a test function in \eqref{apprp} and Using H\"older's inequality we get
\[
\alpha\io|\nabla\un|^p\le \left(\io |E|^{p'}|u|^p\right)^\frac{1}{p'}\left(\io|\nabla\un|^p\right)^\frac{1}{p}+\frac{1}{\mathcal{S}}\|f\|_{\elle{(p^*)'}}\left(\io|\nabla\un|^p\right)^\frac{1}{p}\] 
Moreover thanks to \eqref{stop} it results that $\{\un\}$ is bounded in $\elle {q}$ for $p^*<q<[(p-1)m^{*}]^*$. Thus
\[
\int_0^{|\Omega|}t^{-\frac{p}{N}}\overline{v}^p(t)dt\le\left(\int_0^{|\Omega|}\overline{v}^q(t)dt\right)^{\frac{p}{q}}\left(\int_0^{|\Omega|}t^{-\frac{pq}{N(q-p)}}\right)^{\frac{q-p}{q}}\le C
\]
since $1-\frac{pq}{N(q-p)}>0$. Hence
\[
\|\nabla\un\|_{\elle 2}\le\|E\|_{\emme N} \left(\int_0^{|\Omega|}t^{-\frac{p}{N}}\overline{v}^p(t)dt\right)^\frac{1}{p'}+\frac{1}{\mathcal{S}}\|f\|_{\elle{(p^*)'}}\le C
\]
At this point we conclude that up to a subsequence $\{\nabla\un\}$ weakly converge in $W^{1,p}_0(\Omega)$ to a function $u\in W^{1,p}_0(\Omega)$. The rest of the proof is the same of \emph{Case (i)}.
\end{proof}
\subsection{Drift term}

In the next Lemmas we recover the pointwise estimate for the the rearrangement of $\wn$, the solution of \eqref{!!appr1}, and its gradient. 
\begin{lemma}\label{sgo} Let us assume \eqref{ll5} and \eqref{!!marbis}. The sequence $\{\wn\}$ of solution of \eqref{!!appr1} satisfies the following estimates:
\begin{equation}\label{05-10}
\overline{w}_n(\tau)\le z(t):= C\int_{\tau}^{|\Omega|}t^{p'\left(\frac{1}{N}-1\right)+\frac{B}{\alpha\sigma_N(p-1)}}\left(\int_0^{t} \overline{f}(s)s^{-\frac{B}{\alpha\sigma_{N}}}ds\right)^{\frac{1}{p-1}}dt
\end{equation}
and
\begin{multline}\label{05-10bis}
\frac{1}{s}\int_0^s\overline{|\nabla\wn|}^{p-1}\le C_1\left[\frac{1}{s}\int_0^s t^{\frac{1}{N}-1+\frac{B}{\alpha\sigma_{N}}} \left(\int_0^{t} \overline{f}(\tau)\tau^{-\frac{B}{\alpha\sigma_{N}}}d\tau\right)dt\right.\\ \left.+\left(\frac{1}{s}\int_{s}^{|\Omega|}t^{p'\left(\frac{1}{N}-1+\frac{B}{\alpha\sigma_{N}}\right)} \left(\int_0^{t} \overline{f}(\tau)\tau^{-\frac{B}{\alpha\sigma_{N}}}d\tau\right)^{p'}dt\right)^{\frac{1}{p'}}\right],
\end{multline}
where $C$ and $C_1$ are two constant depending on $N,\alpha,p,E,m$.
\end{lemma}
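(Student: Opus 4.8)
The strategy is to mimic the argument used for the convection term in Lemmas \ref{regup} and \ref{regdup}, but with the appropriate modifications forced by the drift structure of \eqref{!!appr1}. First I would take $\frac{T_h(G_k(\wn))}{h}$ as test function in \eqref{!!appr1}, exactly as in the proof of Lemma \ref{regup}. Using \eqref{ll5}, Young's inequality, and letting $h\to 0$, this yields a differential inequality of the form
\[
-\frac{d}{dk}\int_{\{|\wn|>k\}}|\nabla\wn|^p\le \frac{1}{\alpha}\int_{\{|\wn|>k\}}|f| + \frac{1}{\alpha}\left(-\frac{d}{dk}\int_{\{|\wn|>k\}}|\nabla\wn|^p\right)^{\frac{1}{p'}}\left(-\frac{d}{dk}\int_{\{|\wn|>k\}}|E|^{p'}\right)^{\frac{1}{p'}}.
\]
Wait — here is the structural difference from the convection case: the drift term is $E(x)\,|\nabla\wn|^{p-2}\nabla\wn\,\phi$, so when we test with $T_h(G_k(\wn))/h$ the factor $(k+h)^{p-1}$ coming from the truncation level in \eqref{deltahp} does \emph{not} appear; instead we pick up the term $\int_{\{|\wn|>k\}}|E||\nabla\wn|^{p-1}$ directly. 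After applying the Hölder/Young splitting of $|E||\nabla\wn|^{p-1}$ and using the Pólya–Szegő–Talenti inequality \eqref{talenti} twice (first to insert $\sigma_N$ and then to produce the $A_n(k)^{(1/N-1)(p-1)}$ factor), together with Proposition \ref{09-10} to pass to rearrangement variables, one arrives at an inequality of the type
\[
-\overline{w}_n'(s)\le C\,s^{p'(\frac1N-1)}\left(\int_0^s\overline{f}\right)^{\frac{1}{p-1}}+\frac{\delta B}{\alpha\sigma_N}\,\frac{\overline{w}_n(s)}{s},
\]
where the crucial point is that the coefficient of the linear term is a \emph{constant} multiple of $1/s$ (coming from $\overline{\mathcal E}(s)\le B s^{-1/N}$ in \eqref{!!marbis} and the exponent bookkeeping), rather than involving the pseudo-rearrangement $\mathcal{D}_n$ as in \eqref{SB}; this is why \eqref{05-10} has an explicit power $s^{B/(\alpha\sigma_N(p-1))}$ instead of the exponential factor $R_n(s)$.

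Next I would integrate this linear ODE inequality. Multiplying by the integrating factor $s^{-B/(\alpha\sigma_N(p-1))}$ (equivalently writing $\frac{d}{ds}\big(s^{-\gamma}\overline{w}_n(s)\big)$ with $\gamma=\frac{B}{\alpha\sigma_N(p-1)}$ — here one must absorb the bounded part $\mathcal F\in(\elle\infty)^N$ into the constant, since it contributes only a bounded perturbation, exactly as in the estimate of $R_n$ in Lemma \ref{regup}), and integrating between $\tau$ and $|\Omega|$ using $\overline{w}_n(|\Omega|)=0$, yields \eqref{05-10}, with the datum appearing in the weighted form $\int_0^t\overline{f}(s)s^{-B/(\alpha\sigma_N)}ds$. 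The condition $B<\alpha\sigma_N\frac{m-1}{m}$ from \eqref{!!marbis} (recall $\sigma_N=N\omega_N^{1/N}$, so this matches $\|\mathcal E\|_{\emme N}\omega_N^{1/N}\le B<\alpha N\omega_N^{1/N}\frac{m-1}{m}$) is precisely what guarantees that the weight $s^{-B/(\alpha\sigma_N)}$ is integrable against $\overline f$ near $s=0$ and that the subsequent Lorentz estimates in Lemma \ref{dupap}-style will close; I would flag this integrability as the place where the smallness hypothesis is genuinely used.

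For the gradient estimate \eqref{05-10bis} I would follow the template of Lemma \ref{regdup} verbatim in spirit: using Lemma \ref{16-8} and Remark \ref{okkio}, split
\[
\int_0^s\overline{|\nabla\wn|}^{p-1}=\int_{\widetilde\Omega_s\cap\{|\wn|>\overline{w}_n(s)\}}|\nabla\wn|^{p-1}+\int_{\widetilde\Omega_s\cap\{|\wn|\le\overline{w}_n(s)\}}|\nabla\wn|^{p-1}=:I_1(s)+\text{(Hölder)}\cdot I_2(s)^{1/p'}s^{1/p},
\]
bound $I_2$ by integrating the analogue of \eqref{composition} (the chain rule in $k=\overline{w}_n(s)$ applied to the differential inequality from the first step) between $s$ and $|\Omega|$, and bound $\frac{d}{ds}I_1$ by the same Hölder-plus-$|A_n'|\le 1$ device used in Lemma \ref{regdup}. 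Substituting the pointwise bound \eqref{05-10} for $\overline{w}_n$ and using $\overline{\mathcal E}(s)\le Bs^{-1/N}$ to evaluate the pseudo-rearrangement contributions (which now collapse to explicit powers of $t$, again by \eqref{!!marbis}), one obtains \eqref{05-10bis}. The main obstacle I anticipate is \emph{not} any single estimate but rather the careful exponent bookkeeping: one must verify that absorbing $\mathcal F$ into the constant is legitimate (it is, since $\int_t^s D_{\mathcal F,n}(\tau)^{1/p}\tau^{1/N-1}d\tau$ is bounded by a constant as in Lemma \ref{regup}), and that the linear-term coefficient really is $\le \gamma/s$ with $\gamma$ below the threshold — the borderline analysis in Remark \ref{nirvana} shows there is no slack here, so the inequalities must be tracked with the exact constants $\alpha,\sigma_N,p$ rather than generic $C$'s until the very last step.
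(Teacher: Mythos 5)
There is a genuine gap, and it sits exactly at the point you gloss over. You correctly observe that testing with $T_h(G_k(\wn))/h$ produces the drift contribution $\int_{\{|\wn|>k\}}|E||\nabla\wn|^{p-1}$, an integral over the whole superlevel set rather than over the shell $\{k<|\wn|<k+h\}$. But you then assert that ``H\"older/Young splitting'' plus \eqref{talenti} and Proposition \ref{09-10} turns this into the local differential inequality
\[
-\overline{w}_n'(s)\le C\,s^{p'(\frac1N-1)}\Bigl(\int_0^s\overline{f}\Bigr)^{\frac{1}{p-1}}+\frac{\delta B}{\alpha\sigma_N}\,\frac{\overline{w}_n(s)}{s},
\]
i.e.\ the convection-type structure \eqref{SB} with $\overline{w}_n(s)$ itself on the right. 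This step is unjustified and, as a reduction, wrong: the drift term controls $\int_{\{|\wn|>k\}}|E||\nabla\wn|^{p-1}$, a gradient quantity on the superlevel set, which cannot be expressed through $\overline{w}_n$ at the single level $s=A_n(k)$ (in the convection case the factor $k^{p-1}\simeq\overline{u}_n(s)^{p-1}$ appears because the lower order term carries $|u_n|^{p-2}u_n$; here no such factor exists). The paper's proof keeps the nonlocal structure: it writes the drift term as $\int_k^{+\infty}\frac{d}{ds}\bigl(\int_{\{|\wn|>s\}}|E_n||\nabla\wn|^{p-1}\bigr)ds$, applies H\"older to the $s$-derivative to bring in the pseudo-rearrangements $Q_{1,n},Q_{2,n}$ and the unknown $\bigl(-\frac{d}{ds}\int_{\{|\wn|>s\}}|\nabla\wn|^p\bigr)^{1/p'}$, and then resolves the resulting Volterra-type integral inequality with the Gronwall Lemma \ref{gron}; an integration by parts then moves the exponential factor onto $\overline f$, and only at that stage is $\overline{\mathcal E}(s)\le Bs^{-1/N}$ used to convert the exponential into the powers $s^{\pm B/(\alpha\sigma_N)}$. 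None of this machinery (in particular Lemma \ref{gron}) appears in your plan, and your claim that the pseudo-rearrangements ``do not appear'' in the drift case is the opposite of what happens.

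A symptom of the problem is that even if your local ODE were granted, integrating it with the integrating factor $s^{\gamma}$ would give an estimate of the convection type \eqref{10-03p}, namely $\overline{w}_n(\tau)\le C\tau^{-\gamma}\int_\tau^{|\Omega|}s^{\gamma+p'(\frac1N-1)}\bigl(\int_0^s\overline f\bigr)^{1/(p-1)}ds$, not the asserted \eqref{05-10}, whose characteristic feature is the weight $s^{-B/(\alpha\sigma_N)}$ \emph{inside} the integral against $\overline f$; that weighted form is produced precisely by the Gronwall step followed by the integration by parts. Your Step 2 (splitting $\int_0^s\overline{|\nabla\wn|}^{p-1}$ into $I_1+I_2^{1/p'}s^{1/p}$ via Lemma \ref{16-8}, bounding $I_2$ by integrating the analogue of \eqref{composition} and $I_1'$ via H\"older and $|A_n'(\overline w_n(s))|\le1$) does match the paper's argument, but it takes as input the pointwise bound on $-\frac{d}{dk}\int_{\{|\wn|>k\}}|\nabla\wn|^p$ coming from \eqref{nirv}, i.e.\ exactly the output of the Gronwall argument you have skipped; so as written the gradient estimate also rests on the missing step.
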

\begin{proof} 
Let us divide the prof in two steps.\\
\textbf{Step 1.} Estimate for $\wn$.\\
\textbf{Step 2.} Estimate for $\nabla \wn$.\\
\textbf{Step 1.} Let us set for any $n\in\mathbb{N}$, $k>0$ and $s\in(0,|\Omega|)$ the distribution function of $\wn$
\[
A_n(k)=|\{|\wn|>k\}|,
\]
and the pseudo rearrangements of the two components of $E(x)$ (see \eqref{!!marbis} and \eqref{night2})
\[
Q_{1,n}(s):=\frac{d}{ds}\int_{\Omega_n(s)}|\mathcal{F}(x)|^pdx \ \ \ \mbox{and} \ \ \ Q_{2,n}(s):=\frac{d}{ds}\int_{\Omega_n(s)}|\mathcal{E}(x)|^pdx.
\]
As in Lemma \ref{regup}, let us take $\frac{T_h(\Gk(\un))}{h}$ with $h>0$ and $k\ge0$ as test function in \eqref{!!appr1}. We obtain that
\begin{equation}\label{samsara}
\frac{\alpha}{h}\int_{\{k<|\wn|<k+h\}}|\nabla\wn|^p\le \int_{\{|w_n|>k\}}|f|+\int_{\{|w_n|>k\}}|E||\nabla \wn|^{p-1}.
\end{equation}
Recalling \eqref{24-5}, let us note that the last integral above can be estimate as
\[
\ioo{k}|E_n(x)||\nabla\wn|^{p-1}=\int_k^{+\infty}\left(\frac{d}{ds}\ioo{s}|E_n(x)||\nabla\wn|^{p-1}\right)ds
\]
\[
\le\int_k^{+\infty}\big(Q_{1,n}(A_n(s))^{\frac{1}{p}}+Q_{2,n}(A_n(s))^{\frac{1}{p}}\big)(-A_n'(s))^{\frac1p}\left(-\frac{d}{ds}\ioo{s} |\nabla \wn|^p\right)^{\frac{1}{p'}}ds,
\]
Passing to the limit as $h\to 0$ in \eqref{samsara}, we recover that
\[
-\frac{d}{dk}\ioo{k}|\nabla \wn|^p\le\frac{1}{\alpha}\ioo{k}|f|\]\[+ \frac{1}{\alpha}\int_k^{+\infty}\mathcal{Q}_n(A_n(s))(-A'_n(s))^{\frac1p}\left(-\frac{d}{ds}\ioo{s} |\nabla \wn|^p\right)^{\frac{1}{p'}}ds,
\]
where we have set $\mathcal{Q}_n(s)=Q_{1,n}(s)^{\frac{1}{p}}+Q_{2,n}(s)^{\frac{1}{p}}$ to have a more compact notation.
Using \eqref{talenti} we obtain
\[
\left(-\frac{d}{dk}\ioo{k}|\nabla \wn|^p\right)^{\frac{1}{p'}}\le\frac{1}{\alpha\sigma_N}A_n(k)^{\frac{1}{N}-1}(-A'_n(k))^{\frac{1}{p'}}\ioo{k}|f|\]
\[+\frac{1}{\alpha\sigma_N}A_n(k)^{\frac{1}{N}-1}(-A'_n(k))^{\frac{1}{p'}} \int_k^{+\infty}\mathcal{Q}_n(A_n(s))(-A'_n(s))^{\frac{1}{p}}\left(-\frac{d}{ds}\ioo{s} |\nabla \wn|^p\right)^{\frac{1}{p'}}ds.
\] 

Let us use Lemma \ref{gron} and make a change of variable to obtain that
\[
\left(-\frac{d}{dk}\ioo{k}|\nabla \wn|^p\right)^{\frac{1}{p'}}\le\frac{1}{\sigma_N}A_n(k)^{\frac{1}{N}-1}(-A'_n(k))^{\frac{1}{p'}}\int_0^{A_n(k)}\overline{f}\]
\[+\frac{1}{\alpha^2\sigma_N^2}A_n(k)^{\frac{1}{N}-1}(-A'_n(k))^{\frac{1}{p'}} \int_0^{A_n(k)}\mathcal{Q}_n(s)s^{\frac{1}{N}-1}\left(\int_0^s\overline{f}(\tau)d\tau\right)e^{\frac{1}{\alpha\sigma_N}\int_s^{A_n(k)}\mathcal{Q}_n(\tau)\tau^{\frac{1}{N}-1}d\tau}ds.
\]
We note that the integral in the second line above can be written as
\[
-\alpha\sigma_N\int_0^{A_n(k)}\left(\int_0^s\overline{f}(\tau)d\tau\right)\frac{d}{ds}\left(e^{\frac{1}{\alpha\sigma_N}\int_s^{A_n(k)}\mathcal{Q}_n(\tau)\tau^{\frac{1}{N}-1}d\tau}\right)ds.
\]
Thus integrating by parts we finally obtain
\begin{multline}\label{nirv}
\left(-\frac{d}{dk}\ioo{k}|\nabla\wn|^p\right)^{\frac{1}{p'}}\le\\ \frac{1}{\alpha\sigma_N}A_n(k)^{\frac{1}{N}-1}(-A'_n(k))^{\frac{1}{p'}} \int_0^{A_n(k)} \overline{f}(s)e^{\frac{1}{\alpha\sigma_N}\int_s^{A_n(k)}\left(Q_{1,n}(\tau)^{\frac{1}{p}}+Q_{2,n}(\tau)^{\frac{1}{p}}\right)\tau^{\frac{1}{N}-1}d\tau}ds.
\end{multline}
Using once more \eqref{talenti}, estimate \eqref{nirv} becomes
\[
1\le\frac{1}{\alpha\sigma_N^{p}}A_n(k)^{p\left(\frac{1}{N}-1\right)}(-A'_n(k))^{p-1} \int_0^{A_n(k)} \overline{f}(s)e^{\frac{1}{\alpha\sigma_N}\int_s^{A_n(k)}\left(Q_{1,n}(\tau)^{\frac{1}{p}}+Q_{2,n}(\tau)^{\frac{1}{p}}\right)\tau^{\frac{1}{N}-1}d\tau}ds,
\]
and by a change of variable
\begin{equation}\label{talassa}
-\frac{d}{dt}\overline{w}_n(t)\le \frac{1}{\alpha^{\frac{1}{p-1}}\sigma_N^{p'}}t^{p'\left(\frac{1}{N}-1\right)}\left(\int_0^{t} \overline{f}(s)e^{\frac{1}{\alpha\sigma_N}\int_s^{t}\left(Q_{1,n}(\tau)^{\frac{1}{p}}+Q_{2,n}(\tau)^{\frac{1}{p}}\right)\tau^{\frac{1}{N}-1}d\tau}ds\right)^{\frac{1}{p-1}}.
\end{equation}
By construction and by Lemma \ref{inl} we deduce that $\|Q_{1,n}\|_{\elle{\infty}}\le C \|\mathcal{F}\|_{\elle{\infty}}^p$ and moreover, by means of Young Inequality and integration by parts, we have that
\[
\int_s^{t}Q_{2,n}(\tau)^{\frac1p}\tau^{\frac{1}{N}-1}d\tau \le  \frac{NB}{p(N-p)}+B\log\left(\frac{t}{s}\right).
\]
Thus integrating \eqref{talassa} we recover \eqref{05-10}.\\
\textbf{Step 2.} Recalling Lemma \ref{16-8} and Remark \ref{okkio}, we obtain that
\[
\int_0^s\overline{|\nabla  \wn|}^{p-1}d\tau =\int_{\widetilde{\Omega}_n(s)}|\nabla  \wn|^{p-1}dx\]
\[= \int_{\widetilde{\Omega}_n(s)\cap\{| \wn|>\overline{w}_n(s)\}}|\nabla \wn|^{p-1}dx+\int_{\widetilde{\Omega}_n(s)\cap\{| \wn|\le\overline{w}_n(s)\}}|\nabla  \wn|^{p-1}dx\]
\[
\le\int_{\{| \wn|>\overline{w}_n(s)\}}|\nabla  \wn|^{p-1}dx+
 \left(\int_{\{| \wn|\le\overline{w}_n(s)\}}|\nabla  \wn|^{p}dx\right)^{\frac{1}{p'}}|\widetilde{\Omega}_n(s)|^{\frac{1}{p}}\le I_1(s)+I_2^{\frac{1}{p'}}(s)s^{\frac{1}{p}}.
\]
\textbf{Estimate of $I_2$.} From \eqref{nirv} we also have
\[
-\frac{d}{dk}\ioo{k}|\nabla \wn|^p\le C A_n(k)^{p'\left(\frac{1}{N}-1+\frac{B}{\alpha\sigma_N}\right)} \left(\int_0^{A_n(k)}t^{-\frac{B}{\alpha\sigma_N}} \overline{f}(t)dt\right)^{p'}(-A'_n(k)),
\]
from which we infer that (see \eqref{composition})
\[
\frac{d}{ds}\int_{\{| \wn|>\overline{w}_n(s)\}}|\nabla  \wn|^{p}\le C s^{p'\left(\frac{1}{N}-1+\frac{B}{\alpha\sigma_N}\right)}\left(\int_0^{s}t^{-\frac{B}{\alpha\sigma_N}} \overline{f}(t)dt\right)^{p'}.
\]
Integrating between $\tau$ and $\Omega$ we get
\[
I_2\le C \int_\tau^{|\Omega|}s^{p'\left(\frac{1}{N}-1+\frac{B}{\alpha\sigma_N}\right)}\left(\int_0^{s}t^{-\frac{B}{\alpha\sigma_N}} \overline{f}(t)dt\right)^{p'}ds.
\]
\textbf{Estimate of $I_1(s)$.} 
As far as $I_1$ is concerned, recalling \eqref{06/10}, it follows  
\[
\frac{d}{ds}\int_{\{|\wn|>\overline{u}_n(s)\}}|\nabla \wn|^{p-1}\le \left(\frac{d}{ds}\int_{\{|\wn|>\overline{u}_n(s)\}}|\nabla \wn|^p\right)^{\frac{1}{p'}}\le  C s^{\frac{1}{N}-1+\frac{B}{\alpha\sigma_N}}\int_0^{s}t^{-\frac{B}{\alpha\sigma_N}} \overline{f}(t)dt.
\]

Integrating between $0$ and $\tau$ we get
\[
I_1=\ioo{\overline{w}_n(\tau)}|\nabla\wn|^{p-1}\le C \int_0^{\tau}s^{\frac{1}{N}-1+\gamma}\int_0^{s}t^{-\gamma} \overline{f}(t)dtds.
\]
Putting together these two pieces of information we obtain \eqref{05-10bis}.
\end{proof}
Let us provide now the a priori bound for $\{\wn\}$ and $\{|\nabla\wn|\}$ in the required Lorentz spaces.
\begin{lemma}\label{!!dupa1}
There exist two constant $C=C(\alpha,p,E,N)$ and $\tilde{C}=\tilde{C}(\alpha,p,E,N)$ such that
\[
\mbox{if} \ \ \ 1<m<\frac{N}{p}, \ \ \ 0<q\le\infty \ \ \ \mbox{then} \ \ \ \|\wn\|_{\lor{[(p-1)m^{*}]^*}{(p-1)q}}\le C \|f\|_{\lor{m}{q}}\]
and
\[ if \ \ \  1<m<\left(p^*\right)',\ \ \ 0<q\le\infty   \ \ \ \mbox{then} \ \ \  \|\nabla \wn\|_{\lor{(p-1)m^{*}}{(p-1)q}}\le\tilde{C}\|f\|_{\lor{m}{q}}.
\]
\end{lemma}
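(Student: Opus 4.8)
The plan is to run, for $\{\wn\}$, the same machinery Lemma~\ref{dupap} ran for $\{\un\}$, but fed with the pointwise rearrangement estimates \eqref{05-10} and \eqref{05-10bis} of Lemma~\ref{sgo} in place of \eqref{10-03p} and \eqref{gradexp}. Write $\gamma:=\frac{B}{\alpha\sigma_N}$; since $\sigma_N=N\omega_N^{1/N}$, assumption \eqref{!!marbis} is exactly the scalar inequality $\gamma<\frac{m-1}{m}$. Using the elementary homogeneity $\|v\|_{\lor{a}{(p-1)q}}^{(p-1)q}=\int_0^{\infty}\tau^{\frac{(p-1)q}{a}}\bigl(\overline v(\tau)^{p-1}\bigr)^{q}\frac{d\tau}{\tau}$ together with $[(p-1)m^{*}]^{*}=\frac{(p-1)Nm}{N-pm}$, the two claims are equivalent to
\[
\bigl\|\,|\wn|^{p-1}\,\bigr\|_{L^{\frac{Nm}{N-pm},q}(\Omega)}\le C\|f\|_{\lor mq}\qquad\text{and}\qquad\bigl\|\,|\nabla\wn|^{p-1}\,\bigr\|_{L^{m^{*},q}(\Omega)}\le C\|f\|_{\lor mq}.
\]
I would dispose of $q=\infty$ immediately: plugging $\overline f(t)\le\|f\|_{\emme m}t^{-1/m}$ into \eqref{05-10} and \eqref{05-10bis} and evaluating the resulting power integrals gives $\overline{w}_n(\tau)\le C\|f\|_{\emme m}\tau^{-\frac{N-pm}{(p-1)Nm}}$ and a matching bound for $\frac1s\int_0^s\overline{|\nabla\wn|}^{\,p-1}$, i.e.\ membership in the correct Marcinkiewicz spaces, only $\gamma+\frac1m<1$ (the convergence of the inner integral) and $m<\frac Np$, resp.\ $m<(p^*)'$, being used to keep the $\tau$-exponents on the right side of the borderline. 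So assume $0<q<\infty$.

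\emph{Estimate for $\wn$.} Insert \eqref{05-10} into $\int_0^{\infty}\tau^{\frac{q(N-pm)}{Nm}}\overline{w}_n(\tau)^{(p-1)q}\frac{d\tau}{\tau}$ and apply Lemma~\ref{trickint} twice. First, to the outer (tail) integral $\int_\tau^{|\Omega|}t^{\,p'(\frac1N-1)+\frac{\gamma}{p-1}}\bigl(\int_0^t\overline f(s)s^{-\gamma}ds\bigr)^{\frac1{p-1}}dt$, read as $R_\delta(\tau)$ in the $\delta>1$ branch (extending $\overline f$ by zero past $|\Omega|$): taking $\lambda=(p-1)q$ forces $\delta=1+\frac{N-pm}{(p-1)Nm}$, which is $>1$ precisely because $m<\frac Np$, and this step reduces the quantity to a constant multiple of $\int_0^{\infty}t^{\,q(\gamma-1+\frac1m)}\bigl(\int_0^t\overline f(s)s^{-\gamma}ds\bigr)^{q}\frac{dt}{t}$. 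Second, to the remaining inner (local) integral $\int_0^t\overline f(s)s^{-\gamma}ds$, read as $R_{\delta'}(t)$ in the $\delta'<1$ branch with $r(s):=\overline f(s)s^{-\gamma}$ --- which is admissible because, $\gamma$ being $\ge0$, $\overline f(s)s^{-\gamma}$ is still non-increasing: matching exponents pins $\delta'=\frac1m+\gamma$, so $\delta'<1$ is \emph{exactly} \eqref{!!marbis}, and the result is a constant multiple of $\int_0^{\infty}\overline f(t)^{q}t^{q/m}\frac{dt}{t}=\|f\|_{\lor mq}^{q}$. This is the node at which the smallness of $B$ is not a convenience but is structurally forced.

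\emph{Estimate for $\nabla\wn$.} Since $p-1>0$, $\overline{|\nabla\wn|}^{\,p-1}$ is non-increasing, hence equal to the decreasing rearrangement of $|\nabla\wn|^{p-1}$, and $\frac1s\int_0^s\overline{|\nabla\wn|}^{\,p-1}$ is precisely its maximal function; so by the first inequality in \eqref{equivalent}, $\bigl\|\,|\nabla\wn|^{p-1}\,\bigr\|_{L^{m^{*},q}(\Omega)}\le\bigl\lceil|\nabla\wn|^{p-1}\bigr\rceil_{L^{(m^{*},q)}(\Omega)}$, and it suffices to control the $s^{q/m^{*}}$-weighted $L^{q}(ds/s)$ norm of the right-hand side of \eqref{05-10bis}. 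That right-hand side is a local-average term $\frac1s\int_0^s t^{\frac1N-1+\gamma}\bigl(\int_0^t\overline f(\tau)\tau^{-\gamma}d\tau\bigr)dt$ plus a tail-average term $\bigl(\frac1s\int_s^{|\Omega|}t^{\,p'(\frac1N-1+\gamma)}\bigl(\int_0^t\overline f(\tau)\tau^{-\gamma}d\tau\bigr)^{p'}dt\bigr)^{1/p'}$, and I would handle each exactly as the terms of \eqref{10:01} were handled in the proof of Lemma~\ref{dupap}. Bounding the increasing inner integral by its value at $t=s$ turns the local-average term into $Cs^{\frac1N+\gamma-1}\int_0^s\overline f(\tau)\tau^{-\gamma}d\tau$, which Lemma~\ref{trickint} in the $\delta<1$ branch ($\delta=\frac1{m^{*}}+\frac1N+\gamma=\frac1m+\gamma<1$, once more \eqref{!!marbis}) sends directly onto $C\|f\|_{\lor mq}^{q}$; Lemma~\ref{trickint} in the $\delta>1$ branch, with $\delta=\frac{p'}{m^{*}}$ --- so that $\delta>1$ is exactly the hypothesis $m<(p^*)'$, the only place it is used --- sends the tail-average term onto the same intermediate $\int_0^{\infty}t^{\,q(\gamma-1+\frac1m)}\bigl(\int_0^t\overline f(s)s^{-\gamma}ds\bigr)^{q}\frac{dt}{t}$ already met in the $\wn$ estimate, hence again onto $C\|f\|_{\lor mq}^{q}$.

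The main obstacle is the bookkeeping of these nested Hardy-type reductions: at each node one computes the exponent produced by matching powers and must verify it lies on the side of $1$ demanded by the relevant branch of Lemma~\ref{trickint}, and it is precisely there that the three standing constraints appear, one per layer --- $m<\frac Np$ from the outer reduction, $m<(p^*)'$ from the tail-average reduction in the gradient estimate, and the threshold \eqref{!!marbis}, i.e.\ $\gamma+\frac1m<1$, from the innermost reduction involving $\int_0^t\overline f(s)s^{-\gamma}ds$. A further delicate point, absent in the convection case of Lemma~\ref{dupap} (where the inner factors were products of decreasing functions), is that here the innermost integral $\int_0^t\overline f(s)s^{-\gamma}ds$ is \emph{increasing}, so applying Lemma~\ref{trickint} to the outer (tail) integrals requires first restoring the monotonicity that its hypothesis demands --- by an integration by parts splitting, or by passing to the maximal-function norm $\lceil\cdot\rceil$ throughout --- and it is in keeping $\overline f(s)s^{-\gamma}$ (rather than $\overline f$ and a separate weight $s^{-\gamma}$) as the function on which the innermost inequality is run that $\gamma\ge0$ is used and $\gamma<\frac{m-1}{m}$ emerges as sharp.
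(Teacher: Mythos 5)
Your proposal is correct and follows essentially the same route as the paper: you insert the pointwise estimates \eqref{05-10} and \eqref{05-10bis} of Lemma \ref{sgo} into the weighted quasinorms and iterate the Hardy-type Lemma \ref{trickint} with exactly the same exponents ($\delta=1+\frac{N-pm}{(p-1)Nm}>1$, $\delta=\frac{p'}{m^*}>1$, and the innermost $\delta=\frac1m+\frac{B}{\alpha\sigma_N}<1$), with $m<\frac Np$, $m<(p^*)'$ and \eqref{!!marbis} entering at the same nodes as in the paper's proof. Your two small deviations --- bounding the increasing inner integral by its endpoint value in the local gradient term instead of a second application of Lemma \ref{trickint}, and your explicit attention to the monotonicity and sign hypotheses of Lemma \ref{trickint} (which the paper applies without comment) --- are harmless and, if anything, slightly more careful.
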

\begin{proof}
\emph{Estimate for $\{\wn\}$.} Assume tha $q>\infty$. From \eqref{05-10} it follows that
\[
\|\wn\|^q_{L^{\frac{(p-1)Nm}{N-pm},(p-1)q}(\Omega)}=\int_0^{+\infi}t^{\frac{q(N-pm)}{Nm}}\overline{w}_n(t)^{q}\frac{dt}{t}\]
\[\le C\int_0^{+\infi}\tau^{\frac{q(N-pm)}{Nm}+(p-1)q}\left(\frac{1}{\tau}\int_{\tau}^{|\Omega|}t^{p'\left(\frac{1}{N}-1\right)+\frac{B}{\alpha\sigma_N(p-1)}}\left(\int_0^{t} \overline{f}(s)s^{-\frac{B}{\alpha\sigma_{N}}}ds\right)^{\frac{1}{p-1}}dt\right)^{(p-1)q}\frac{d\tau}{\tau}\]
\[
\le C\left[\int_0^{+\infty}\tau^{\frac{q}{m}+\frac{qB}{\alpha\sigma_N}}\left(\tau^{-1}\int_0^{\tau}t^{-\frac{B}{\alpha\sigma_N}} \overline{f}(t)dt\right)^q\frac{d\tau}{\tau}\right]
 \le  C\int_0^{\infi}t^{\frac{q}{m}}\overline{f}^q\frac{dt}{t} ,
\]
where we used Lemma \ref{trickint} twice, once with $\delta=\frac{N-pm}{(p-1)Nm}+1>1$ and the second time with $\delta=\frac{1}{m}+\frac{B}{\alpha\sigma_N}<1$.
If $q=\infty$ directly from \eqref{05-10} we obtain that
\[
\overline{w}_n\le C \|f\|_{\lor{m}{\infty}}t^{-\frac{N-pm}{(p-1)Nm}}.
\]

\emph{Estimate for $\{|\nabla\wn|\}$.} Let us start with 
\[
\int_0^{\infty}\tau^{\frac{q}{m^*}}\left(\frac{1}{\tau}\int_0^{\tau}s^{\frac{1}{N}-1+\frac{B}{\alpha\sigma_N}}\int_0^{s}t^{-\frac{B}{\alpha\sigma_N}} \overline{f}(t)dtds\right)^q\frac{d\tau}{\tau} 
\]
\[
\le C\int_0^{\infty}\tau^{\frac{q}{m}+q\frac{B}{\alpha\sigma_N}}\left(\frac{1}{\tau}\int_0^{\tau}t^{-\frac{B}{\alpha\sigma_N}}\overline{f}(t)dt\right)^q\frac{d\tau}{\tau}
\]
\[
\le C\int_0^{\infty}\tau^{\frac{q}{m}}\overline{f}(\tau)^q\frac{d\tau}{\tau}.
\]
Moreover
\[
\int_0^{\infty}\tau^{\frac{q}{m^*}}\left(\frac{1}{\tau}\int_{\tau}^{|\Omega|}t^{p'\left(\frac{1}{N}-1+\frac{B}{\alpha\sigma_{N}}\right)} \left(\int_0^{t} \overline{f}(\tau)\tau^{-\frac{B}{\alpha\sigma_{N}}}d\tau\right)^{p'}dt\right)^{\frac{q}{p'}}\frac{d\tau}{\tau}
\]
\[
\le C \int_0^{\infty}\tau^{\frac{q}{m}+\frac{qB}{\alpha\sigma_N}}\left(\frac{1}{s}\int_0^{\tau}t^{-\frac{B}{\alpha\sigma_N}}\overline{f}(t)dt\right)^q\frac{d\tau}{\tau}\le C\int_0^{\infty}\tau^{\frac{q}{m}}\overline{f}^q\frac{d\tau}{\tau},
\]
where we used Lemma \ref{trickint} twice, once with $\delta=\frac{p'}{m^*}>1$ and the second time with $\delta=\frac{1}{m}+\frac{B}{\alpha\sigma_N}<1$.
Hence we have that
\[
\|\nabla\wn\|_{\lor{\frac{(p-1)Nm}{N-m}}{(p-1)q}}^q\le\int_0^{\infty}\tau^{\frac{q}{m^*}}\left(\frac{1}{s}\int_0^\tau\overline{|\nabla\wn|}^{p-1}(t)dt\right)^q\frac{d\tau}{\tau}\le\int_0^{\infty}\tau^{\frac{q}{m}}\overline{f}^q\frac{d\tau}{\tau}.
\]
If $q=\infty$ directly from \eqref{05-10bis} we obtain that
\[
\overline{|\nabla\wn|}\le C \|f\|_{\lor{m}{\infty}}t^{-\frac{N-m}{(p-1)Nm}}.
\]
\end{proof}

\begin{proof}[Proof of Theorem \ref{!!teohighp}.] \textbf{Case (i).} From Lemma \ref{!!dupa1} we infer the existence of a function $w\in W^{1,r}_0(\Omega)$ with $1<r<\frac{(p-1)Nm}{N-m}$ such that, up to a subsequence
\[
\wn\rightharpoonup w \ \ \ \mbox{in} \ \ \ W^{1,r}_0(\Omega).
\]
This weak converge and the almost everywhere convergence of $\nabla\wn$ proved in \eqref{aebis} allow us to conclude (see \eqref{9/2/2019}) that
\[
|\nabla \wn|^{p-2}\nabla \wn  \to |\nabla w|^{p-2}\nabla w  \ \ \ \mbox{in} \ \ \ \elle1.
\] 
In order to deal with the lower order term notice that for any subset $A\subset\Omega$ it results (recall that $m>1$)
\[
\int_{A}|\nabla\wn|^{p-1}|E_n(x)|\le \int_0^{|A|}\overline{|\nabla\wn|}(s)\overline{E}(s)ds\le C\int_0^{|A|}t^{-\frac{1}{m^*}-\frac{1}{N}}\le C |A|^{\frac{1}{m'}},
\]
that is the equi-integrability of the sequence 
\[
\left\{\frac{\nabla\wn\cdot E(x)}{1+\frac{1}{n}|\nabla\wn|}\right\}.
\]
This and the almost everywhere convergence of the gradients assured by Lemma \ref{aenodiv} allows us to conclude that the function $w$ satisfies \eqref{!!pdriftw}. Moreover thanks to Proposition \ref{29-10} it follows that
\[
\|w\|_{\lor{\frac{(p-1)Nm}{N-pm}}{(p-1)q}}+\|\nabla w\|_{\lor{\frac{(p-1)Nm}{N-m}}{(p-1)q}}\le C\|f\|_{\emme m}.
\]

\textbf{Case (ii).} Form Lemma \ref{!!dupa1} we know that $\{\wn\}$ is bounded in $\elle {q}$ for $p^*<q<[(p-1)m^*]^*$. Thus
\[
\io |E|^p|w|^p\le\|E\|_{\emme N}\int_0^{|\Omega|}t^{-\frac{N}{p}}\overline{w}_n^p(t)dt\le\]\[\|E\|_{\emme N}\left(\int_0^{|\Omega|}\overline{w}_n^q(t)dt\right)^{\frac{2}{q}}\left(\int_0^{|\Omega|}t^{-\frac{pq}{N(q-p)}}\right)^{\frac{q-p}{q}}\le C
\]
since $1-\frac{pq}{N(q-p)}>0$. Let us take now $\wn$ as a test function in \eqref{!!appr1}. Using H\"older's inequality we get
\[
\alpha\io|\nabla\wn|^p\le \left(\io |E|^p|w|^p\right)^\frac{1}{p}\left(\io|\nabla\wn|^p\right)^{\frac1{p'}}+\frac{1}{\mathcal{S}}\|f\|_{\elle{(p^*)'}}\left(\io|\nabla\wn|^p\right)^{\frac1{p}}\]\[\le C\left(\io|\nabla\wn|^p\right)^{\frac1{p'}}+\frac{1}{\mathcal{S}}\|f\|_{\elle{(p^*)'}}\left(\io|\nabla\wn|^p\right)^{\frac1{p}}.
\]
Hence up to a subsequence $\{\nabla\wn\}$ weakly converge in $W^{1,p}_0(\Omega)$ to a function $w\in W^{1,p}_0(\Omega)$. The rest of the proof is the same of \emph{Case (i)}.
\end{proof}

\end{document}